\renewcommand{\comment}[1]{}
\newcommand{\eq}{\begin{equation}}
\newcommand{\en}{\end{equation}}
\newcommand{\rr}{\mathbb{R}}
\newcommand{\NN}{\mathbb{N}}
\newcommand{\norm}[1]{\left\lVert #1 \right\rVert}
\newcommand{\iprod}[1]{\left\langle #1 \right\rangle }
\newcommand{\ol}[1]{\widetilde{#1}}
\newcommand{\Ar}[1]{\mathrm{Area}\left( #1  \right)}
\newcommand{\oD}{\overline{D}}
\newcommand{\uni}{\mathrm{Uni}}
\newcommand{\TV}{\mathrm{TV}}
\newcommand{\Prob}{\mathrm{P}}
\newcommand{\E}{\mathrm{E}}
\newcommand{\normU}[1]{\left\lVert #1 \right\rVert}
\newcommand{\skor}{\mathcal{D}^{(2m)}[0, \infty)}
\newcommand{\fil}{\mathcal{F}}
\newcommand{\rtoix}{\text{rank-to-index}}
\newcommand{\rtoshix}{\text{rank-to-shadow-index}}
\newcommand{\ixtoshix}{\text{index-to-shadow-index}}
\newcommand{\perm}{\mathcal{S}}
\newcommand{\fixed}{\mathbb{F}}
\newcommand{\sminx}{\mathfrak{i}}
\newcommand{\smjinx}{\mathfrak{j}}
\newcommand{\gands}{\text{gather-and-spread}}
\newcommand{\jdiffdist}{\mathcal{P}}
\newcommand{\diffdist}{\mathcal{R}}
\newcommand{\tmix}{t_{\mathrm{mix}}}
\begin{document}

\theoremstyle{plain}
\newtheorem{thm}{Theorem}
\newtheorem{lemma}[thm]{Lemma}
\newtheorem{prop}[thm]{Proposition}
\newtheorem{cor}[thm]{Corollary}

\theoremstyle{definition}
\newtheorem{defn}{Definition}
\newtheorem{asmp}{Assumption}
\newtheorem{notn}{Notation}
\newtheorem{prb}{Problem}

\theoremstyle{remark}
\newtheorem{rmk}{Remark}
\newtheorem{exm}{Example}
\newtheorem{clm}{Claim}

\title{Shuffling cards by spatial motion}

\author{Persi Diaconis}
\address{Departments of Mathematics and Statistics\\ Stanford University\\ Stanford, CA}
\email{diaconis@math.stanford.edu}

\author{Soumik Pal}
\address{Department of Mathematics\\ University of Washington\\ Seattle, WA}
\email{soumikpal@gmail.com}

\keywords{Markov chains, mixing time, card shuffling, reflected diffusions, Skorokhod maps, planar Brownian motion, Harris flow, stochastic flow of kernels}

\subjclass[2000]{60J10, 60J60}

\thanks{This research is partially supported by NSF grants DMS-1208775, DMS-1308340 and DMS-1612483}

\date{\today}

\begin{abstract}
	We propose a model of card shuffling where a pack of cards, spread as points on a square table, are repeatedly gathered locally at random spots and then spread towards a random direction. A shuffling of the cards is then obtained by arranging the cards by their increasing $x$-coordinate values. When there are $m$ cards on the table we show that this random ordering gets mixed in time $O\left(\log m\right)$. Explicit constants are evaluated in a diffusion limit when the position of $m$ cards evolves as an interesting $2m$-dimensional non-reversible reflected jump diffusion in time. Our main technique involves the use of multidimensional Skorokhod maps for double reflections in $[0,1]^2$ in taking the discrete to continuous limit. The limiting computations are then based on the planar Brownian motion and properties of Bessel processes. 
\end{abstract}

\maketitle

\section{Introduction}\label{sec:intro}

\subsection{The {\gands} model of spatial shuffling} Let $D=[0,1]^2$ represent a square table. Imagine $m$ labeled cards spread on this table. We will ignore the dimensions of the cards and represent them as particles with spatial positions in $[0,1]^2$.  Suppose at each discrete time step an agent selects a spot uniformly at random in $D$. Consider all cards whose current position lies in a disc of radius $\delta>0$ centered at that point. She gathers all such cards to the center of the disc in a single heap, randomly selects a direction, tosses an independent coin for each card in that heap, and, for those cards whose coins turn up heads, pushes the cards in that direction for a fixed, bounded distance while keeping them within the boundaries of the table. Other cards, including those whose coins turn up tails, are not moved. She does this independently at each time step. After $T$ steps the cards are projected on the $x$-axis and arranged in a line in the increasing order of the $x$-coordinate values. We are interested in the resulting random permutation of the set $[m]:=\{ 1,2,\ldots, m   \}$, especially in estimating $T$ that guarantees that this terminal random permutation is approximately uniform in total variation distance, irrespective of the initial positions of the cards. The model is designed to mimic a popular way to mix cards (called smooshing) by gathering cards locally using both palms and then spreading the cards by dragging them under the palm.

We now give a more formal definition. Let $U$ be the closed disc of radius $\delta$ centered at the origin. 
For any $z\in \rr^2$, the set $z+U$ will denote the disc centered at $z$. Colloquially we will refer to $U$ as the ``palm'' of the agent put at the point $z$.

\medskip

\begin{figure}[t]
	\includegraphics[width=7.5cm]{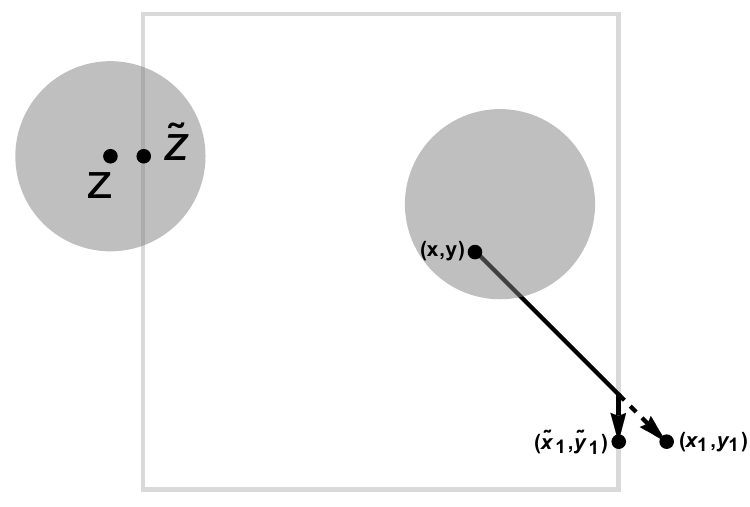}
	\caption{The effect of the boundary. On the left, the palm is centered at $z$ which is outside the table. All cards in the shaded area inside the square will then be gathered at $\widetilde{z}$. On the right we see spread (without gather). $(x,y)$ is a point under the palm which is not gathered at the center. The palm drags the card at $(x,y)$ along a straight line until the $x$-coordinate hits the boundary. The $x$-coordinate freezes while the $y$-coordinate continues to decrease.}
	\label{fig:gather_spread}
\end{figure}

We now describe the ``gather'' operation. Consider a point $z_0=(x_0,y_0)\in D$. Let $G^{z_0}:D \rightarrow D$ denote the map
\[
G^{z_0}(z):= z_0 1\left\{  z\in z_0 + U \right\} + z 1\left\{ z\notin z_0 + U \right\}, \qquad z=(x,y)\in D.
\] 
That is, points under the palm are gathered to the center. If $z$ is close to the boundary of $D$, there are fewer points to which it can be gathered. For tractability of our stochastic processes we will require some spatial homogeneity. This inspires the following extended definition. 

Let $\oD$ denote the Minkowski sum of the two sets $D$ and $U$. That is $\oD= \cup_{z\in D} \left\{ z + U    \right\}$. 
For $x\in \rr$, let 
\[
\ol{x}:=\max(\min(x,1),0)=\begin{dcases}
	0,& \text{if $x< 0$.}\\
	x,& \text{if $0\le x\le 1$.}\\
	1,& \text{if $x> 1$.}
\end{dcases} 
\]
For $z_0=(x_0, y_0) \in \oD \backslash D$, define $\widetilde{z}_0:= \left( \ol{x_0}, \ol{y_0}  \right)$ and
\[
G^{z_0}(z)=\widetilde{z}_0 1\left\{  z\in z_0 + U \right\} + z 1\left\{ z\notin z_0 + U \right\},\; z\in D.
\]
That is, points under the palm are gathered to a boundary point in case the center is outside $D$. See Figure \ref{fig:gather_spread} where the point $\mathbf{z}$ is outside the unit square and the corresponding $\mathbf{\tilde{z}}$ is on the boundary. Hence if the palm is placed such that the center is on $\mathbf{z}$, all cards under the palm will be gathered at $\mathbf{\tilde{z}}$.

We now define the ``spread'' operation. Fix $s_0 >0$, a $\theta \in [0, 2\pi]$, and a point $z_0=(x_0, y_0) \in \oD$.  
For $z=(x,y)\in D$ such that $z\in z_0+U$, let $x_1=x+s_0\cos(\theta)$ and $y_1=y+s_0 \sin(\theta)$. Define the map $f^{z_0, \theta}_{s_0}:D \rightarrow D$, by
\eq\label{eq:spread}
f^{z_0, \theta}_{s_0}(z) = \begin{cases} 
	\left( \ol{x_1}, \ol{y_1}   \right),& \text{if}\; z\in \{z_0+U\} \cap D.\\
	z, & \text{otherwise}.
\end{cases}
\en
Thus, for $z\in \{z_0+U\} \cap [0,1]^2$  (``cards under the palm'') we move $z$ linearly in the direction $\theta$ for distance $s_0$ until we hit the boundary of the table and stop moving the coordinate that is at the boundary. Nothing else is touched. 
Again, see Figure \ref{fig:gather_spread} where the point $(x,y)$, under the palm, is dragged until the $x$-coordinate hits one and does not increase any more. The $y$-coordinate, however, continues to decrease. $(x_1,y_1)$ represents the position of the particle had there been no boundary. The actual position is given by the coordinates $\left( \ol{x_1}, \ol{y_1} \right)$, where $\ol{x_1}=1$ since $x_1>1$.

If two or more cards cards occupy the same position (say due to gathering) we need additional randomization to break the ties. Fix $0 < p < 1$. Every time a card is about to be spread, it tosses an independent coin with probability $p$ of turning up heads. If it turns up heads, the card follows the palm in the chosen direction. Otherwise it does not move.  
We define this process formally below. 

Fix $\lambda> 0$. For mathematical convenience we consider continuous time and model the random selection of spots by the agent as a Poisson point process (PPP) on $(0, \infty)\times\oD$ of constant rate $\lambda$ with respect to the product Lebesgue measure. Since $\oD$ is bounded, it is possible to enumerate the atoms of this point process in a sequence $\left\{ \left(  t_i, w_i \right),\; i \in \NN    \right\}$ such that $t_1 < t_2 < t_3 < \ldots$, and each $w_i \in \oD$. One can obtain a discrete time model by discarding $t_i$s and considering the sequence $(w_i,\; i \in \NN)$ of i.i.d. uniformly chosen points in $\oD$ at discrete time points $i\in \NN$.

Let $\nu_0$ be any probability distribution on $[0,2\pi]$ that satisfies the following unbiasedness assumption. Here and throughout, $\nu(f(\cdot))$ for a probability measure $\nu$ and a function $f$, suitably measurable, will denote the expectation of $f$ under $\nu$.

\begin{asmp}\label{asmp:unbiased} Assume that $\nu_0\left( \cos(\cdot) \right)=\nu_0(\sin(\cdot))=\nu_0\left( \sin(\cdot)\cos(\cdot) \right)=0$ and that $\nu_0\left( \cos^2(\cdot) \right)=\nu_0\left( \sin^2(\cdot) \right)=\sigma^2$, for some $\sigma >0$.
\end{asmp}

Examples of $\nu_0$ include the uniform distribution over $[0,2\pi]$ and the discrete uniform distribution over the set $\{ 0, \pi/2, \pi, 3\pi/2 \}$ with $\sigma^2=1/2$ in both cases.

Generate an i.i.d. sequence $\left( \theta_i,\; i \in \NN \right)$ sampled from $\nu_0$ and consider the sequence of functions $\left( f_{s_0}^{w_i, \theta_i},\; i \in \NN  \right)$. Fix $m \in \NN$. Let $0< p<1$ be fixed as before.

\begin{figure}[t]
	\begin{tabular}{ll}
		\includegraphics[width=4.8cm]{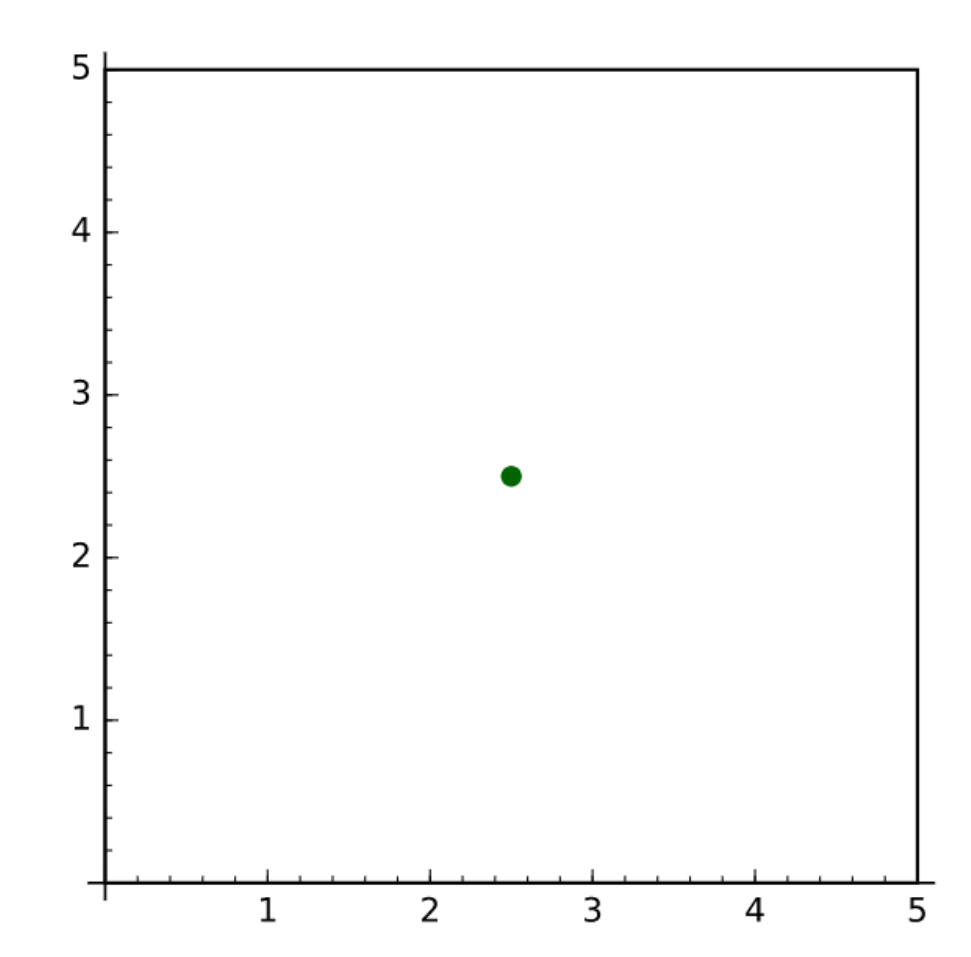}
		&
		\includegraphics[width=5.05cm]{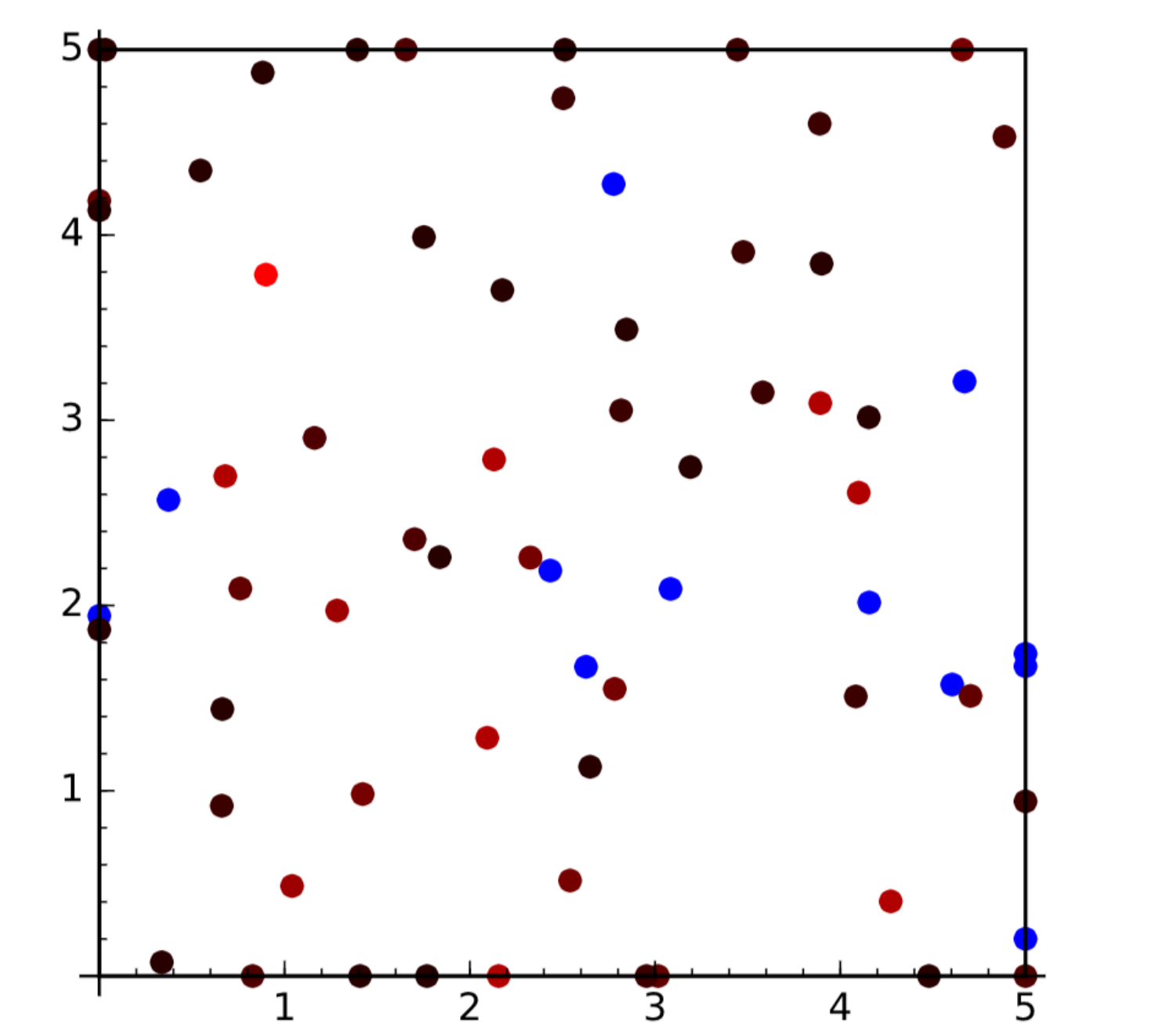}
	\end{tabular}
	\caption{\scriptsize{Configuration of $250$ cards after $3000$ steps of \gands. Table is $[0,5]^2$, $\delta=0.5$, $s_0=1$, $p=0.5$. Initially all cards are at the center shown on the left. At the terminal step, shown on the right, there are $68$ clusters of cards which are colored by the number of points in the cluster relative to the maximum. Singleton clusters are colored blue while the largest clusters is in bright red. There are $23$ clusters on the boundary. (Figure produced by Yuqi Huang)}.}
	\label{fig:simulation}
\end{figure}

\begin{defn}\label{defn:smooshingcards}(\textbf{$m$ point motion under {\gands}})  Let $Z_j(0)=z_j\in D$, $j\in [m]$, denote the initial positions of $m$ cards. Generate an {i.i.d.}\ array $\left( H_j(i), \; j\in [m], \; i\in \NN  \right)$ of Bernoulli($p$) random variables. Define sequentially, for $i\in \NN$, $j\in [m]$, starting with $t_0=0$,
	\eq\label{eq:gands}
	\begin{split}
		Z^0_j(t_i) &:= G^{w_i}\left( Z_j(t_{i-1})  \right), \quad \text{\textbf{(gather)} and} \\
		Z_j(t_{i}) &:= H_j(i) f_{s_0}^{w_i, \theta_i}\left( Z^0_j(t_{i})  \right) + \left(1 - H_j(i)\right)Z^{0}_j(t_{i}) , \quad \text{\textbf{(spread)}}.
	\end{split}
	\en
	Extend the sequence $\left( Z_j(t_i),\;j \in [m],\; i=0,1,2,\ldots  \right)$ to all $t \in [0, \infty)$ by defining 
	\eq\label{eq:rcllext}
	Z_j(t)= Z_j(t_{i-1}), \quad t_{i-1}\le t < t_i,\quad i\in \NN,\; j \in [m].
	\en
	The resulting continuous time Markov chain $\left( Z_j(t),\; j\in [m],  t\ge 0  \right)$ on the state space $D^{m}$ will be called the $m$ \textit{point motion under the {\gands} model}. 
\end{defn}

\begin{rmk}
	As before we choose to work with $\oD$ instead of $D$ for technical reasons. For a PPP on $(0, \infty) \times D$ of constant rate $\lambda$, the cards near the boundary move less frequently than those near the center of the table, affecting spatial homogeneity. Intuitively, the two models are not too different if $U$ is small compared to $D$.
\end{rmk}

Many natural variations of this model can be analyzed by the methods of this paper. For example, instead of gathering all the cards under the palm at the center one can choose a new independent, uniformly at random, position under the palm for each card. This is an example of \textit{local mixing}.  It is also possible to change the spread by selecting a probability distribution on $[0, \infty)$ and deciding the spread $s_0$ of each gathered card by sampling independently from it. For all such models the analysis in the paper remains similar. We will return to this point again.

We now describe what we mean by shuffling using this spatial motion. Let $\perm_m$ denote the group of permutations of $m$ labels. 
Consider an $m$-tuple of real numbers $(x_1, \ldots, x_m)$. Define the {\rtoix} permutation corresponding to this set in the following way. If every coordinate is distinct, then one can arrange the coordinates in increasing order $x_{(1)} < x_{(2)} < \ldots < x_{(m)}$, for a unique element $\gamma\in \perm_m$ such that $x_{\gamma_i}=x_{(i)}$. Say that the \textit{rank} of $x_{\gamma_i}$ is $i$ while the index of $x_{\gamma_i}$ is $\gamma_i$. When all coordinates are not distinct, the {\rtoix} will refer to a random variable taking values in $\perm_m$ which is obtained by ``resolving the ties at random''. To do this rigorously, generate {i.i.d.}\ uniform $[0,1]$ random variables $\{ U_1, \ldots, U_m \}$. Consider the set of pairs $\left\{ \left( x_1, U_1  \right), \ldots, \left(  x_m, U_m \right)   \right\}$. Rank the above sequence in the increasing dictionary order. That is $\left(  x_i, U_i \right) < \left( x_j, U_j\right)$ if, either $x_i < x_j$ or $\{ x_i=x_j \} \cap \{ U_i < U_j \}$. 
It is clear that, almost surely, this gives us a totally ordered sequence with no equalities. As before, let $\gamma \in \perm_m$ be the unique element such that $\left( x_{\gamma_i}, U_{\gamma_i}  \right)$ is the $i$th smallest element in the above ordering. Then $\gamma$ will be called the {\rtoix} permutation corresponding to $( x_1, \ldots, x_m )$. When $( x_1, \ldots, x_m )$ is random, the {\rtoix} permutation is the random permutation obtained by integrating with respect to its law. For example, if $m=4$ and $x_1=1, x_2=2, x_3=1, x_4=3$ then resolve the tie between $x_1$ and $x_3$ by generating i.i.d. Uni$(0,1)$ random variables $U_1, U_3$. Depending on whether $U_1 < U_3$ or $U_1 > U_3$, the {\rtoix} permutation would be either of the following (equally likely)
\[
\begin{pmatrix}
	1 & 2 & 3 & 4\\
	1 & 3 & 2 & 4
\end{pmatrix} \quad \text{or} \quad \begin{pmatrix}
	1 & 2 & 3 & 4\\
	3 & 1 & 2 & 4
\end{pmatrix}.
\]  

Consider the Markov chain $Z_j(\cdot)=\left( X_j(\cdot), Y_j(\cdot) \right)$, $j\in [m]$, from Definition \ref{defn:smooshingcards}. For any time $t \ge 0$, let $\gamma(t)$ denote the {\rtoix} permutation corresponding to the set of $x$-coordinates of the $m$ points, $\{ X_1(t), \ldots, X_m(t)  \}$.  Let $\norm{\gamma(t) - \uni}_{\TV}$ denote the total variation distance between the law of $\gamma(t)$ and the uniform distribution on $\perm_m$. By ``mixing time of shuffling'' we refer to the first time $\tmix(\epsilon)$ when this total variation distance is smaller than a given $\epsilon>0$, say $1/4$, irrespective of the vector of initial positions $\left(Z_j(0)=z_j,\; j \in [m]\right)$.

In Section \ref{sec:discrete} we provide an $O(\log m)$ mixing time bound for this Markov chain by developing a general scheme for all such problems. However, precise calculations of constants are not easy to derive for the discrete model. This difficulty is partly due to the existence of the boundary of $D$. Of course, projection of $Z_j$'s on the $x$-axis is arbitrarily chosen. By symmetry, the same bound holds for projection on the $y$-axis. 
Whether the problem of the boundary gets simplified by a more judicious choice of curve for projection is an interesting problem that is not answered here. 

In Section \ref{sec:diffusionlimit} the problem is simplified under a jump-diffusion limit as follows. First we will consider the parameter $\lambda \rightarrow \infty$ and $s_0=1/\sqrt{\lambda}\rightarrow 0$ while keeping $D$, $U$, and $\nu_0$ fixed. 
This means that we will make a lot of short spread moves. Furthermore, we will make gatherings rare by defining a \textit{lazy gathering model}. For each $t_i$ of the PPP in Definition \ref{defn:smooshingcards} we will toss an independent coin with a probability of heads given by $1/\lambda$. If the coin turns heads, we perform both gather and spread steps in \eqref{eq:gands}; otherwise we skip the gather step in \eqref{eq:gands} and only do the spread (with notations from \eqref{eq:gands}),
\eq\label{eq:gands2}
Z_j(t_i):= H_j(i) f_{s_0}^{w_i, \theta_i}\left( Z_j(t_{i-1})  \right) + \left(1 - H_j(i)\right)Z_j(t_{i-1}) , \quad i\in \NN, \; j\in [m].
\en    
Informally, per unit amount of time, we spread cards about $\lambda$ many times, each time by distance $O(1/\sqrt{\lambda})$, before gathering once. In fact, we gather at the jumps of a Poisson process of rate one. In between these jumps the Markov chain of $m$ point motion converges in law to a $2m$ dimensional diffusion with state space $D^{m}$ and reflected at the boundary. Thus the process evolves as a reflected diffusion that jumps according to a kernel at the points given by a Poisson process of rate one. This jump-diffusion is non-reversible and has reflections at the boundary of the non-smooth domain $D$. Hence information regarding its stationary distribution and rate of convergence cannot be inferred by standard methods. Nevertheless, we can bound the mixing time of shuffling thanks to our coupling scheme.   


\begin{thm}\label{thm:mixingtime}
	Under Assumption \ref{asmp:unbiased}, for all $m\ge 2$, the $m$-point motion under the lazy gathering model converges weakly in the Skorokhod space to a $2m$-dimensional reflected jump-diffusion model with state space $[0,1]^{2m}$ satisfying a stochastic differential equation described below in \eqref{eq:limitsde}.
	
	The mixing time of shuffling $m$ cards in this jump-diffusion model, $\tmix(\epsilon)$, is bounded above by   $\frac{1}{\mathfrak{p}}\log(m/\epsilon)$, where $0 < \mathfrak{p} < 1$ is given by
	\eq\label{eq:constantp}
	\mathfrak{p}:= \frac{\delta^2}{2p\pi\sigma^2 (1+2\delta)^2} K_0\left( \frac{\sqrt{2}+2\delta}{\sigma\delta \sqrt{\pi p (1-p)}}   \right).
	\en
	Here $K_0$ is the modified Bessel function of the second kind.
\end{thm}

Details about the modified Bessel function $K_0$ can be found in \cite[Section 9.6]{AbraSteg}. Notably, for a real argument $z$, the following asymptotics hold: $K_0(z)\sim -\log(z)$ as $z\rightarrow 0$, and $K_0(z)\sim \sqrt{\pi/(2z)} \exp(-z)$, as $z\rightarrow \infty$. 

The constant in \eqref{eq:constantp} is an increasing function of $\delta$, for a fixed choice of $p$ and $\sigma^2$. However, it is rather small and the bound is far from optimal. For example, suppose the palm is large enough to cover the entire table if placed at the center. That is, $2\delta > \sqrt{2}$. Then, after an exponential amount of time, the palm will gather all points under it which are then automatically uniformly shuffled by our tie-breaking rule. But, of course, this bound will not work for moderate to small $\delta$. On the other hand, for a choice of a moderate $\delta=0.3$ we get a minuscule $\mathfrak{p}\approx 1.88\times 10^{-7}$! Theorem \ref{thm:mixingtime} should be interpreted as simply an upper bound that is logarithmic in the number of cards with a large, but known, constant.

It is worthwhile to discuss some structural similarities between our card shuffling model and the motion of fluid under random stirring. The motion of a fluid is generally characterized by either an Eulerian or a Lagrangian description. The Eulerian description is provided by an explicit velocity field $v(t,x)$, which is the velocity that any fluid particle experiences at time $t$ if its position at that time is given by $x$ in an Euclidean space. The Lagrangian description $X(t)$ traces out the position of a single particle as a function of time. There is considerable literature on spatial mixing of fluids in non-stochastic settings. Here, a viscous liquid (e.g., molten glass) is considered and the behavior of a set of tagged particles (say, that of a dye) is studied. See Sturman et al \cite{sturman06} and Paul et al \cite{paulhandbook} for textbook accounts and Gouillart et al \cite{GouillartB, GouillartC, GouillartThesis} for recent advances. Our gather-and-spread operation is an Eulerian description in a stochastic setting with one important difference. Two fluid particles at the same position experience the same velocity field and will never separate. However, two cards at the same position can separate due to additional randomness. Nevertheless two such cards are \textit{exchangeable} in the sense that permuting their paths is a measure preserving operation. This is a critical feature of our model that is repeatedly invoked. The Markov chain of the positions of cards is then the corresponding Lagrangian motion. 

As mentioned before, gathering is a local mixing strategy. For example, imagine a viscous fluid (say, cake batter). We take a beater, randomly choose spots in the batter, and vigorously mix the location. If there are particles on the batter in that specific location, they will be so vigorously mixed as to be \textit{exchangeable} in their future evolution. Our results are valid for all such local mixing procedures. The spread, on the other hand is an ``advection-diffusion'' where we imagine a rod dipped in the fluid being dragged in a direction and creating a shear in its wake. Now, for best mixing practices in the non-stochastic setting it is intuitive to desire a chaotic system. This is usually achieved by repeating two perpendicular directions of shear with self-crossing trajectories (see Aref \cite{Aref}) such as a repeated figure eight movement through the fluid. This, along with diffusion in the fluid, cause mixing. In this sense, our {\gands} moves have been designed to study the effect of local mixing and a stochastic advection on particles in an underlying fluid.  

The analogy, however, breaks down in the meaning of the word ``mixing'', which is used in a different sense in fluid mixing. In that context, the points are unlabeled and we are interested in the difference between the empirical distribution of the points from the uniform distribution. This is not the case here. In fact, as shown in Figure \ref{fig:simulation}, there will always be clumps of points. In fact, it is not hard to see from our diffusion analysis that even for a single point, the uniform distribution is not the stationary distribution since the corners will have slightly more mass than the rest. Nevertheless we find this analogy motivating to further study both subjects.

\subsection{Review of literature}\label{sec:review} The mathematical study of shuffling cards has a long history going back to Poincar\'e \cite{Poincare}. Of course, this is a special case of the quantitative study of rates of convergence of Markov chains to their stationary distributions and random walks on groups. We recommend the book \cite{LPW} for an introduction and \cite{SC01, SC04} for a comprehensive overview. 

The present paper concerns \textit{spatial} mixing. Here, the literature is thinner and we offer a brief review. A crucial difference between the following literature and our model is that the stochastic process of permutations given by the $x$-coordinates of the cards in our model is not a Markov process by itself, but a function of an underlying Markov process given by the spatial positions of the cards. 

We start with the \textit{random adjacent transposition} chains. Picture $n$ labeled cards in a line, originally in order $1,2,\ldots,n$. At each step an adjacent pair of cards is chosen at random and the two cards are transposed. Results of Diaconis and Saloff-Coste \cite{DSC93} followed by Wilson \cite{Wilson04} show that order $n^3\log n$ steps are necessary and sufficient for convergence. Recently Lacoin \cite{Lacoin} sharpened this to show that there is a total variation cut-off at $n^3\log n /(2\pi^2)$.   

Random adjacent transpositions is a one-dimensional spatial model. One can extend the analysis to higher dimensions. For example, in two dimensions cards can be arranged on the vertices of a $\sqrt{n} \times \sqrt{n}$ grid. At each step an edge is chosen at random and the two cards at the vertices of this edge are transposed. This takes order $n^2\log n$ to mix \cite{DSC93}. These problems have become of recent interest as the `interchange process' because of their connections to suggestions of Dirac and Feynman in quantum mechanics. See \cite{ngthesis} for a tutorial and articles by Alon and Kozma \cite{AlonKozma} and Berestycki and Kozma \cite{BeresKozma} for interesting results.   

A related `mean-field' walk is the `random-to-random' walk. A randomly chosen card is removed and reinserted in a random position. In \cite{DSC93} order $n\log n$ steps are shown to be necessary and sufficient. In a tour-de-force \cite{DiekerSaliola} Dieker and Saliola determine all the eigenvalues and eigenvectors of this chain. A sharp cut-off has been recently established by Bernstein and Nestoridi in \cite{CutoffRandom}.    

A more overtly spatial walk is studied by Pemantle \cite{Pemantle94}. He considers $n^2$ cards at the vertices of an $n\times n$ array. At each step an element $x$ of the array is chosen uniformly at random. Then with probability $1/2$ the rectangle of cards above and to the left of $x$ is rotated $180^{\circ}$ degrees, and with probability $1/2$ the rectangle of cards below and to the right of $x$ is rotated $180^{\circ}$ degrees. While this is not a particularly natural model, it does have fascinating mixing properties. Pemantle shows that order $\Theta(n^2)$ steps are necessary and sufficient to mix all $n^2$ cards. However, for a fixed set of $k$ cards, $c(k)n$ steps suffice. Here, $c(k)$ is of order $k^3\left(\log k\right)^2$.

We conclude this review by reporting that we have also undertaken both simulations and a study of real world smooshing. Simulations on the gather-and-spread and related models were done by students at the University of Washington. The full report can be found at \cite{wxml}, from which Figure \ref{fig:simulation} is taken. At Stanford, a group of students smooshed for various times ($60$ seconds, $30$ seconds, $15$ seconds) with $52$ cards and $100$ repetitions for each time (so $100$ permutations for each of three times). A collection of ad-hoc test statistics were studied: position of original top (bottom) card, number of originally adjacent cards remaining adjacent, distance to the starting order in various metrics, length of the longest increasing subsequence, etc. The results suggest that randomness sets in after $30$ seconds or so while $15$ seconds was far from random. Since this kind of shuffling is used in both poker tournaments and Monte Carlo (see Diaconis et al \cite{DiaEvansGraham} for more on this), further study is of interest.

\section{Dimension consistency}\label{sec:warmup}

\subsection{A toy one-dimensional model}

Before we employ our coupling scheme in the general setting, it is instructive to use the same strategy in a simpler model. 
Fix arbitrary positive integers $m,N$. Consider $m$ labeled particles (representing cards) on a line. The position of each card can be one of the $N$ positive integers $[N]:=\{ 1,2,\ldots, N \}$.   

At time zero, the position of each particle is fixed, say, $X_i(0)$, $i \in [m]$. Time is discrete: $t=0,1,2,3,\ldots$. At each time $t$, pick a uniform random site, i.e., a random integer $k$ from the set $[N]$. Toss a fair coin to decide left or right. For each card that is currently at site $k$ (there may not be any), toss an independent coin with probability $p$ of turning up heads. 
\begin{itemize}
	\item If we decided left, move all cards whose coins turn heads to the left by one, if possible. That is, if $X_i(t)=k$ and the coin for $i$ turned heads, then $X_i(t+1)=(k-1)$, unless $X_i(t)=1$, in which case $X_i(t+1)=1$.  
	\item If we decided right, move all cards whose coins turn heads to the right by one, if possible. That is, if $X_i(t)=k$ and the coin for $i$ turned heads, then $X_i(t+1)=X_i(t)+1$, unless $X_i(t)=N$, in which case $X_i(t+1)=N$. 
	\item For all other cards $X_j(t+1)=X_j(t)$.  
\end{itemize} 
That is, imagine cards in boxes on a line, where $X_i(t)$ is the box of the $i$th card at time $t$, and cards pile on top of existing cards when they jump. But these details are not important mathematically. Repeat the above procedure by sampling a random site and the coin tosses at every time independent of the past. This gives us a stochastic process $X(t):=\left( X_1(t), X_2(t), \ldots, X_m(t)   \right)$, $t=0,1,2,\ldots$, which is the $m$ point motion on the line. 
At the end of time $T$, if we gather cards from left to right (breaking ties in any order), we get a shuffling of $m$ cards. 
We will estimate the mixing time of this shuffle using coupling.


Consider another pack of cards whose positions will be denoted by $X_i'(\cdot)$, $i=1,2,\ldots, m$. Critically, assume that the indices of the process $X'$ are assigned, uniformly at random, independent of their starting positions or that of the $X$s. This is true, for example, if $X'(0)$ is distributed according to the stationary distribution due to exchangeability of the coordinates. However, we do not need to assume that $X'(0)$ is distributed according to the stationary distribution for the following argument to hold. What is important is that, at any future point in time $t$, its rank-to-index permutation process $\gamma'(t)$ remains uniformly distributed over $\perm_m$, by the induced exchangeability irrespective of the initial positions of $X'$.    

Let ${Q}_m(x)$ denote the law of the process $X$ starting from $X(0)=x$ and let $\mathbb{Q}_m$ denote the law of the process $X'$ (under randomized indices). Now consider $2m$ cards whose positions will be denoted by $\left(X_1, \ldots, X_m, X_1', \ldots, X_m' \right)$. All $2m$ cards will now move according to the same choices of site selection and the direction to move. That is, both $X$ and $X'$ run using the same ``noise'', i.e., by the same realizations of uniform pick of random sites as well as the coin tosses to decide left or right.

Let $\tau_{i}$ denote the first time $t$ such that $X_i(t)=X_i'(t)$. Couple the two processes for all subsequent times by defining $X_i(t)=X'_i(t)$ for all $t \ge \tau_i$. Note that, for the individual coin tosses for the cards in order to decide if it should move or not, we use independent tosses until any $\tau_i$, after which $X_i$ and $X_i'$ are identified and will use the same coin toss. 

The crucial observation is that the the marginal distribution of the $m$-dimensional process $(X_1, \ldots, X_m)$ is still ${Q}_m(X(0))$, as if the other $m$ coordinates $(X_1', \ldots, X_m')$ do not exist. Similarly, the law of the other $m$-dimensional process $(X_1',\ldots, X_m' )$ is $\mathbb{Q}_m$. We will call this property \textit{dimension consistency} and will be shared by all the models in this paper. This property is a consequence of the fact that our models are point motions of stochastic flows of kernels that arose from the article by Harris \cite{Harris81}. See  \cite{BaxHarris86, LeJan85, meteor15, HowWarren09, LejanRaimond04} for subsequent developments in this theory. Hence, if we let $\tau^*=\max_i \tau_i$, then it follows that the total variation distance between $X(t)$ and $X'(t)$ is bounded above by $P(\tau^* > t)$. Thus, it suffices to estimate the tails of $\tau^*$. 

Consider again a system of $2m$ cards where two of them, say $X_1$ and $X_1'$, are initially at position $x$ and $y$, respectively. Let $\tau_{xy}$ denote the coupling time of these two cards. It follows from dimension consistency that the law of $\tau_{xy}$, under any initial positions of the $2m$ cards such that $X_1(0)=x, X'_1(0)=y$, is equal to its law under $Q_2(x,y)$.  

Suppose that there is a random variable $\zeta$ such that every $\tau_{xy}$ is stochastically dominated by $\zeta$, irrespective of $x$ and $y$. In the above example, suppose $1\le X_1(0)=x \le X_1'(0)=y$ without loss of generality. Both $X_1$ and $X_1'$ are lazy reflected random walks that jump at each turn with probability $p/N$ (probability $1/N$ for its site to be chosen and $p$ for its coin to turn up heads). Thus, irrespective of $x,y$, $\tau_{xy}$ is bounded by $\zeta^*$ which is the hitting time of $1$ for a lazy reflected random walk starting at $N$. The distribution of $\zeta^*$ can be found explicitly, but what is more important for our purpose is that there is another (possibly a different choice) random variable $\zeta$ such that every $\tau_{xy}$ is stochastically dominated by $\zeta$, and $\zeta$ has geometric tails. To see this note by the Markov property, 
\[
\begin{split}
	{Q}_2(x,y)&\left( \tau_{xy} > t + s \mid \tau_{xy} > t,\; X_1(t)=x', X_1'(t)=y' \right) \\
	&= {Q}_2(x', y')\left( \tau_{x'y'} > s\right) \le P(\zeta^* > s). 
\end{split}
\] 
Thus, ${Q}_2(x,y)\left( \tau_{xy} > t + s \mid \tau_{xy} > t \right) \le P(\zeta^* > s)$, and, hence
\[
{Q}_2(x,y)\left( \tau_{xy} > t + s \right) \le {Q}_2(x,y)\left( \tau_{xy} > t \right)  P(\zeta^* > s) \le P(\zeta^* > t)P(\zeta^* > s).
\]

Choose an $s$ such that $P(\zeta^* > s) \le 1/2$. Then, by iterating the above argument
\[
{Q}_2(x,y)\left( \tau_{xy} > t + k s \right) \le {Q}_2(x,y)\left( \tau_{xy} > t \right) \left( P(\zeta^* > s)\right)^k \le \frac{1}{2^k}. 
\]
Thus, there exists a constant $C$, depending on $s$ (say $C=2^{s}$), such that
\[
{Q}_2(x,y)(\tau_{xy} > t) \le C 2^{-t}, \quad \text{for all $t>0$},
\]
where we have chosen the parameter $t$ to be continuous for convenience. 


Now return to the scenario of $2m$ cards, $m$ of which have randomized indices. Recall that $\tau^*=\max_i \tau_i$. By a union bound, for any $t>1$,
\[
P\left( \tau^* > t \log_2 m \right) \le P\left( \cup_{i=1}^m \{\tau_i > t \log_2 m\} \right)  \le C m 2^{- t \log_2 m} \le C m^{-t+1}. 
\]
This gives an upper bound on the total variation distance between random vectors $X(t)$ and $X'(t)$. 

Consider from Section \ref{sec:intro} the rank-to-index permutation process $\gamma(t)$ corresponding to $X(t)$, and $\gamma'(t)$ corresponding to $X'(t)$. Recall that $\gamma'(t)$, for each $t$, is distributed uniformly over $\perm_m$. It follows (e.g. by choosing the same set of uniform random variables to break ranks) that the total variation distance between $\gamma(t)$ and $\gamma'(t)$ cannot be larger than that of $X(t)$ and $X'(t)$. Thus $\norm{\gamma(t) - \uni}_{\TV} \le C m^{-t+1}$. It is easy to see that the right side of the above inequality is $\epsilon$ if we choose $t=O(\log m)$. Thus the mixing time of shuffling $\tmix=O(\log m)$. 

We wish to repeat the fact that it is unimportant for this argument to assume that $X'$ is distributed according to the stationary distribution of the $m$-point motion. This is because we are only interested in the mixing time of shuffling which depends on the rank-to-index permutations and not on the spatial locations. However, this saves us the trouble of proving the existence of a stationary distribution for more complex models.

\comment{
	Let $\left( \Omega, \left\{ \mathcal{F}_t \right\}, P  \right)$ be the natural filtered probability space that supports all the uniform site picks and the coin tosses for each card. Then the process $X=\left( X(t),\; t\ge 0  \right)$ is adapted to this filtration. To make this dependence explicit $X_j(t, \omega)$ will denote the positive integer that is the location of the $j$ particle at time $t$ when we have the sample point $\omega\in \Omega$. Observe the following exchangeability property. 
	
	\begin{lemma}\label{lem:exchangeable}
		Suppose $X_1(0)=x_1(0)=x_2(0)=X_2(0)$. For any $t\in \mathbb{N}$, consider a sequence of vectors $\left( x_1(s), x_2(s), \ldots, x_m(s)  \right)$, $s=0,1,\ldots, t$. Now switch the paths $x_1$ and $x_2$. Then 
		\[
		\begin{split}
			\Prob&\left(  X_1(s)=x_2(s), X_2(s)=x_1(s), X_3(s)=x_3(s), \ldots, X_m(s)=x_m(s), 0\le s\le t\right)\\
			&=\Prob\left(  X_1(s)=x_1(s), X_2(s)=x_2(s), X_3(s)=x_3(s), \ldots, X_m(s)=x_m(s), 0\le s\le t\right).
		\end{split}
		\]
		A similar statement holds for any $i\neq j\in [m]$.
	\end{lemma}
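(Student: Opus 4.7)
The plan is to build a measure-preserving involution on the underlying probability space that in effect swaps the roles of cards $1$ and $2$, and then to verify by a short induction that this involution carries one event in the lemma exactly to the other.

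First I would unpack the driving randomness. The filtration $\{\mathcal{F}_t\}$ is generated by three mutually independent families: the i.i.d.\ uniform site picks $\{k_t\}_{t\ge 1}$ on $[N]$, the i.i.d.\ fair left/right direction coins $\{D_t\}_{t\ge 1}$, and the i.i.d.\ Bernoulli$(p)$ card coins $\{C_j(t)\}_{j\in[m],\,t\ge 1}$. Define $\phi\colon\Omega\to\Omega$ by swapping the sequences $C_1(\cdot)$ and $C_2(\cdot)$ while fixing every other coordinate. Because $\{C_1(t)\}$ and $\{C_2(t)\}$ are i.i.d.\ with the same law and are jointly independent of the site picks, the direction coins, and the coins for cards $j\ge 3$, the map $\phi$ is a measure-preserving involution of $(\Omega,\Prob)$.

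Next I would prove by induction on $s\in\{0,1,\ldots,t\}$ that, on the event $\{X_1(0)=X_2(0)\}$, for every $\omega\in\Omega$,
\[
X_1(s,\phi(\omega))=X_2(s,\omega),\quad X_2(s,\phi(\omega))=X_1(s,\omega),\quad X_j(s,\phi(\omega))=X_j(s,\omega)\text{ for }j\ge 3.
\]
The base case $s=0$ is immediate from the hypothesis $X_1(0)=X_2(0)$ and the fact that initial positions are deterministic. For the inductive step, the key structural point is that the one-step update of card $j$ is a deterministic function of only four inputs: its current position $X_j(s)$, the site $k_{s+1}$, the direction $D_{s+1}$, and its own coin $C_j(s+1)$; it does not consult the positions or coins of the other cards. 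Under $\phi$, the inputs driving card $1$'s update at step $s+1$ are exactly the inputs that drove card $2$'s update in the original $\omega$, and vice versa, while every card $j\ge 3$ sees identical inputs. Applying the induction hypothesis to the positions and the definition of $\phi$ to the coins therefore yields the claim at step $s+1$.

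Finally, because $\phi$ preserves $\Prob$, the probability of the event
\[
\{X_1(s)=x_1(s),\,X_2(s)=x_2(s),\,X_j(s)=x_j(s)\text{ for }j\ge 3,\;0\le s\le t\}
\]
equals the probability of its $\phi$-preimage, which by the inductive identity is precisely the event with the paths $x_1$ and $x_2$ interchanged. The same argument with the pair $(i,j)$ in place of $(1,2)$ gives the general statement. I do not expect a real obstacle; the only delicate points are being explicit that swapping just two of the many i.i.d.\ card-coin sequences is measure preserving, and that each card's one-step update is a purely local function of its own coin, which is what allows the inductive swap to propagate cleanly through time.
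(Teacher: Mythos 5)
Your proof is correct and takes essentially the same approach as the paper: its terse two-sentence proof is exactly your construction — the involution $\phi$ that relabels the coin-toss sequence of card $1$ by that of card $2$, with measure preservation following from the i.i.d. structure. Your induction on $s$ simply makes explicit the point the paper leaves implicit, namely that each card's one-step update consults only its own coin together with the shared site and direction draws, so $\phi$ carries one path-event precisely to the other.
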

	
	\comment{
		Before the proof let us consider the following example with $t=2$, $m=3$, and $N=3$. Suppose the following table shows the positions of three cards at different times $t=0,1,2$. 
		\begin{center}
			\begin{tabular}{|c|c|c|c|}
				\hline
				& \multicolumn{3}{|c|}{Time}\\
				\hline
				Cards & 0 & 1 & 2\\
				\hline
				$X_1=$ & 1 & 2 & 3\\
				\hline
				$X_2=$ & 1 & 1 & 1\\
				\hline
				$X_3=$ & 2&2 & 3\\
				\hline
			\end{tabular}
			\bigskip
			
			\begin{tabular}{|c|c|c|c|}
				\hline
				& \multicolumn{3}{|c|}{Time}\\
				\hline
				Cards & 0 & 1 & 2\\
				\hline
				$X_1=$ & 1 & 1 & 1\\
				\hline
				$X_2=$ & 1 & 2 & 3\\
				\hline
				$X_3=$ & 2&2 & 3\\
				\hline
			\end{tabular}
		\end{center}
		\bigskip
		
		As one can see, in the second table, the paths of $X_2$ and $X_1$ have been reversed. 
		
		The path of $(X_1, X_2, X_3)$ in the first table can happen if the following events hold. 
		\begin{enumerate}[(i)]
			\item When $t=0$, select $k=1$ and decide to move up. The coin $C_1$ for card $1$ is $H$ and the coin $C_2$ for card $2$ is $T$. Hence card $1$ moves up. The probability of this happening is 
			\[
			\frac{1}{3} \times \frac{1}{2} \times p(1-p).
			\]
			\item When $t=1$, select $k=2$ and decide to move up. The coin $C_1$ for card $1$ is $H$ and the coin for card $3$ is also $H$. Hence, both cards move up. The probability of this happening is 
			\[
			\frac{1}{3} \times \frac{1}{2} \times p^2.
			\]
		\end{enumerate}
		Thus, the probability of the entire path is 
		\eq\label{eq:totalprobexam1}
		\frac{1}{36} p^3(1-p).
		\en
		
		The path of $(X_1, X_2, X_3)$ in the second table can happen if the following events hold. 
		\begin{enumerate}[(i)]
			\item When $t=0$, select $k=1$ and decide to move up. The coin $C_1$ for card $1$ is $T$ and the coin $C_2$ for card $2$ is $H$. Hence card $2$ moves up. The probability of this happening is 
			\[
			\frac{1}{3} \times \frac{1}{2} \times p(1-p).
			\]
			\item When $t=1$, select $k=2$ and decide to move up. The coin $C_2$ for card $2$ is $H$ and the coin for card $3$ is also $H$. Hence, both cards move up. The probability of this happening is 
			\[
			\frac{1}{3} \times \frac{1}{2} \times p^2.
			\]
		\end{enumerate}
		Thus, the probability of the entire path is 
		\eq\label{eq:totalprobexam2}
		\frac{1}{36} p^3(1-p).
		\en
		The equality of \eqref{eq:totalprobexam1} and \eqref{eq:totalprobexam2} is the claimed statement of the lemma. 
	}
	
	\begin{proof}[Proof of Lemma \ref{lem:exchangeable}]
		The two events are obtained from one another by relabeling the countably many coin tosses that determine the movement of one card by the other. Hence, this lemma follows from the exchangeability of a sequence of i.i.d. coin tosses.
	\end{proof}
	

	Enlarge $\left( \Omega, \left\{\mathcal{F}_t\right\}_{t\ge 0}, P  \right)$ to $\left(  \overline{\Omega},\left\{ \overline{\mathcal{F}}_t\right\}_{t\ge 0}, \overline{P}  \right)$ by including an independent uniform random permutation (called the shadow index) $\pi$. The filtration is enlarged so that   $\overline{\mathcal{F}}_t= \sigma\left(\mathcal{F}_t \cup \sigma(\pi)\right)$, $t\ge 0$. That is, we sample $\pi$ at time zero, independent of the process $X$. On this enlarged filtered probability space we will define five different permutation-valued processes given in the table below. Their definitions will follow shortly. 
	\smallskip
	
	\begin{center}
		\begin{tabular}{|c|c|c|c|}
			\hline
			& \rtoix & \ixtoshix & \rtoshix \\ 
			\hline
			No switch & $\gamma(t)$ & ${\pi}(t)$ & $\sigma(t)$ \\
			\hline
			Switch & $\gamma(t)$ & $\pi^*(t)$ & $\sigma^*(t)$\\
			\hline 
		\end{tabular}
	\end{center}
	\smallskip

	First we define the permutations in the ``No switch'' row. This is an auxiliary row which sets up the notation for the ``switch'' row for easier comparison. Recall that the label of a card is called its \textit{index}. We have already defined rank of a card in the introduction as the order in which the card appears among others when seen in the increasing order of their positions (and ties broken randomly). The shadow index, as defined above, is an alternate label attached to every card.

	\begin{enumerate}[(i)]
		\item $\gamma(t)$ is the (random) {\rtoix} permutation. Recall that $\gamma_i(t)=j$ if $X_j(t)$ is the $i$ smallest position in the current configuration of all cards and ties are broken at random. 
		\item $\pi(t)=\pi$, for all $t\ge 0$. 
		\item $\sigma(t)$ is the {\rtoshix} permutation at time $t$. It is the (random) permutation given by the composition (or product) $\pi(t) \circ \gamma(t)$. If there are no ties in $\gamma(t)$, then $\sigma_i(t)=j$ if $\pi_k(t)=j$, where $k$ is the index of the unique card such that $X_k(t)$ is the $i$th smallest among the current positions of all cards. If there are ties, generate $\gamma(t)$ by breaking ties at random before composing with $\pi$. 
	\end{enumerate}
	
	As an example, suppose $m=3$ and the positions of cards and their shadow indices (within $(\cdot)$) at time $t$ are shown below.
	
	\begin{figure}[h!]
		\centerline{
			\xymatrix@R-1.2pc{
				\txt{Index (shadow index)} \ar@{-->}[r] &2(2) \ar[d] & & 1(3) \ar[d] & & 3(1) \ar[d] & \\
				\txt{Position} \ar@{-->}[r] & {0} \ar@{-}[r] & {1} \ar@{-}[r] & {2} \ar@{-}[r] & {3} \ar@{-}[r] &{4} \ar@{-}[r] &
			}
		}
		\caption{Positions of three cards: $X_1(t)=2, X_2(t)=0, X_3(t)=4$ with shadow indices $\pi_1(t)=3, \pi_2(t)=2, \pi_3(t)=1$. Then, looking from left to right, $\gamma_1(t)=2, \gamma_2(t)=1, \gamma_3(t)=3$ while $\sigma_1(t)=2, \sigma_2(t)=3, \sigma_3(t)=1$. 
		}
		\label{fig:diffperm}
	\end{figure}

	\begin{lemma}\label{lem:noswitchunif}
		Under $\overline{P}$, for each fixed $t$, the law of $\sigma(t)$ is uniform. That is, if $\sigma_0 \in \perm_m$, the group of permutations of $m$ labels, then $\overline{P}\left( \sigma(t) = \sigma_0  \right)= {1}/{m!}$.
	\end{lemma}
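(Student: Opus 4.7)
The plan is to reduce this to a standard fact about uniform random permutations on a group: if $\pi$ is uniform on $\perm_m$ and $\gamma$ is a random permutation independent of $\pi$, then $\pi \circ \gamma$ is also uniform on $\perm_m$. This is because the map $\tau \mapsto \tau \circ \gamma_0$ is a bijection of $\perm_m$ for every fixed $\gamma_0 \in \perm_m$, so the uniform measure is invariant under right translation.

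First I would make the independence structure explicit. By construction of the enlarged space $\left( \overline{\Omega}, \{\overline{\mathcal{F}}_t\}, \overline{P}\right)$, the shadow index $\pi$ was sampled at time zero from the uniform distribution on $\perm_m$ independently of the $\sigma$-field $\mathcal{F}_\infty$ carrying the site picks and coin tosses that drive $X$. The extra uniform $[0,1]$ random variables used for tie-breaking in the definition of $\gamma(t)$ can be taken independent of $\pi$ as well (they only enter the rank-to-index construction, which is a function of the $X$-process plus auxiliary uniforms). Hence $\pi$ is independent of $\gamma(t)$ for every fixed $t \geq 0$.

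Next I would compute the distribution of $\sigma(t) = \pi \circ \gamma(t)$ by conditioning on $\gamma(t)$. For any $\sigma_0 \in \perm_m$,
\[
\overline{P}\left(\sigma(t) = \sigma_0\right) = \sum_{\gamma_0 \in \perm_m} \overline{P}\left(\pi \circ \gamma_0 = \sigma_0 \mid \gamma(t) = \gamma_0\right) \overline{P}\left(\gamma(t) = \gamma_0\right).
\]
By independence, the conditional probability equals $\overline{P}(\pi = \sigma_0 \circ \gamma_0^{-1}) = 1/m!$, uniformly in $\gamma_0$. Summing gives $\overline{P}(\sigma(t) = \sigma_0) = 1/m!$, as desired.

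There is no real obstacle here; the lemma is essentially a bookkeeping consequence of the definition of the enlarged probability space and the invariance of the uniform measure on $\perm_m$ under right multiplication. The only thing to be careful about is the random tie-breaking in $\gamma(t)$: one must record that the auxiliary uniforms used there belong to $\mathcal{F}_\infty$ (or at any rate are independent of $\pi$), so that the conditioning argument goes through even on the event that the positions $X_1(t),\ldots,X_m(t)$ have coincidences. This subtlety — which is the reason $\gamma(t)$ was defined as a distribution rather than a deterministic function of $X(t)$ — is what the proof really needs to flag.
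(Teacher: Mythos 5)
Your proposal is correct and takes the same route as the paper: the paper's proof consists of observing that $\pi$ is uniform and independent of $\gamma(t)$ even after random tie-breaking, which is exactly the independence-plus-group-invariance argument you spell out. You have simply made explicit the conditioning step and the bookkeeping around the auxiliary tie-breaking uniforms, which the paper leaves implicit.
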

	\begin{proof}
		The proof is obvious because, under $\overline{P}$, $\pi(t)\equiv\pi$ is a uniformly distributed permutation that is independent of $\gamma(t)$, even if we break ties at random. 
	\end{proof}

	We now define the permutations in the ``Switch'' row. Define a sequence of $\left(\overline{\mathcal{F}}_t\right)$ stopping times $\tau(k)$, sequentially for $k=0,1,2,\ldots$. Initialize by defining $\pi^*(0)=\pi(0)$, $\tau(0)=0$, and
	\begin{enumerate}[(i)]
		\item $\fixed(0) := \left\{ i\in [m]:\; \pi^*_i(0)=i  \right\}$ is the set of fixed points of $\pi^*(0)$. If $\fixed(0)=[m]$, STOP and define $\tau(l)=0$ for all $l=1,2,\ldots,m$. Otherwise define
		\item $\sminx(0) := \min\left\{ i\in [m]:\; i \notin \fixed(0)   \right\}$ and $\smjinx(0) := \pi^*_{\sminx(0)}(0)$.
	\end{enumerate}
	
	Now, sequentially for $k=0,1,2,\ldots$, let 
	\eq\label{eq:taukplusone}
	\tau(k+1)=\min\left\{ t \ge \tau(k):\; X_{\sminx(k)}(t) = X_{\smjinx(k)}(t)   \right\}.
	\en
	Thus $\tau(k+1)$ is the first time after $\tau(k)$ when the card with index $\sminx(k)$, with shadow index $\smjinx(k)$, meets the card with index $\smjinx(k)$. Note that it is possible that $\tau(k+1)=\tau(k)$.
	
	Define the {\ixtoshix} process by 
	\[
	\pi^*(t) = \pi^*\left(  \tau(k) \right), \quad \tau(k) \le t < \tau(k+1). 
	\]
	At $\tau(k+1)$ we swap the shadow indices of cards labeled $\sminx(k)$ and $\smjinx(k)$. That is, define 
	\[
	\begin{split}
		\pi^*_{\smjinx(k)}\left( \tau(k+1)  \right) &:= \smjinx(k),\quad \pi^*_{\sminx(k)}\left( \tau(k+1)  \right) := \pi^*_{\smjinx(k)}\left( \tau(k) \right),\\
		\pi^*_{l}\left( \tau(k+1)  \right) &= \pi^*_l\left( \tau(k) \right), \quad \text{for all other $l\in [m]$}.
	\end{split}
	\] 
	Note that $\smjinx(k)$ now becomes a fixed point of the {\ixtoshix} permutation.
	
	Update the set of fixed points and indices $\sminx$, $\smjinx$.
	\begin{enumerate}[(i)]
		\item $\fixed(k+1) := \left\{ i\in [m]:\; \pi^*_i(\tau(k+1))=i  \right\}\supseteq \fixed(k) \cup \{\smjinx(k) \}$. 
		If $\fixed(k+1)=[m]$, STOP and define $\tau(l)=\tau(k+1)$ for all $l=k+2, \ldots, m$. 
		Otherwise continue with the induction by defining 
		\item $\sminx(k+1) := \min\left\{ i\in [m]:\; i \notin \fixed(k+1)   \right\}$ and $\smjinx(k+1) := \pi^*_{\sminx(k+1)}(\tau(k+1))$.
	\end{enumerate}
	
	It is clear that the process will stop after at most $m$ steps (in fact, in at most $(m-1)$ steps). Once we stop we define $\pi^*(t)= \pi^*\left( \tau(m) \right)$, for all $t \ge \tau(m)$. Note that, for all $t\ge \tau(m)$, $\pi^*(t)$ is the identity map. 
	
	For a worked out example, see the top box in Figure \ref{fig:switching}. We have four cards that are color-coded. Their paths are represented by continuous curves for aesthetic reasons but they could well be RCLL. The $x$-axis represents time and the $y$-axis represents one-dimensional space. The numbers along-side the curves represent the shadow indices and are color-coded by the color of the same index. For example, at time zero, the shadow index of card 1 is $2$; the shadow index $2$ and the path of card $2$ are both marked in blue. Similarly, at time zero, the shadow index of cards $2, 3$, and $4$ are $3$, $4$, and $1$, respectively. The times $\tau(1)$ and $\tau(2)$ are shown. 
	
	$\fixed(0)$ is empty and, thus, $\sminx(0)=1$. the shadow index $\smjinx(0)$ is then $2$ and $\tau(1)$ is defined by the first time cards $1$ and $2$ meet. At this point they exchange shadow indices. Card $2$ (the blue curve) gets shadow index $2$, gets fixed, and carries that shadow index with it till the end. Thus $\fixed(1)=\{2\}$ and $\sminx(1)$ is still $1$. But now $\smjinx(1)=3$. Thus $\tau(2)$ is the first time when cards $1$ and $3$ (represented by the red and purple curves) meet. They exchange shadow indices again and index $3$ becomes a fixed point. Thus, $\fixed(2)=\{2,3\}$, $\sminx(2)=1$, and $\smjinx(2)=4$.
	
	The process stops at $\tau(3)$ when cards $1$ and $4$ (the red and black curves) meet. But this event has not been shown in the figure.

	\begin{figure}[t]
		\includegraphics[width=7.5cm]{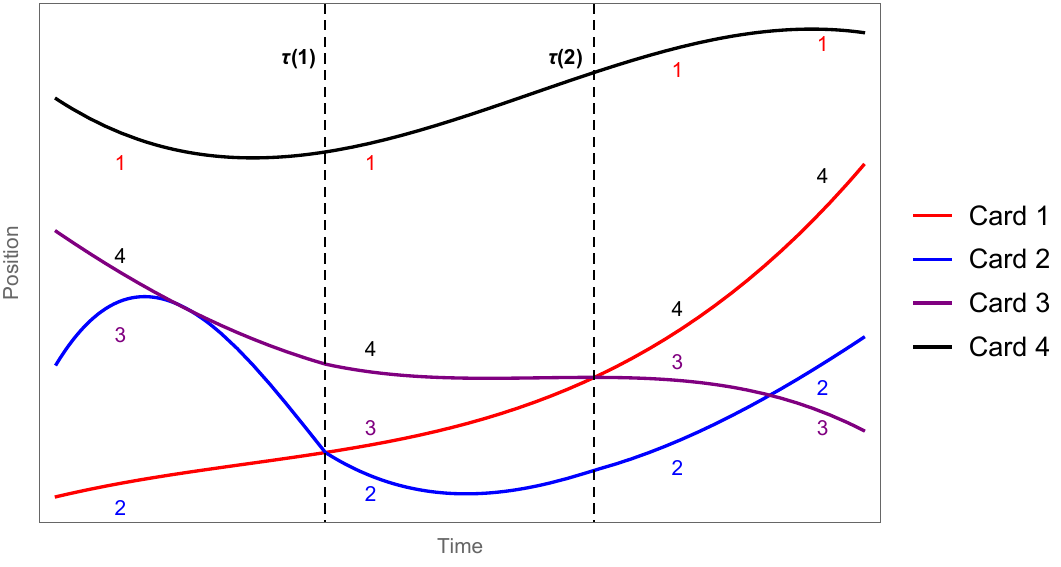}
		\caption{Switching shadow indices (color coded). Compare with Figure \ref{fig:switching2} below.}
		\label{fig:switching}
	\end{figure}
	
	\begin{thm}\label{thm:main}
		Define $\sigma^*(t) = \pi^*(t) \circ \gamma(t)$. 
		Under $\overline{P}$, for any fixed $t$, the law of $\sigma^*(t)$ is uniform on $\perm_m$.
	\end{thm}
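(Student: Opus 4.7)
The plan is to introduce a ``ghost'' process tracking where each shadow index physically sits at time $t$, and show via Lemma \ref{lem:exchangeable} that it is equidistributed with a uniformly relabeled version of $X$. Define
\[
G_j(t) := X_{(\pi^*(t))^{-1}(j)}(t), \qquad j \in [m],
\]
so $G_j$ is the position of the card currently carrying shadow index $j$; by definition of $\tau(k+1)$, any ``hop'' of a ghost between cards occurs between two cards at the same position, producing no physical jump. A direct unpacking of definitions gives the identity $\sigma^*(t) = \gamma^G(t)$, since the card at rank $r$ in $X$ has shadow $\pi^*(t)_{\gamma_r(t)} = \sigma^*_r(t)$, which is precisely the label of the ghost at rank $r$ in $G$ (breaking ties in $G$ by inheriting the tie-breakers of the underlying cards).

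The main step is to show $G \stackrel{d}{=} X^\pi$, where $X^\pi_j(t) := X_{\pi^{-1}(j)}(t)$. I would proceed by induction on the number of swap times in $[0,t]$. Before $\tau(1)$ we have $G = X^\pi$ pathwise. At $\tau(1)$, the hops of ghosts between cards $\sminx(0)$ and $\smjinx(0)$ are equivalent --- by the strong Markov property and Lemma \ref{lem:exchangeable} applied conditionally on $\mathcal{F}_{\tau(1)}$ --- to keeping the ghosts stationary and instead swapping the future paths of those two cards, producing a coupled process $X^{(1)} \stackrel{d}{=} X$; the two operations yield identical ghost trajectories on $[0,\tau(2))$. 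Iterating at $\tau(2), \tau(3), \ldots$ --- and crucially noting that the pairs $(\sminx(k), \smjinx(k))$ are deterministic functions of $\pi$ alone, since the shadow dynamics is purely combinatorial and independent of the $X$-paths --- yields $G \stackrel{d}{=} X^\pi$.

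To conclude, $\gamma^{X^\pi}(t) = \pi \circ \gamma(t) = \sigma(t)$, which is uniform on $\perm_m$ by Lemma \ref{lem:noswitchunif}; hence $\sigma^*(t) = \gamma^G(t) \stackrel{d}{=} \sigma(t)$ is uniform. The main obstacle will be the conditional application of exchangeability at each random stopping time $\tau(k+1)$: Lemma \ref{lem:exchangeable} as stated requires two cards with identical \emph{initial} positions, so we need a strong-Markov extension at $\tau(k+1)$, where the two cards $\sminx(k), \smjinx(k)$ coincide by definition and are thereafter exchangeable in their futures. A secondary bookkeeping subtlety is ensuring the tie-breaking in $G$ is coupled to that of $X$ so that $\sigma^*(t) = \gamma^G(t)$ holds on the nose even at times when several cards share a position, which is handled by attaching each ghost's tie-breaker to the tie-breaker of the card it momentarily rides on.
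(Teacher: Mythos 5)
The core mechanism here — undo each shadow swap by swapping the physical paths at the meeting time, invoking exchangeability (Lemma \ref{lem:exchangeable}) via the strong Markov property, and conclude that the switched-shadow ranking has the same law as the unswitched one — is exactly the paper's. Your ``ghost process'' $G$ is a clean way to package the argument, and the identities $\sigma^*(t)=\gamma^G(t)$ and $\gamma^{X^\pi}(t)=\pi\circ\gamma(t)=\sigma(t)$ are correct; the paper phrases the same content as a probability-preserving path bijection $x\mapsto x^{[1]}$, which in your language is exactly the statement $G(x)=\left(x^{[1]}\right)^{\pi}$ pathwise.

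There is, however, a concrete gap in the forward iteration. Passing from $X^{(1)}$ to $X^{(2)}$, your emphasis that $(\sminx(k),\smjinx(k))$ depend on $\pi$ alone strongly suggests you intend to swap cards $\sminx(1)$ and $\smjinx(1)$ of $X^{(1)}$ at $\tau(2)$. But those two cards are generally \emph{not} at the same place in $X^{(1)}$ at time $\tau(2)$: the time $\tau(2)$ was chosen so that $X_{\sminx(1)}(\tau(2))=X_{\smjinx(1)}(\tau(2))$ in the \emph{original} process, whereas after the first path swap $X^{(1)}_{\sminx(1)}(\tau(2))$ equals $X_{\smjinx(0)}(\tau(2))$, which bears no relation to $X_{\smjinx(1)}(\tau(2))$. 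So Lemma \ref{lem:exchangeable} (even in its strong-Markov form) cannot be applied to that pair. The correct pair to swap in $X^{(k-1)}$ at $\tau(k)$ is the pair of cards that, under pure $\pi$-relabeling of $X^{(k-1)}$, currently carry the two hopping ghost indices, namely $\pi^{-1}\!\left(\smjinx(k-1)\right)$ and $\pi^{-1}\!\left(\pi^*_{\smjinx(k-1)}(\tau(k-1))\right)$; one must then verify inductively that these \emph{do} coincide at $\tau(k)$, which holds precisely because $(X^{(k-1)})^\pi$ agrees with the partial ghost $G^{(k-1)}$ on $[0,\tau(k)]$. The paper sidesteps this bookkeeping entirely by performing the path swaps in \emph{reverse} order, from $\tau(k)$ down to $\tau(1)$: each swap perturbs the path only strictly after that swap time, so every earlier meeting is undisturbed and at each step the two cards $\sminx(j-1),\smjinx(j-1)$ to be swapped are at the same (original) position. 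If you keep the ghost framing with a forward induction you must track the corrected card labels as above; otherwise reverse the order of swaps as the paper does.
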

	
	Our proof goes by showing that, for each $t$, $\sigma^*(t)$ has the same law (but, of course, different realizations) as $\sigma(t)$. This will be shown by explicitly showing a measure preserving bijections between paths and using Lemma \ref{lem:exchangeable}. 
	
	Before doing that let us see how to bound the mixing time of shuffling. 
	On the sample space $\left( \overline{\Omega}, \left\{\overline{\mathcal{F}}_t\right\}, \overline{P}  \right)$, we have a coupling of $\gamma(t)$ and a uniformly distributed permutation $\sigma^*(t)$, which are identical for all $t\ge \tau(m)$ since $\pi^*(t)$ is the identity. This gives us the total variation bound
	\eq\label{eq:coupbnd}
	\norm{\gamma(t) - \mathrm{Uni}}_{\mathrm{TV}} \le \overline{P}\left( \tau(m) > t  \right).
	\en
	
	Let us estimate each $\tau(k+1)-\tau(k)$. Condition on $\overline{\mathcal{F}}_{\tau(k)}$. $X_{\sminx(k)}(t)$ is a lazy reflected symmetric random walk on $[N]$. For $2\le j\le N-1$, 
	\eq\label{eq:lrw}
	\begin{split}
		\Prob\left( X_{\sminx(k)}(t+1)= j\pm 1 \mid  X_{\sminx(k)}(t)= j\ \right)&=\frac{p}{2N}, \quad \text{and}\\
		\Prob\left( X_{\sminx(k)}(t+1)= j \mid  X_{\sminx(k)}(t)= j\ \right)&=1-\frac{p}{N}.
	\end{split}
	\en
	For $j=1$, $X_{\sminx(k)}(t)$ moves to $2$ with probability $p/(2N)$ and, for $j=N$, $X_{\sminx(k)}(t)$ moves to $N-1$ with probability $p/(2N)$. 
	
	Suppose $X_{\sminx(k)}(\tau(k)) \le X_{\smjinx(k)}(\tau(k))$, then $\tau(k+1)-\tau(k)$ is smaller than the time it takes for $X_{\sminx(k)}$ to hit $N$, starting from one. This follows from the strong Markov property. By a similar logic, if $X_{\sminx(k)}(\tau(k)) \ge X_{\smjinx(k)}(\tau(k))$, then $\tau(k+1)-\tau(k)$ is smaller than the time it takes for $X_{\sminx(k)}$ to hit $1$, starting from $N$. 
	
	Let $\zeta^*$ denote a random variable with the same distribution as the hitting time of $N$ for the lazy reflected random walk in \eqref{eq:lrw}, starting from $1$. By symmetry, this is also the distribution of the hitting time of $1$ for the process starting from $N$. 
	
	\begin{thm} 
		The mixing time of shuffling for the one-dimensional model of smooshing is bounded above by $C N^3m/p$ for some absolute constant $C$. 
	\end{thm}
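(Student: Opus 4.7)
The plan is to combine the coupling bound \eqref{eq:coupbnd} coming from Theorem \ref{thm:main} with a classical hitting-time estimate for a single lazy reflected random walk on $[N]$. Since $\norm{\gamma(t) - \uni}_{\TV} \le \overline{P}(\tau(m) > t)$, it suffices to produce $t$ of order $mN^3/p$ for which the right-hand side is at most $1/4$, which I will do by bounding $\E[\tau(m)]$ and applying Markov's inequality.

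To bound $\E[\tau(m)]$, I decompose $\tau(m) = \sum_{k=0}^{m-1} (\tau(k+1) - \tau(k))$ and argue that each increment is stochastically dominated by $\zeta^*$. Conditioning on $\overline{\mathcal{F}}_{\tau(k)}$, the single particle $X_{\sminx(k)}$ evolves (marginally) as the lazy reflected symmetric random walk on $[N]$ with the transition probabilities \eqref{eq:lrw}, so it suffices to show that the meeting time between $X_{\sminx(k)}$ and $X_{\smjinx(k)}$ is at most the time for $X_{\sminx(k)}$ to hit the opposite endpoint. Here I use the crucial feature of the update rule: at each discrete time step exactly one site is selected, hence at most one of the two relevant cards can move, and consequently the integer-valued process $X_{\sminx(k)}(\cdot) - X_{\smjinx(k)}(\cdot)$ changes by at most $1$ per step between $\tau(k)$ and $\tau(k+1)$. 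It therefore cannot change sign without passing through $0$; so if, say, $X_{\sminx(k)}(\tau(k)) \le X_{\smjinx(k)}(\tau(k))$, the meeting must occur no later than the first time $X_{\sminx(k)}$ reaches $N$. Monotonicity of this hitting time in the starting point then gives domination by $\zeta^*$.

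Next I estimate $\E[\zeta^*]$ by $O(N^3/p)$ via a two-scale argument. The lazy chain is a time change of the non-lazy simple symmetric random walk on $\{1,\ldots,N\}$ reflected at one endpoint and absorbed at the other, where the interarrival between true moves is geometric with mean $O(N/p)$. A standard harmonic-function/gambler's-ruin computation solves $h(x) = 1 + \tfrac{1}{2}h(x-1) + \tfrac{1}{2}h(x+1)$ with $h(N)=0$ and reflecting boundary at $1$, giving $h(1) = (N-1)^2$ true moves on average. Wald's identity (applied to the sum of geometric waiting times up to the geometric number of moves) yields $\E[\zeta^*] \le C' N^3/p$ for an absolute constant $C'$.

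Putting these together, $\E[\tau(m)] \le m\,\E[\zeta^*] \le C' m N^3/p$, and Markov's inequality gives $\overline{P}(\tau(m) > 4 C' m N^3 /p) \le 1/4$, which establishes the mixing-time bound with $C = 4C'$. None of the steps is technically demanding; the only mildly delicate point is the stochastic-domination claim for each $\tau(k+1) - \tau(k)$, which relies on the single-site-per-step dynamics preventing the two tracked cards from swapping without first colliding. The hitting-time calculation in the final step is classical and the reduction to $\zeta^*$ is exactly the reason the hitting-time estimate transfers cleanly from a single-particle chain to the full $m$-particle coupling.
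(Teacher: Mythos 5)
Your proposal is correct and follows essentially the same route as the paper: bound $\norm{\gamma(t)-\uni}_{\TV}$ by $\overline{P}(\tau(m)>t)$, dominate each increment $\tau(k+1)-\tau(k)$ by the hitting time $\zeta^*$ of the lazy reflected walk, estimate $\E[\zeta^*]=O(N^3/p)$, and apply a tail bound (you use Markov's inequality where the paper invokes the weak law of large numbers, but these are interchangeable here). Your explicit observation that only one site is selected per step — so the two tracked cards cannot swap without colliding — is the justification the paper leaves implicit, and it is the right one.
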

	
	\begin{proof} Recall the notion of stochastic domination. For a pair of real-valued random variables, $U,V$, we say that $U$ is stochastically dominated (or, stochastically ordered) by $V$, denoted by $U\preceq V$, whenever $\Prob\left( U > t   \right) \le \Prob\left(  V > t \right)$, for all $t\in \rr$. It is convenient to remember the fact that when $U \preceq V$, there is a coupling of $(U,V)$ on a common probability space such that $U \le V$, almost surely.
		
		Starting from \eqref{eq:lrw}, it is clear from our argument so far that $\tau(m)$ is stochastically dominated by the sum $\zeta_1+\zeta_2+\ldots + \zeta_m$, where the $\zeta_i$s are i.i.d. and has the same distribution as $\zeta^*$. Thus
		\eq\label{eq:tailbnd}
		\norm{\gamma(t) - \mathrm{Uni}}_{\mathrm{TV}} \le \Prob\left( \zeta_1+\zeta_2+\ldots + \zeta_m >t  \right).
		\en
		Now, if the random walker had not been lazy, it would have taken $O(N^2)$ steps to hit $N$, starting from $1$. Since the lazy walker only moves $O(p/N)$ fraction of time points, it would take $O(N^3/p)$ time steps for the lazy walker to hit $N$, starting from $1$. Hence, $\zeta_1+\zeta_2+\ldots + \zeta_m$ is $O\left( N^3m/p  \right)$. By the weak law of large numbers, it is not hard to show that the tail probability in \eqref{eq:tailbnd} is small when $t=C N^3m/p$. 
	\end{proof}

	\begin{figure}[t]
		\includegraphics[width=7.5cm]{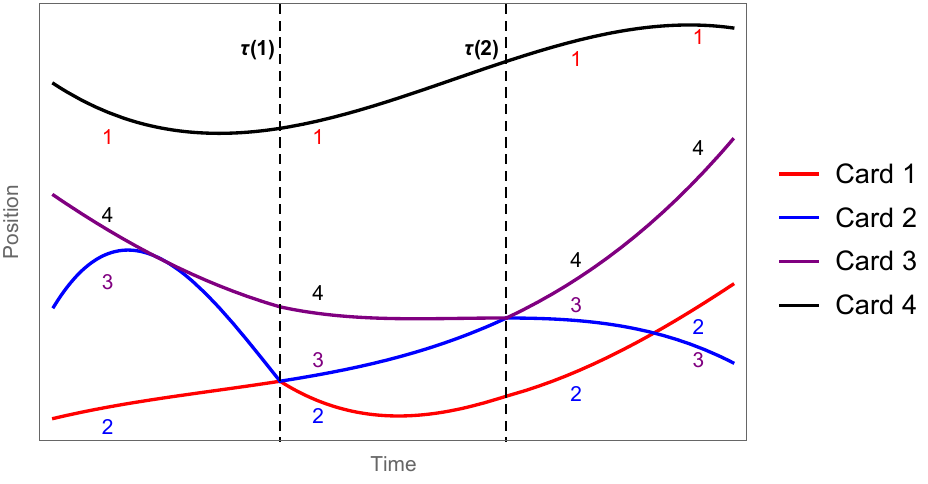}
		\caption{Reverse switching shadow indices (color coded). Compare with Figure \ref{fig:switching}.}
		\label{fig:switching2}
	\end{figure}
	
	\begin{proof}[Proof of Theorem \ref{thm:main}] As mentioned before, our proof is bijective. More precisely, fix any $\sigma_0 \in \perm_m$ and $t\ge 0$ and consider the two events
		\[
		E_1:=\left\{ \sigma^*(t)=\sigma_0  \right\} \quad \text{and}\quad E_2:=\left\{ \sigma(t)=\sigma_0   \right\}.
		\] 
		We will show that $\overline{P}(E_1)=\overline{P}(E_2)$ by showing that for every realization of the initial shadow index $\pi$ and a path of $m$ point motion $\left( x_1(s),\ldots, x_m(s)    \right)$, $s=0,1,2,\ldots$ that is contained in $E_1$, there is another path of $m$ point motion with the same probability of occurrence contained in $E_2$ for the same $\pi$, and vice versa. 
		\bigskip
		
		\noindent\textbf{Case 1.} Suppose $0 \le t \le \tau(1)$. In this case we have nothing to do. 
		The path $\left( x_1(s), \ldots, x_m(s)    \right)$, $s=0,1,2,\ldots,t$, is in $E_2$ and has trivially the same probability.  
		\bigskip
		
		\noindent\textbf{Case 2.} Suppose $\tau(1) \le t \le \tau(2)$. If $\tau(1) < t \le \tau(2)$, for $\tau(1)< s\le t$, switch the paths of $x_{\sminx(0)}$ and $x_{\smjinx(0)}$. That is, for $\tau(1)< s\le t$, define 
		\[
		\begin{split}
			x^{[1]}_{\sminx(0)}(s) := x_{\smjinx(0)}(s),\quad x^{[1]}_{\smjinx(0)}(s) := x_{\sminx(0)}(s),\quad x^{[1]}_{l}(s) := x_l(s),\; \text{for all other $l$}. 
		\end{split}
		\]
		Also, 
		\[
		x_i^{[1]}(s) := x_i(s), \quad 0\le s \le \tau(1), \quad \text{for all $i\in [m]$}. 
		\]

		Then, by the Markov property and Lemma \ref{lem:exchangeable}, the original path given by $\left( x_1(\cdot), \ldots, x_m(\cdot)  \right)$ has the same probability as the path $\left( x^{[1]}_1(\cdot), \ldots, x^{[1]}_m(\cdot)   \right)$. 
		However, notice that the latter path is in $E_2$. This is because switching the indices of the two paths cancels the effect of switching the shadow indices at $\tau(1)$. It is as if we have not switched shadow indices at all. If $\tau(1)=\tau(2)$, do this change only at $s=\tau(2)$.
		\bigskip
		
		The general case is now clear. Inductively for $k=2,3,\ldots$ perform the following.

		\noindent\textbf{Case $(k+1)$.} Suppose $\tau(k)< t \le \tau(k+1)$. After $\tau(k)$ we switch the future paths of $x_{\sminx(k-1)}$ and $x_{\smjinx(k-1)}$. That is, for $\tau(k)< s\le t$, define 
		\[
		x^{[k]}_{\sminx(k-1)}(s) := x_{\smjinx(k-1)}(s),\; x^{[k]}_{\smjinx(k-1)}(s) := x_{\sminx(k-1)}(s),\; x^{[k]}_{l}(s) := x_l(s),\; \text{for all other $l$}. 
		\]
		For $0\le s\le \tau(k)$ leave the paths unchanged for all coordinates. If $\tau(k)=\tau(k+1)$, do this change at $s=\tau(k+1)$.
		
		Now sequentially define the sequence of paths $x^{[k-i]}$, $i=1,2,\ldots,k-1$, by switching only the future paths of $x^{[k-i+1]}_{\sminx(k-i-1)}$ and $x^{[k-i+1]}_{\smjinx(k-i-1)}$ after $\tau(k-i)$. 
		
		By repeated use of the strong Markov property and Lemma \ref{lem:exchangeable}, the original path has exactly the same probability as each of the paths
		\[
		\left( x^{[k-i]}_1(s), \ldots, x^{[k-i]}_m(s)   \right), \; s=0,1,2,\ldots, t, \quad i=1,2,\ldots,k-1.
		\]
		As before, we have reversed the switching of shadow indices at each step by switching the indices of the paths themselves. 
		Hence the path $x^{[1]}$ lies in $E_2$.
		
		A visual demonstration can be found in the bottom box of Figure \ref{fig:switching}. Recall the discussion of the setting in the top box described right above Theorem \ref{thm:main}. The top box shows a path in $E_1$ for $\sigma_0$ given by the permutation $1\mapsto 3$, $2\mapsto 2$, $3\mapsto 4$, and $4\mapsto 1$ for some terminal $t \in (\tau(2), \tau(3))$. Thus $k=2$. We have made two switching of colors to get to the bottom box from the top: once at $\tau(2)$, when the future paths of the red and purple paths switched colors (this is not shown in the figure); and then, backwards, at $\tau(1)$, when the red and the blue colors switch future paths. Notice that the purple path between $[\tau(2), t]$ switch colors twice, first to red (not shown) and then to blue (shown in the bottom box). Ultimately, in the bottom box, every card carries the same shadow index along its path that had been assigned at time zero. This is a sample path in $E_2$ that has the same probability.  
	\end{proof}
	
	A variation of this coupling method can often make things faster. Consider the definition of $\tau(k+1)$. Redefine it so that $\tau(k+1)$ is the first time after $\tau(k)$ \textit{any} index $i \notin \fixed(k)$ meets index $j$, where $j=\pi^*_i(\tau(k))$. That is, $\tau(k+1)$ is the first time after $\tau(k)$ when we get an opportunity to increase the size of $\fixed(k)$. Continue until $\pi^*$ is the identity permutation. Call the time it takes to become identity to be the \textit{parallel} coupling time as opposed to the \textit{sequential} coupling time described below Lemma \ref{lem:noswitchunif}. One can check that \eqref{eq:coupbnd} continues to hold with a very similar proof. Obviously, parallel coupling time is smaller. It can be significantly smaller as seen in the classical example of the random transposition chain. Imagine $n$ cards arranged in a row. At each time step two cards are chosen at random (out of $n(n-1)/2$ possibilities) and are transposed with probability $1/2$. It is clear that a pair of cards become exchangeable once they are chosen to be transposed (even if they are not). It is not hard to see that the modified version of shadow index coupling gives a coupling time of $O(n^2\log n)$ which is close to the best coupling bound of $O(n^2)$, but far from the actual bound of $\frac{1}{2}n\log n + O(n)$. We do not use the parallel coupling time in this paper since we do not know how to bound it, except via the sequential coupling time. 
	
	To see the generality of the method of shadow indices, consider the following generalization to a large class of non-trivial examples. Recall the mathematical model of riffle shuffle suggested by Shannon and Gilbert, and Reeds. See \cite[Chapter 4]{Diaconis88} for many details of its history and analysis. Suppose we have a deck of $n$ cards. Cut the $n$ card deck according to a binomial distribution with parameters $(n,1/2)$. Suppose $k$ cards are cut off and held in the left hand and $n-k$ cards are held in the right hand. Drop cards with probability propositional to packet size. Thus, the chance that a card is dropped first from the left is $k/n$, and so on. Call this method of shuffling the two piles a GSR shuffle.

	Imagine a finite graph, say a path, to be specific. Start with at most one card per site, cards are labeled by $1,2,\ldots,m$. Thus the number of vertices in the graph is at least $m$. Each time, pick a non-empty pile with a probability depending on the vertex at which the pile sits. For example, it could be picked uniformly among the non empty piles. From that pile, pick a number of cards $j$ in some random (or deterministic) way. Take the top $j$ card off the chosen pile. Next, choose a direction, in some way. For example, towards one of the neighbors of the current vertex, equally likely. Move the top j cards to the adjacent pile in that direction and \textit{GSR shuffle them into the already existing pile, if any}. This is enough for the coupling by shadow permutations to go through. The key observation being: If two packets are each in random relative order and we GSR shuffle them together, the resulting combined packet is in random relative order. Hence, the mixing time is $O(m)$, where the constant in the order can be calculated from the distribution of the time it takes two cards to belong to the same pile. 
}

\subsection{A general bound for a triangular array of mutidimensional models}\label{sec:discrete} 
Although the {\gands} model is a two-dimensional model, all such fluid-dynamical models, irrespective of dimension, can be studied by a key consistency property that is described below.   

Throughout this paper, RCLL will refer to functions from $[0,\infty)\rightarrow \rr$ that are right continuous on $[0, \infty)$ and admit left limit at every point in $(0,\infty)$. For every $m\in \NN$, suppose that on a right-continuous and complete filtered probability space we have a strong Markov process $Z^{(m)}(t)=\left(Z^{(m)}_1(t), \ldots, Z^{(m)}_m(t)\right)$, $t\ge0$, where each component process has state space $\rr^d$, for some $d\ge 1$, and RCLL paths. In particular, we allow discrete time Markov processes by extending them to continuous time by piecewise constant interpolation. Let $Q_m(z_1, \ldots, z_m)$ denote the process law starting from the initial vector $(z_1, \ldots, z_m)$. At time $t$ extract the ``$x$-coordinates'' (which can be any of the $d$ coordinates) $X^{(m)}_1(t), \ldots, X^{(m)}_m(t)$ and consider the {\rtoix} permutation $\gamma^{(m)}(t)$. We are interested in the mixing time of shuffling for this permutation.

\begin{asmp}\label{asmp:exchangeable2}(\textbf{Dimension consistency.}) We say that the family of models $Q_m$, $m\in \NN$, satisfies the dimension consistency property if the following holds true. For any $m\in \NN$, let $Z^{(m)}$ be distributed according to $Q_m(z_1, \ldots, z_m)$. Then, the marginal distribution of the process $\left( Z^{(m)}_i, Z^{(m)}_j\right)$, for any $i\neq j$, is $Q_2(z_i, z_j)$. 
\end{asmp}

Note that the $m$ point motion under the {\gands} model in Definition \ref{defn:smooshingcards} satisfy this  property. In fact, recall the comment on the structural similarities between a fluid mechanics model and our card shuffling model as remarked in Section \ref{sec:review}. Assumption \ref{asmp:exchangeable2} will be valid for all such models.

For $x,y\in \rr^d$, let $\overline{\tau}^{(m)}_{x,y}$ denote the stopping time
\eq\label{eq:whatistaubar}
\overline{\tau}^{(m)}_{x,y}=\inf\left\{ t\ge 0:\; Z^{(m)}_i\left( t  \right)= Z^{(m)}_j(t) \right\}, \quad Z^{(m)}_i(0)=x,\; Z^{(m)}_j(0)=y.
\en
Note that $\overline{\tau}^{(m)}_{x,y}$ is indeed a stopping time due to our assumptions on the filtration. By Assumption \ref{asmp:exchangeable2}, its distribution under $Q_m(z_1, \ldots, z_m)$ does not depend on $m$ or $z_k$, $k\notin \{i,j\}$, and we will drop the $m$ from its notation and denote it by $\overline{\tau}_{x,y}$. 

\begin{asmp}\label{asmp:dominance}
	There is a positive random variable $\zeta^*$ such that $\overline{\tau}_{x,y}$ is stochastically dominated by $\zeta^*$ irrespective of $x,y$. 
\end{asmp}

\begin{thm}\label{thm:mixdiscrete}
	Suppose Assumptions \ref{asmp:exchangeable2} and \ref{asmp:dominance} hold. Then, there exists a positive constant $C$ such that for all $x,y \in \rr^d$ and all $m \in \NN$,   
	\eq\label{eq:mixdiscrete}
	P\left( \overline{\tau}_{x,y} > t \right) \le C 2^{-t}, \quad \text{for all $t\ge 0$}. 
	\en
	Moreover, if there is an $\alpha >0$ such that 
	\eq\label{eq:exptail}
	\liminf_{u\rightarrow 0+} \frac{1}{u} P\left( \zeta^* \le u \right) \ge \alpha,
	\en
	e.g. when $\zeta^*$ has a positive density $\alpha$ at zero, then every $\overline{\tau}_{x,y}$ is stochastically dominated by an exponential random variable with rate $\alpha$. 
	The mixing time for shuffling is $O(\log m)$ as $m\rightarrow\infty$, in particular, when \eqref{eq:exptail} holds, $\tmix(\epsilon)=\frac{1}{\alpha} \log(m/\epsilon )$.
\end{thm}

\begin{proof} Fix $m$. Let $G(t)=P\left( \zeta^* > t\right)=1 - P(\zeta^* \le t)$. 
	As in Section \ref{sec:warmup}, proceed by applying the strong Markov property: for any $t, s >0$, 
	\[
	P\left( \overline{\tau}^{}_{x,y} > t + s\right) \le P\left( \overline{\tau}^{}_{x,y} > t \right) G(s) \le G(t) G(s). 
	\] 
	Exactly as before \eqref{eq:mixdiscrete} follows. For the stronger conclusion \eqref{eq:exptail}, note that for a rational $t=k/n$, iterating the above gives us 
	\[
	P\left(\overline{\tau}^{}_{x,y} > k/n \right)\le \left(G(1/n) \right)^k=\left( 1 - P(\zeta^*\le 1/n) \right)^{nt}. 
	\]
	For any $0< \delta < \alpha$, if $n$ is taken large enough, by \eqref{eq:exptail}, $P(\zeta^*\le 1/n)\ge (\alpha-\delta)/n$. Thus
	\[
	P\left(\overline{\tau}^{}_{x,y} > t \right)\le \left( 1- \frac{\alpha-\delta}{n} \right)^{nt} \rightarrow e^{-(\alpha -\delta ) t},
	\]
	as $n\rightarrow \infty$. Now take $\delta \rightarrow 0+$ to get $P\left(\overline{\tau}^{}_{x,y} > t \right)\le \exp\left( -\alpha t \right)$ for all rational $t>0$. For irrational $t$, the inequality follows by right-continuity of the distribution function of $\overline{\tau}_{x,y}$. This proves that $\overline{\tau}^{}_{x,y}$ is dominated by an exponential$(\alpha)$ random variable. 
	
	The claim about mixing time of shuffling being $O(\log m)$ follows as before. Let us argue the special case when \eqref{eq:exptail} holds. In that, by stochastic domination by an exponential($\alpha$) variable, we have $Q_2(x,y)\left( \tau_{xy} > t \right) \le e^{-\alpha t}$. 
	
	Hence, as in the case of the toy one-dimensional model described in the last subsection, start now with $2m$ cards $(X, X')$ where the indices of $X'$ are assigned uniformly at random, independent of the rest. Let $\tau_i$ be the coupling time of cards $X_i$ and $X_i'$, and, let $\tau^*=\max_{i\in [m]} \tau_i$. One gets from the union bound 
	\[
	\norm{X(t) - X'(t)}_{\mathrm{TV}} \le P\left( \tau^* > t \right)\le me^{-\alpha t}.
	\]
	By equating the RHS to be $\epsilon$ and considering the pushforward to the rank-to-index permutations of both processes, as before, we get $\tmix(\epsilon)=\frac{1}{\alpha} \log(m/\epsilon )$.
\end{proof}

The mixing time of shuffling for the $m$ point motion under the {\gands} model would be $O(\log m)$ if a $\zeta^*$ exists. The difficulty in estimating the exact constant $\alpha$ is the boundary of the square table. The longer the cards stay at the boundary, the slower is the mixing. To do a finer analysis and working towards an invariance principle, we take a diffusion limit of the $m$-point motion. The jumps in the gathering are made rare and the spreads are made small and frequent. As we show in the next section, the sequence of processes converges to a jump-diffusion limit. This jump-diffusion spends a negligible amount of time at the boundary (a Lebesgue null set), and allows us to do a more precise estimate of the constants.

\section{The jump-diffusion limit of the {\gands} model}\label{sec:diffusionlimit} 

It is hard to get a precise estimate on the missing constant in the $O(\log m)$ mixing time bound above. In this section we treat a continuum jump-diffusion limit on the $m$ point motion of the lazy {\gands} model \eqref{eq:gands2} to estimate that constant by explicit computations facilitated by stochastic calculus.  

Consider a sequence of discrete {\gands} models with a corresponding sequence of parameters $\lambda^{(n)}=n$ and $s^{(n)}_0= 1/\sqrt{n}$, as $n\in \NN$ tends to infinity while keeping every other parameter ($U$, $\nu_0$, $0< p < 1$, initial values etc.) fixed. 

Recall the modification to Definition \ref{defn:smooshingcards} as given in \eqref{eq:gands2}. Let $(t_i, \theta_i,w_i,\; i\in \NN)$ and $\left(  H_j(i),\; j\in [m],\; i \in \NN \right)$ be as in Definition \ref{defn:smooshingcards}. Starting with $t_0=0$, at each $t_i$, toss a coin with probability of heads given by $1/\lambda=1/n$. If the coin turns heads, define $Z^{(n)}_j(t_i)$ for every $j\in [m]$ according to \eqref{eq:gands}. Otherwise, define
\eq\label{eq:updateti}
Z^{(n)}_j(t_i):= H_j(i) f_{1/\sqrt{n}}^{w_i, \theta_i}\left( Z^{(n)}_j(t_{i-1})  \right) + \left(1 - H_j(i)\right)Z^{(n)}_j(t_{i-1}) ,\; i\in \NN, \; j\in [m].
\en
Extending to all time periods as in \eqref{eq:rcllext} gives us a sequence of $m$ point motions
\[
Z^{(n)}:=\left(Z_j^{(n)}(t)=\left( X_j^{(n)}(t), Y^{(n)}_j(t)   \right),\; j \in [m],\; t\ge 0 \right),
\]
We are interested in the limit of this process as $n$ tends to infinity. 

Let $\varrho^{(n)}$ denote the first time we gather, i.e., the first time a coin with probability $1/n$ turns up heads. 
By the Poisson thinning property, this is distributed as a rate one exponential random variable (irrespective of $n$) and is independent of the process $Z^{(n)}(t)$, $0 \le t < \varrho^{(n)}$.

Consider the joint law of $\varrho^{(n)}$ and $Z^{(n)}(t)$, $0 \le t < \varrho^{(n)}$. As $n\rightarrow \infty$, we will show that the process $Z^{(n)}$ converges in law in the usual Skorokhod space to a continuous diffusion which is strong Markov, stopped at an independent exponential one time. This is enough for our purpose since by restarting the diffusion from a different initial condition (as dictated by the gather) at this random time gives us a limiting jump-diffusion. 

In order to describe the limiting (unstopped) diffusion we abuse our notations and assume that $Z^{(n)}$ is updated at every time $t_{i}$ by \eqref{eq:updateti} (without the gather) and take a diffusion limit. By an abuse of notation we continue to refer to this process (without the gather) in this section by $Z^{(n)}$ while keeping in mind that the process is observed only till an independent exponential time. 

Notice the following properties of the trajectories of cards under the spread moves (and no gather). Because of Assumption \ref{asmp:unbiased}, the increments of every card has mean zero, if it stays within the interior of the table after the spread. Also, if two cards are under the palm and they both decide to move with the palm, then their increments are positively correlated. However, if they are not under the palm, then the increment in one is independent of the other. This is captured by the fact that the limiting diffusion has zero drift (in the interior of the square) and a diffusion matrix given below. 

Define the $2\times 2$ positive-definite matrix
\[
\Sigma= \begin{bmatrix} \sigma^2 & 0 \\ 0 & \sigma^2 \end{bmatrix} = \begin{bmatrix}  \nu_0\left( \sin^2(\theta) \right) &  \nu_0\left( \sin(\theta)\cos(\theta)  \right)   \\   \nu_0\left( \sin(\theta)\cos(\theta)  \right) & \nu_0\left( \cos^2(\theta)  \right) \end{bmatrix}.   
\]
Note $\Ar{U}=\pi \delta^2$. Let $F$ be the $m \times m$ symmetric, positive definite matrix given by 
\eq\label{eq:whatisF}
F_{ij}( z_1, \ldots, z_m) = \begin{cases}
	p^2 \Ar{\{z_i + U \} \cap \{ z_j + U \}},& \quad  i,j\in [m], \; i\neq j.\\
	p \Ar{\{ z_i + U \}} = p \pi \delta^2, & \quad i=j \in [m].
\end{cases}
\en

We skip the proof of the following elementary fact. 

\begin{lemma}\label{lem:areaint} Let $z_1, z_2$ be two arbitrary points on the plane. Then 
	\[
	\Ar{\{z_1 + U \} \cap \{ z_2 + U \}}= \varphi\left( \norm{z_1-z_2}\right)
	\]
	where $\varphi:[0,\infty)\rightarrow [0, \infty)$ is given by
	\[
	\varphi(r):= \begin{dcases}
		2 \delta^2 \arccos\left( \frac{r}{2\delta}  \right) - \frac{r}{2}\sqrt{4\delta^2 - r^2},& \; \text{for $r\le 2\delta$},\\
		0,& \; \text{otherwise}. 
	\end{dcases}
	\]
	In particular, $\varphi$ is a decreasing convex function on $(0, \infty)$. 
\end{lemma}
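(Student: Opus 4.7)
The plan is to reduce to a one-parameter problem by translation/rotation invariance, compute the intersection area via the classical lens formula, and then verify monotonicity and convexity by a direct derivative calculation.

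First I would note that the set $\Ar{\{z_1+U\} \cap \{z_2+U\}}$ depends only on $r := \norm{z_1-z_2}$, since both translation and rotation of the plane preserve Lebesgue measure and can be used to place $z_1$ and $z_2$ symmetrically about the origin on the $x$-axis, say $z_1=(-r/2,0)$ and $z_2=(r/2,0)$. When $r \ge 2\delta$, the two open discs of radius $\delta$ are disjoint (sharing at most a single point on their boundaries), so the intersection has area zero, matching the definition of $\varphi$.

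For $0 < r < 2\delta$ the intersection is a symmetric lens whose axis of symmetry is the perpendicular bisector of $\overline{z_1z_2}$, i.e.\ the $y$-axis. This axis partitions the lens into two congruent circular segments, each cut from a disc of radius $\delta$ by a chord at perpendicular distance $r/2$ from the center. A standard sector-minus-triangle computation gives the area of one such segment as $\delta^2 \arccos(r/(2\delta)) - (r/4)\sqrt{4\delta^2 - r^2}$; doubling yields exactly $\varphi(r)$. This establishes the formula.

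For the monotonicity/convexity claim, on the open interval $(0,2\delta)$ a direct differentiation (the $\arccos$ term and the product term combine nicely) gives
\[
\varphi'(r) = -\sqrt{4\delta^2 - r^2}, \qquad \varphi''(r) = \frac{r}{\sqrt{4\delta^2 - r^2}},
\]
so $\varphi$ is strictly decreasing and strictly convex there. On $[2\delta,\infty)$ the function is identically $0$, and since $\varphi'(r) \to 0$ as $r \uparrow 2\delta$, the function $\varphi$ is $C^1$ across $r=2\delta$, which together with convexity on each side gives convexity on all of $(0,\infty)$.

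There is no real obstacle here: the only minor care needed is checking the gluing at $r=2\delta$, to ensure convexity is not destroyed by a kink there, and making sure the standard lens formula is reproduced with the right constants. Both are handled by the derivative computation above.
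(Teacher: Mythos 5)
Your proof is correct. The paper explicitly omits the proof of this lemma (``We skip the proof of the following elementary fact''), so there is no paper argument to compare against; your circular-segment decomposition plus the clean derivative identity $\varphi'(r) = -\sqrt{4\delta^2 - r^2}$ on $(0,2\delta)$, with the $C^1$ gluing at $r=2\delta$ giving a non-decreasing derivative on all of $(0,\infty)$, is exactly the standard verification the authors had in mind.
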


Let $B(z_1, \ldots, z_m)$ be the $2m\times 2m$ matrix Kronecker product $F \otimes \Sigma$. See \cite[Definition 4.2.1]{HJ94} for the definition. Here and throughout we label the rows (and columns) of the diffusion matrix by assigning the $(2i-1)$th row to $x_i$ and $2i$th row to $y_i$, successively for $i=1,2,\ldots,m$. In particular, the block of $B$ corresponding to $z_i=(x_i,y_i)$ is a $2\times 2$ matrix given by $p\pi \delta^2 \Sigma$; the block corresponding to $(z_i=(x_i,y_i),z_j=(x_j,y_j))$th is a $4\times 4$ matrix given by 
\eq\label{eq:twobytwo}
\begin{bmatrix}
	p\pi \delta^2 \sigma^2 & 0 & F_{ij} \sigma^2 & 0\\
	0 & p\pi \delta^2 \sigma^2 & 0 & F_{ij} \sigma^2\\
	F_{ij} \sigma^2 & 0 & p\pi \delta^2 \sigma^2 & 0\\
	0 &  F_{ij} \sigma^2 & 0 & p\pi \delta^2  \sigma^2
\end{bmatrix}.
\en

By \cite[Theorem 4.2.12]{HJ94}, the eigenvalues of $B$ are pairwise products of those of $F$ and $\Sigma$. Hence, $B(z_1, \ldots, z_m)$, which is symmetric, is also nonnegative definite for any $(z_1, \ldots, z_m)$. We show later in Lemma \ref{lem:unifellip} that this matrix is uniformly positive definite. Let $A(z_1, \ldots, z_m)$ denote the unique positive definite square-root of $B(z_1, \ldots, z_m)$. Let $A_{X_j}(z_1, \ldots, z_m)$ and $A_{Y_j}(z_1, \ldots, z_m)$ denote the row of $A(z_1, \ldots, z_m)$ corresponding to coordinate $x_j$ and $y_j$, respectively, for $j \in [m]$. Thus, according to our convention, $A_{X_j}$ is the $(2j-1)$th row and $A_{Y_j}$ is the $(2j)$th row of $A$. 

\begin{defn}\label{defn:sdelimit}
	Fix points $ z_{j}=(x_j, y_j)\in [0,1]^2$, $j \in [m]$. Let $\diffdist_m(z_1, \ldots, z_m)$ denote the law of a time-homogeneous diffusion in $[0,1]^{2m}$ with zero drift, diffusion matrix $B$, normal reflection at the boundary, and initial conditions $Z_j(0)=z_j=(x_j, y_j)$, $j\in [m]$. The multidimensional vector-valued process $\left(Z_j(\cdot)=(X_j(\cdot),Y_j(\cdot))\right)$, $j\in [m]$, satisfies the stochastic differential equation (SDE):
	\eq\label{eq:limitsde}
	\begin{split}
		X_j(t)&= x_j + \int_0^t A_{X_j}(Z_1(s), \ldots, Z_m(s)) \cdot d\beta(s)  + L^{X,0}_j(t) - L^{X, 1}_j(t),\\
		Y_j(t)&= y_j + \int_0^t A_{Y_j}(Z_1(s), \ldots, Z_m(s)) \cdot d\beta(s)  + L^{Y,0}_j(t) - L^{Y,1}_j(t). 
	\end{split}
	\en
	Here $\beta=\left(  \beta_1, \ldots, \beta_{2m}  \right)$ is a $2m$ dimensional Brownian motion. The symbol $\int A_{\cdot} \cdot d\beta$ refers to the multidimensional It\^o stochastic integral with respect to the Brownian motion $\beta$. The processes $L^{X,0}_j(\cdot)$ and $L^{X,1}_j(\cdot)$ are the accumulated local times for the process $X_j$ at zero and one, respectively. The processes $L^{Y,0}_j(\cdot)$ and $L^{Y,1}_j(\cdot)$ are similarly defined. See \cite[Section 3.7]{KS91} for the normalization factor of local times.
\end{defn}

We show in Theorem \ref{lem:sdesoln} that, for $0 < p < 1$, the SDE \eqref{eq:limitsde} has a pathwise unique strong solution which is strong Markov. In particular, there is uniqueness in law and every solution is strong. The following is our main convergence result. Let $\skor$ be the usual Skorokhod space of RCLL paths from $[0,\infty)$ to $\rr^{2m}$. Unless otherwise mentioned, we work with the stronger locally uniform topology on this space. See \cite[Section 15 and 16]{Billingsley} for details on the Skorokhod space and the locally uniform and other topologies on it. This is for convenience. Since our limiting processes are continuous almost surely, the convergence with respect to the usual Skorokhod topology is equivalent to convergence in the locally uniform topology.

\begin{thm}\label{thm:jpoint} Fix $U$, $p\in (0, 1)$, and $\nu_0$. Fix an arbitrary set $\left\{  z_{j},\; j \in [m]  \right\}$ in $[0,1]^2$. Let $\left(Z^{(n)}_j(\cdot),\; j \in [m]\right)$ denote the $m$-point motion given in \eqref{eq:updateti} (without gather) when $\lambda^{(n)}=n$ and $s_0^{(n)}=1/\sqrt{n}$ starting with  $Z^{(n)}_j(0)=z_j$ for $j\in [m]$. Then, as $n$ tends to infinity, the sequence $\left( Z^{(n)}_j(t),\; t\ge 0, \; j \in [m]   \right)$, $n\in \NN$, converges in law in $\skor$ in the locally uniform topology to $\left( Z_j(t),\; t\ge 0, \; j \in [m]   \right)$ that is a solution of \eqref{eq:limitsde}. 
\end{thm}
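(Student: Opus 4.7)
The plan is to apply the standard convergence-of-Markov-processes machinery (as in Ethier--Kurtz or Stroock--Varadhan), adapted to handle the reflection at the non-smooth boundary of $[0,1]^{2m}$. Four ingredients are needed: tightness of $\{Z^{(n)}\}$ in $\skor$; identification of the limiting infinitesimal generator in the interior; identification of the boundary behavior as normal reflection at each face; and uniqueness of the reflected martingale problem associated with \eqref{eq:limitsde}, provided by the forward-referenced Theorem \ref{lem:sdesoln}.

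For the interior analysis, I would Taylor-expand the one-step conditional increment
\[
\E\bigl[f(Z^{(n)}(t_i))-f(Z^{(n)}(t_{i-1}))\bigm|\mcal{F}_{t_{i-1}}\bigr]
\]
of a test function $f\in C^2$ in the small parameter $s_0^{(n)}=1/\sqrt{n}$. The linear term vanishes by Assumption \ref{asmp:unbiased}. For the quadratic term, a single palm placement displaces card $i$ with probability $p\cdot\Ar{(z_i+U)\cap\oD}/\Ar{\oD}$, and simultaneously displaces cards $i\neq j$ with probability $p^2\varphi(\|z_i-z_j\|)/\Ar{\oD}$ by Lemma \ref{lem:areaint}; multiplying by the PPP rate $n\Ar{\oD}$ and by $(s_0^{(n)})^2=1/n$ yields exactly the blocks $F_{ii}\Sigma$ and $F_{ij}\Sigma$ of \eqref{eq:twobytwo}. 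Hence the discrete generator $\mcal{L}^{(n)}f$ converges uniformly on interior compacts to $\tfrac{1}{2}\mathrm{tr}(B(z)\nabla^2 f(z))$. Tightness follows from a fourth-moment bound of the form $\E[|Z^{(n)}(t+h)-Z^{(n)}(t)|^4]\le Ch^2$, obtained via standard estimates for compensated Poisson integrals with jump sizes bounded by $1/\sqrt{n}$; vanishing maximum jump sizes then ensure every subsequential limit is almost surely continuous.

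The main challenge is identifying the limit of the boundary reflection. Since $[0,1]^{2m}$ is a product of intervals, the reflection decouples coordinatewise; focusing on a single coordinate, say $X^{(n)}_j$, the truncation in \eqref{eq:spread} is precisely one application of the one-dimensional Skorokhod reflection map on $[0,1]$. This yields a decomposition
\[
X^{(n)}_j(t)=x_j+M^{(n)}_j(t)+L^{(n),0}_j(t)-L^{(n),1}_j(t),
\]
where $M^{(n)}_j$ is the compensated martingale and $L^{(n),0}_j,L^{(n),1}_j$ record the cumulative upward/downward push-back at the two boundary faces. Lipschitz continuity of the Skorokhod map in the uniform norm gives joint tightness of the quadruple and continuity of every subsequential limit; the limit then satisfies the Skorokhod characterization that $L^{\cdot,0}_j$ (respectively $L^{\cdot,1}_j$) increases only on $\{X_j=0\}$ (respectively $\{X_j=1\}$). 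Combined with the generator convergence and joint identification of cross-coordinate martingale brackets via the matrix $B$, every subsequential weak limit $Z$ solves the reflected SDE \eqref{eq:limitsde}; pathwise uniqueness from Theorem \ref{lem:sdesoln} then closes the argument.

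The hardest part will be making the boundary identification rigorous. On a single step the truncation can push the particle by the full jump size $s_0^{(n)}$, so one must show that in aggregate the push is asymptotically exerted only when the process actually touches the boundary. The coordinatewise Lipschitz property of the one-dimensional Skorokhod map is the essential tool, but because the diffusion coefficient $B$ depends on positions through $F$, the martingale and reflection terms cannot be disentangled by a naive ``unreflect-then-reflect'' argument; they must be analyzed jointly through the reflected martingale problem formulation.
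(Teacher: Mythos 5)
Your proposal is correct in outline and shares the key structural observations with the paper --- coordinatewise Skorokhod decomposition on $[0,1]$, Lipschitz continuity of the one-dimensional Skorokhod map, identification of the mutual variations via $F$ and $\Sigma$, and closing via the uniqueness in Theorem \ref{lem:sdesoln} --- but it takes a noticeably different technical route, and in one place misassesses the difficulty. The paper does not compute and pass to the limit with the infinitesimal generator, nor does it prove tightness from moment bounds. Instead it observes (first in the warm-up case $m=1$, Lemma \ref{lem:1pointlimit}) that the pre-limit coordinate $X^{(n)}_j$ is \emph{exactly} equal to $\Gamma_{0,1}\bigl(x_j + M^{X,n}_j\bigr)$, the explicit two-sided Skorokhod map of Kruk et al.\ applied pathwise to a free compensated Poisson input $M^{X,n}_j$ with jumps of size $O(1/\sqrt{n})$. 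Convergence of the input follows from Donsker (using the martingale property guaranteed by $\nu_0(\cos\theta)=\nu_0(\sin\theta)=0$), and tightness and continuity of the reflected coordinate then come \emph{for free} from the Lipschitz property of $\Gamma_{0,1}$ --- no $4$th-moment estimate is needed. The cross-coordinate brackets and the dependence of $B$ on position are then handled not through generator expansions but by computing predictable compensators of the products $M^{X,n}_j M^{X,n}_k$ etc.\ against the intensity measure of the Poisson random measure (Lemma \ref{lem:compensators}), passing to the limit by localization, and invoking the Karatzas--Shreve martingale representation theorem to obtain the stochastic-integral form $\int A\,d\beta$. This also means your last-paragraph worry is overstated: the paper \emph{does} use an ``unreflect-then-reflect'' argument, because the discrete scheme is \emph{literally} a Skorokhod map of the raw martingale input --- the dependence of $B$ on position enters only through the predictable compensators of the input, not through the reflection map itself, which remains a fixed deterministic Lipschitz transformation. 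Your reflected-martingale-problem formulation would also work but requires more scaffolding (e.g., handling the corners of the cube in a submartingale-problem sense); the paper's route is more elementary precisely because the domain is a product of intervals and the one-dimensional Skorokhod map admits the explicit representation \eqref{eq:explicitskorokhod}.
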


That the limiting process should have zero drift and diffusion coefficients given by the matrix $B$ is easy to guess by computing the mean and the covariance of the increments of the discrete model. The appearance of local time is the consequence of the boundary behavior of our model and this is where it is critical that we use the $x\mapsto \ol{x}$ function in definition \eqref{eq:spread}.

Theorem \ref{thm:jpoint} is proved in several steps below. We start with $m=1$.

\begin{lemma}\label{lem:1pointlimit}
	Let $Z_1^{(n)}(t)=\left( X_1^{(n)}(t), Y_1^{(n)}(t) \right)$, $t\ge 0$, denote the one point motion given in \eqref{eq:updateti} (without gather) with $\lambda^{(n)}=n$ and $s_0^{(n)}=1/\sqrt{n}$ and given initial condition $Z_1^{(n)}(0)=(x,y)\in [0,1]^2$. Then, as $n\rightarrow \infty$, $\left(\left( X_1^{(n)}(t), Y_1^{(n)}(t) \right),t\ge 0 \right)$ converges in law to a pair $\left((X_1(t), Y_1(t)), \; t\ge 0\right)$ of independent doubly reflected Brownian motion (RBM) in the interval $[0,1]$ with zero drift and constant diffusion coefficient $p\pi \delta^2\sigma^2$, starting at $(x,y)$. In other words, $(X_1, Y_1)$ satisfies the SDE
	\eq\label{eq:sde2}
	\begin{split}
		X_1(t) &= x_1 + \sigma\delta \sqrt{p\pi} W_1(t) + L_1^{X,0}(t) - L_1^{X,1}(t), \\
		Y_1(t) &= y_1 + \sigma\delta\sqrt{p\pi} W_2(t) + L_1^{Y,0}(t) - L_1^{Y,1}(t),
	\end{split}
	\en
	where $(W_1, W_2)$ is a pair of independent standard one-dimensional Brownian motions, $L_1^{X,0}(t), L_1^{X,1}(t)$ are the accumulated local times at zero and one, respectively, till time $t$ for the semimartingale $X_1$, and $L_1^{Y,0}(t), L_1^{Y,1}(t)$ are similarly defined. 
\end{lemma}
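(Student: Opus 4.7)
The plan is the classical two-step program: prove tightness of $\{Z_1^{(n)}\}$ in $\skor$, then identify the limit via a martingale problem whose unique solution is reflected Brownian motion on $[0,1]^2$ with diagonal diffusion matrix $p\pi\delta^2\sigma^2 I_2$. For the generator computation, since $z \in D$ implies $(z + U) \subseteq \overline{D}$, for any $f \in C^3([0,1]^2)$ the infinitesimal generator of the pre-limit pure-jump Markov process is
\[
\mathcal{L}^{(n)} f(z) = n p \pi\delta^2 \int_0^{2\pi} \left[ f\!\left(\overline{z + n^{-1/2}(\cos\theta, \sin\theta)}\right) - f(z) \right] \nu_0(d\theta).
\]
For $z$ whose distance to $\partial D$ exceeds $n^{-1/2}$ the truncation $\overline{\cdot}$ is vacuous, and a second-order Taylor expansion combined with Assumption \ref{asmp:unbiased} ($\nu_0(\cos\theta) = \nu_0(\sin\theta) = \nu_0(\cos\theta \sin\theta) = 0$, $\nu_0(\cos^2\theta) = \nu_0(\sin^2\theta) = \sigma^2$) yields $\mathcal{L}^{(n)} f(z) = \tfrac{1}{2} p\pi\delta^2\sigma^2\, \Delta f(z) + O(n^{-1/2} \norm{f}_{C^3})$. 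This is the generator of RBM on $[0,1]^2$ with the claimed diffusion, and its diagonal structure forces the two coordinates to be independent in the limit.

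For tightness, I would write $Z_1^{(n)}(t) - (x,y) = M^{(n)}(t) + R^{(n)}(t)$ by compensating the Poisson point process of palm placements restricted to events that actually move the particle: $M^{(n)}$ is a $2$-dimensional martingale whose predictable quadratic variation grows at rate bounded by $p\pi\delta^2 \sigma^2$ per unit time, and $R^{(n)}$ collects the contributions coming from the boundary projection $\overline{\cdot}$, monotone in each of the four outward boundary directions. Individual jumps are bounded in magnitude by $n^{-1/2}$, so the maximum jump tends to zero. Aldous's criterion \cite[Section 16]{Billingsley} applied to each scalar coordinate then yields tightness of $\{Z_1^{(n)}\}$ in $\skor$, and every subsequential limit is continuous.

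Identification proceeds via the martingale problem. For $f \in C^3([0,1]^2)$ with vanishing outward normal derivative on $\partial D$, the process $f(Z_1^{(n)}(t)) - f(Z_1^{(n)}(0)) - \int_0^t \mathcal{L}^{(n)} f(Z_1^{(n)}(s))\, ds$ is a martingale by construction. Passing to any subsequential limit $Z_1$, using uniform convergence of $\mathcal{L}^{(n)} f$ to $\tfrac{1}{2}p\pi\delta^2\sigma^2 \Delta f$ on the class of admissible test functions together with continuity of $f$, shows that $Z_1$ solves the Stroock--Varadhan submartingale problem for reflected Brownian motion on the unit square with co-normal reflection. This martingale problem is well-posed; because the diffusion matrix is diagonal and the domain is the product $[0,1]\times[0,1]$, its unique solution factorizes into a product of two independent one-dimensional doubly reflected Brownian motions with diffusion coefficient $p\pi\delta^2\sigma^2$. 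This is exactly \eqref{eq:sde2}, and uniqueness upgrades subsequential convergence to full convergence.

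The main obstacle is handling the boundary contribution to $\mathcal{L}^{(n)} f$. Near $\partial D$ the projection $\overline{z + n^{-1/2}v}$ clips the jump and produces a first-order term proportional to $\partial_n f$; the Neumann condition $\partial_n f = 0$ cancels the leading-order boundary term, but one must still show that the residual error, when integrated against the occupation measure of $Z_1^{(n)}$ near $\partial D$, vanishes in the limit. This requires an a priori estimate showing that, for every $T>0$, the expected Lebesgue time $Z_1^{(n)}$ spends within distance $n^{-1/2}$ of $\partial D$ during $[0,T]$ is $O(n^{-1/2})$; standard reflected random walk arguments provide this. Checking further that the accumulated projection corrections $R^{(n)}$ converge in the uniform topology to the local time processes $L^{X,0}_1, L^{X,1}_1, L^{Y,0}_1, L^{Y,1}_1$ of \eqref{eq:sde2} is the technically delicate part, but it follows from the identification of the limit together with continuity of the Skorokhod reflection map on $[0,1]$ applied to each coordinate.
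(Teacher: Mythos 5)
Your proposal is sound but takes a genuinely different route from the paper. You prove tightness via Aldous's criterion and identify any subsequential limit through a Stroock--Varadhan (sub)martingale problem, reducing to well-posedness of RBM on the square with normal reflection and then exploiting the diagonal diffusion matrix and product-domain structure to factorize into two independent one-dimensional reflected Brownian motions. The paper instead works pathwise with the explicit Skorokhod map $\Gamma_{0,1}$ on $[0,1]$ from the outset: each coordinate of the discrete process is written, exactly and for every $n$, as $\Gamma_{0,1}$ applied to a pure-jump martingale $M^{X,n}$ (respectively $M^{Y,n}$). Since $\Gamma_{0,1}$ is a deterministic Lipschitz map in the locally uniform topology, tightness and identification of the full process collapse to Donsker's invariance principle for the driving martingales; the independence of $X_1$ and $Y_1$ then follows from Knight's theorem applied to the covariation $[M^{X,n}, M^{Y,n}]=0$ forced by Assumption~\ref{asmp:unbiased}. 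What each approach buys: yours is generator-centric and classical, but it carries more technical debt — well-posedness of the submartingale problem on a non-smooth (cornered) domain, an occupation-time estimate near $\partial D$ that you flag but do not prove, and a final step identifying the boundary corrections with local times for which you end up invoking the Skorokhod map anyway. The paper's route front-loads the Skorokhod-map observation, which eliminates all of the boundary bookkeeping: there is no need for Neumann test functions, no occupation measure estimate, and the local-time identification comes for free from the complementarity conditions in Definition~\ref{defn:skorokhodmap}. Your argument would also benefit from noting that the rectangle's corners are harmless precisely \emph{because} the reflection is normal and the diffusion diagonal (the product structure you invoke at the end is what rescues well-posedness), whereas the paper's coordinatewise treatment never encounters the corner issue at all.
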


The existence, uniqueness of reflected process and corresponding discrete to continuum convergence problems are best handled by the established tools of Skorokhod problems and Skorokhod maps. Skorokhod maps transform an ``un-reflected'' (or unconstrained) process to a ``reflected'' (or constrained) one inside a domain via a deterministic transform on the RCLL path-space. The existence of such a deterministic transform is called a Skorokhod problem. Once it is established that such a map exists, i.e. the Skorokhod problem has a solution, then existence, uniqueness, convergence problems can be handled ``pre-reflection'' by the usual martingale methods, and let the deterministic transform take care of the rest. See the exposition in \cite{Kruk07} where the reader can find more details.

To simplify the notation denote $\mathcal{D}^{(1)}[0,\infty)$, the Skorokhod space of RCLL functions from $[0, \infty)$ to $\rr$, by $\mathcal{D}[0,\infty)$. Let $\mathcal{BV}[0, \infty)$ and $\mathcal{I}[0, \infty)$ denote the subsets of $\mathcal{D}[0, \infty)$ comprised of functions of bounded variations and nondecreasing functions, respectively. 

\begin{defn}\label{defn:skorokhodmap} \textbf{Skorokhod map on {$[0,1]$}.} Given $\psi \in \mathcal{D}[0, \infty)$, there exists a unique pair of functions $\left( \bar{\phi}, \bar{\eta}  \right) \in \mathcal{D}[0, \infty) \times \mathcal{BV}[0, \infty)$ that satisfy the following two properties:
	\begin{enumerate}[(i)]
		\item For every $t\ge 0$, $\bar\phi(t)=\psi(t)+ \bar{\eta}(t)\in [0,1]$.
		\item $\bar{\eta}(0-)=0$ and $\bar{\eta}$ has the decomposition $\bar{\eta}=\bar{\eta}_l - \bar{\eta}_u$ as the difference of functions $\bar\eta_l, \bar\eta_u\in \mathcal{I}[0, \infty)$ satisfying the so-called complementarity conditions:
		\eq\label{eq:complementarity}
		\int_0^\infty 1\left\{ \bar\phi(s) > 0 \right\}d \bar\eta_l(s)=0 \quad \text{and}\quad \int_0^\infty 1\left\{ \bar\phi(s) < 1 \right \} d\bar\eta_u(s)=0.
		\en
	\end{enumerate}
	Here $\bar{\eta}(0-)=0$ means that, if $\bar\eta(0)>0$, then $d\bar\eta$ has an atom at zero. We refer to the map $\Gamma_{0,1}:\mathcal{D}[0, \infty) \rightarrow \mathcal{D}[0, \infty)$ that takes $\psi$ to $\bar\phi$ as the Skorokhod map on $[0,1]$. The pair $\left( \bar\phi, \bar\eta \right)$ is said to solve the Skorokhod problem on $[0,1]$ with input $\psi$.  
\end{defn}

The existence and uniqueness of Skorokhod map over general domains is a classical topic. See, for example, Tanaka \cite{Tanaka79}. Let $x^+=\max(x,0)$ for $x\in \rr$.
On $[0,1]$ the map has an explicit solution. In Theorem 1.4 of \cite{Kruk07} it is shown that $\Gamma_{0,1}=\Lambda_1 \circ \Gamma_0$, where 
\eq\label{eq:explicitskorokhod}
\begin{split}
	\Gamma_0(\psi)(t)&=\psi(t) + \sup_{0\le s \le t} \left[ -\psi(s)  \right]^+\quad \text{and}\\
	\Lambda_1(\phi)(t)&= \phi(t) - \sup_{0\le s \le t} \left[  \left( \phi(s)-1  \right)^+ \wedge \inf_{s\le u \le t} \phi(u)   \right].
\end{split}
\en
In particular, both $\Gamma_0$ and $\Gamma_{0,1}$ are Lipschitz with respect to the (locally) uniform and the Skorokhod $J_1$ metric on $\mathcal{D}[0, \infty)$.

\begin{proof}[Proof of Lemma \ref{lem:1pointlimit}] Fix $n\in \NN$. Let $\chi$ denote a PPP on $(0,\infty)\times \oD$ with rate $\lambda^{(n)}=n$. Evaluate the atoms of the PPP as a sequence $\left\{ (t_i,w_i),\; i\in \NN   \right\}$ where $t_i$ is increasing with $i$. Recall the i.i.d. sequence $\left( \theta_i,\; i \in \NN  \right)$ sampled from $\nu_0$ and an independent i.i.d. sequence of Bernoulli($p$) random variables $\left( H_i,\; i \in \NN  \right)$ from Definition \ref{defn:smooshingcards} (where we have substituted the notation $H_i$ for $H_1(i)$). Define $Z_1^{(n)}(\cdot)$ as in \eqref{eq:updateti}. 
	
	For $t\ge 0$, let 
	\[
	\begin{split}
		N(t)&:=\int_{[0,t] \times \oD}  1\left\{ Z_1^{(n)}(s-) \in  w+U \right\} d\chi(s,w) \\
		&= \sum_{j:\; t_j \le t} 1\left\{  \normU{Z_1^{(n)}(t_j-) - w_j} \le \delta \right\}.  
	\end{split}
	\]
	Then $N(t)$ counts the number of times the point $Z_1^{(n)}(\cdot-)$ is ``under the palm'' during time interval $[0,t]$. By the symmetry of the norm $\normU{\cdot}$ and the spatial homogeneity of the PPP, if the current position of the card is $z_1^{(n)}$, the first time when it is under the palm is an exponential random variable with rate $n\Ar{z_1^{(n)}+U}=n\pi \delta^2$, independent of the past. Thus, $\left(N(t),\; t \ge 0\right)$ is a Poisson process with rate $n\pi \delta^2$.
	
	Mark each jump time $t_i$ of $N$ with the corresponding $\theta_i$ and $H_i$. For $j \in \NN$, define
	\eq\label{eq:definexcounts}
	\begin{split}
		M^{X,n}(t_j)&= \frac{1}{\sqrt{n}} \sum_{i=1}^{j} H_i \cos(\theta_i).
	\end{split}
	\en
	Extend to other values of $t$ by defining that if $t_{j-1}\le t < t_j$ for some $j\in \NN$, then
	\eq\label{eq:definexcounts2}
	\begin{split}
		M^{X,n}(t)&= M^{X,n}(t_{j-1}).
	\end{split}
	\en
	Note that $M^{X,n}$ is a martingale since $\nu_0(\cos(\theta))=0$ and $H_i$ is independent of $\theta_i$.
	
	Now suppose that $\cos(\theta_j) \le 0$. Then the difference $X^{(n)}_1(t_j) - X^{(n)}_1(t_{j-1})$ is given by 
	\[
	\begin{dcases}
		\frac{1}{\sqrt{n}} H_j \cos(\theta_j),& \; \text{if}\; X^{(n)}_1(t_{j-1}) + \frac{1}{\sqrt{n}} H_j \cos(\theta_j) >0,\\
		- X^{(n)}_1(t_{j-1}), &  \text{if}\; X^{(n)}_1(t_{j-1}) + \frac{1}{\sqrt{n}} H_j \cos(\theta_j) \le 0.
	\end{dcases}
	\]
	We can express this differently as 
	\[
	X^{(n)}_1(t_j) - X^{(n)}_1(t_{j-1})= \frac{1}{\sqrt{n}} H_j \cos(\theta_j) + \left(X^{(n)}_1(t_{j-1}) + \frac{1}{\sqrt{n}} H_j \cos(\theta_j)\right)^-,
	\]
	where $x^-:=\max(-x,0) \ge 0$. 
	
	Similarly, when $\cos(\theta_j) > 0$, $X^{(n)}_1(t_j) - X^{(n)}_1(t_{j-1})$ is given by 
	\[
	\begin{dcases}
		\frac{1}{\sqrt{n}} H_j \cos(\theta_j),& \; \text{if}\; X^{(n)}_1(t_{j-1}) + \frac{1}{\sqrt{n}} H_j \cos(\theta_j) <1,\\
		1- X^{(n)}_1(t_{j-1}), &  \text{if}\; X^{(n)}_1(t_{j-1}) + \frac{1}{\sqrt{n}} H_j \cos(\theta_j) \ge 1.
	\end{dcases}
	\]
	Hence,
	\[
	X^{(n)}_1(t_j) - X^{(n)}_1(t_{j-1})= \frac{1}{\sqrt{n}} H_j \cos(\theta_j) - \left(1-X^{(n)}_1(t_{j-1}) - \frac{1}{\sqrt{n}} H_j \cos(\theta_j)\right)^-.
	\]
	
	Combining the two cases note that we can always write
	\[
	\begin{split}
		&X^{(n)}_1(t_j)  - X^{(n)}_1(t_{j-1}) = \frac{1}{\sqrt{n}} H_j \cos(\theta_j)\\
		& + \left(X^{(n)}_1(t_{j-1}) +\frac{1}{\sqrt{n}} H_j \cos(\theta_j)\right)^- - \left(1-X^{(n)}_1(t_{j-1}) - \frac{1}{\sqrt{n}} H_j \cos(\theta_j)\right)^-.
	\end{split}
	\]
	
	Define the following pair of increasing functions, both starting at zero:
	\eq\label{eq:disloctime}
	\begin{split}
		I^{X,0,n}(t_j) &= I^{X,0,n}(t_{j-1}) + \left(X^{(n)}_1(t_{j-1}) +\frac{1}{\sqrt{n}} H_j \cos(\theta_j)\right)^-\\
		I^{X,1,n}(t_j) &= I^{X,1,n}(t_{j-1}) + \left(1-X^{(n)}_1(t_{j-1}) - \frac{1}{\sqrt{n}} H_j \cos(\theta_j)\right)^-.
	\end{split}
	\en
	Extend them to all values of $t\in [0, \infty)$ by defining $I^{X,0,n}(t)=I^{X,0,n}(t_{j-1})$ and $I^{X,1,n}(t)=I^{X,1,n}(t_{j-1})$, for all $t\in [t_{j-1}, t_j)$. Note that the jumps of the process $I^{X,0,n}$ occur at those $t_j$ such that $X_1^{(n)}(t_j)=0$ and $\cos(\theta_j) < 0, H_j=1$ while the jumps of $I^{X,1,n}$ occur at those $t_j$ such that $X_1^{(n)}(t_j)=1$ and $\cos(\theta_j)>0, H_j=1$.

	Recall the martingale $M^{X,n}$ from \eqref{eq:definexcounts}. 
	From here it is not hard to see that $X_1^{(n)}(\cdot)$ is the solution of the following system of pathwise equations \eqref{eq:disloctime} and
	\[
	\begin{split}
		X^{(n)}_1(t) &= x_1 +  M^{X,n}\left( t\right) \\
		&+ \int_0^t 1\left\{ X^{(n)}_1(s) = 0 \right\} d I^{X,0,n}\left( s \right)- \int_0^t 1\left\{ X^{(n)}_1(s)=1  \right\} dI^{X,1,n}\left(s\right). 
	\end{split}
	\]
	This is an expression that satisfies the Skorokhod problem decomposition given in Definition \ref{defn:skorokhodmap}.
	The process $X^{(n)}_1$ is constrained to stay in $[0,1]$, $M^{X,n}$ is RCLL, while 
	\[
	\bar{\eta}_l:=\int_0^t 1\left\{ X^{(n)}_1(s) = 0 \right\} d I^{X,0,n}\left( s \right),\; \bar{\eta}_u:=\int_0^t 1\left\{ X^{(n)}_1(s)=1  \right\} dI^{X,1,n}\left(s\right)
	\]
	are increasing and obviously satisfy the complementarity conditions \eqref{eq:complementarity}. Hence $$\left( {X}_1^{(n)},  I^{X,0,n} - I^{X,1,n} \right)$$ is the unique solution of the Skorokhod problem on $[0,1]$ with input $x+{M}^{X,n}(\cdot)$.
	
	Now take limits as $n$ tends to infinity. It follows from Donsker's invariance principle that the continuous time martingale ${M}^{X,n}$ converges to $\sigma\delta\sqrt{p\pi} W_1$, where $W_1$ is a standard Brownian motion. This is because $M^{X,n}$ is a continuous time centered random walk that jumps at rate $n\pi\delta^2$ and the variance of its increments is $p\sigma^2/n$. By the Lipschitz continuity of the deterministic Skorokhod map, it immediately follows that the vector of processes
	\[
	\left( {M}^{X,n}, {X}_1^{(n)},  I^{X,0,n}, I^{X,1,n} \right)
	\]
	jointly converges in law to the vector of $\sigma\delta\sqrt{p\pi} W_1$ and the corresponding terms in the solution of the Skorokhod problem in $[0,1]$ with input $x_1+ \sigma\delta\sqrt{p\pi} W_1$.
	
	Let us now identify the limit as reflecting Brownian motion in the interval $[0,1]$ with constant diffusion coefficient $\sigma^2 p \pi \delta^2$. The limit, say $X_1$, satisfies the SDE given by the Skorokhod equation:
	\[
	X_1(t) = x_1 + \sigma \delta\sqrt{p\pi} W_1(t) + L^{X,0}(t) - L^{X,1}(t),
	\]
	where $L^{X,1}$ and $L^{X,0}$ are outputs from the Skorkhod problem with input $x + \sigma\delta \sqrt{p\pi} W_1(t)$. To identify $L^{X,1}$ and $L^{X,0}$ with the local time of the process $X_1$ at the boundary zero and one, respectively, we apply the Tanaka (\cite[page 220]{KS91}) formula to the semimartingale $X_1$ for the functions $x\mapsto x^+$ and $x\mapsto (1-x)^+$.
	\bigskip
	
	For the $y$-coordinate process repeat the above argument except that $\cos(\theta_j)$ will be replaced by $\sin(\theta_j)$. 
	That is, define
	\eq\label{eq:definexcounts3}
	\begin{split}
		M^{Y,n}(t_j)&= \frac{1}{\sqrt{n}} \sum_{i=1}^{j} H_i \sin(\theta_i).
	\end{split}
	\en
	Extend to other values of $t$ by keeping the process constant in each interval $[t_{j-1}, t_j)$, $j\in \NN$.
	$M^{Y,n}$ is also a martingale since $\nu_0(\sin(\theta))=0$.
	
	The naturally defined corresponding processes for the $y$-coordinate
	\[
	\left( {M}^{Y,n}, Y_1^{(n)}, I^{Y,0,n},  I^{Y,1,n} \right)
	\]
	jointly converges in law to the vector of a Brownian motion $\sigma \delta\sqrt{p\pi} W_2$ and the corresponding terms in the solution of the Skorokhod problem in $[0,1]$ with input $y+ \sigma \delta\sqrt{p\pi} W_2$. We need to argue joint convergence of the vector
	\[
	\left({M}^{X,n}, X_1^{(n)}, I^{X,0,n}, I^{X,1,n}, {M}^{Y,n}, Y_1^{(n)}, I^{Y,0,n}, I^{Y,1,n} \right).
	\]
	However, this will follow from the joint convergence of the pair $\left( {M}^{X,n}, {M}^{Y,n} \right)$ since everything else is a deterministic Lipschitz function applied to this pair of processes.

	We first claim that $\left( {M}^{X,n}(t){M}^{Y,n}(t),\; t\ge 0\right)$ is also a martingale. Since each process individually is a process of identically distributed independent increments, it suffices to check that the increments are uncorrelated. However, that is guaranteed by the Assumption \ref{asmp:unbiased} that $\nu_0\left( \cos(\theta)\sin(\theta) \right)=0$.
	
	Now, by marginal convergence, it follows that the sequence of laws of the pair of processes $\left((p\pi \sigma^2\delta^2)^{-1/2}{M}^{X,n}, (p\pi \sigma^2\delta^2)^{-1/2}{M}^{Y,n} \right)$ in $\mathcal{D}^{(2)}[0, \infty)$ is tight in the locally uniform metric and that any limiting processes $W_1, W_2$ are marginally Brownian motions that additionally satisfy $W_1W_2$ is a local martingale. It follows by Knight's theorem (see \cite[page 179]{KS91}) that $W_1, W_2$ must be a pair of independent Brownian motions. Since $X_1$ and $Y_1$ are outputs of the deterministic Skorokhod map applied to $W_1$ and $W_2$, they too are independent. This completes the proof.
\end{proof}

\begin{proof}[Proof of Theorem \ref{thm:jpoint}] This proof is a generalization of the proof of Lemma \ref{lem:1pointlimit}. As in that proof, for every $n\in \NN$, $j \in [m]$, and $i\in\{0,1\}$, define the quantities
	\[
	{X}_j^{(n)}, {Y}_j^{(n)}, M_j^{X,n}, M_j^{Y,n}, I_j^{X,i,n}, I_j^{Y,i,n}.
	\]
	Then, for each $j\in [m]$, the vector $\left( {X}_j^{(n)}, {Y}^{(n)}_j\right)$ can be expressed as the solution of a system of Skorokhod equations in $[0,1]$ with given inputs
	\eq\label{eq:multiinput}
	x_j + {M}_j^{X,n} \quad \text{and}\quad y_j + {M}_j^{Y,n}\left( \cdot \right), \quad \text{respectively}.
	\en
	
	The strategy is now the following. Consider the vector of $4m$ many processes obtained by concatenating $\left( X_j^{(n)}, Y^{(n)}_j,\; j \in [m] \right)$ with the $2m$ many inputs in \eqref{eq:multiinput}. Each coordinate process is tight by Lemma \ref{lem:1pointlimit} and has an almost sure continuous limit. Hence the joint law of these $4m$ processes is tight in $\mathcal{D}^{(4m)}[0,\infty)$, with the locally uniform metric, and any weak limit is a probability measure on $\mathcal{C}^{(4m)}[0, \infty)$. The latter is the space of all continuous functions from $[0, \infty)$ to $\rr^{4m}$ equipped with the locally uniform metric. 
	
	Let the $4m$ dimensional vector
	\eq\label{eq:limitproc}
	\left( X_j, Y_j, x_j + {M}_j^X, y_j + {M}_j^Y,\;  j\in [m]  \right)
	\en
	denote a process whose law is any weak limit of the sequence of processes 
	\eq\label{eq:limittilde}
	\left( X_j^{(n)}, Y^{(n)}_j, x_j + {M}^{X,n}_j, y_j + {M}^{Y,n}_j,\; j \in [m]   \right),\quad n \in \NN.
	\en

	
	To prove the existence of the limiting SDE representation, it is therefore enough to argue that the vector of martingales $\left(  {M}_j^{X},  {M}_j^{Y},\; j \in [m] \right)$ has a stochastic integral representation as the local martingale component in \eqref{eq:limitsde}. Once this is achieved, using the uniqueness in law of a process satisfying SDE \eqref{eq:limitsde} proved in Theorem \ref{lem:sdesoln} below, every weak limit must be the same and given by the solution of \eqref{eq:limitsde}. 
	
	To carry this out carefully, start be expressing the processes ${M}_j^{X,n}, {M}_j^{Y,n}$, $j\in [m]$, as martingales with respect to natural filtrations. Fix $n \in \NN$. Recall the PPP $\chi$ on $(0,\infty) \times \oD$ from the beginning of the proof of Lemma \ref{lem:1pointlimit}. Extend the PPP by decorating each atom of $\chi$ by an independent vector of length $(m+1)$, $\left( \theta, H_1, \ldots, H_m \right)$ where we sample $\theta \sim \nu_0$, and $\left(  H_j,\; j \in [m] \right)$ are i.i.d. Bernoulli$(p)$ picks, independent of $\theta$. This produces a PPP $\overline{\chi}$ on $[0, \infty)\times \oD \times [0,2\pi] \times \{0,1\}^m$. Choose a suitable probability space $\left(\Omega, \fil_{\infty}, \mathcal{P} \right)$ that supports $\overline{\chi}$. Let $\left(\fil_t,\; t\ge 0 \right)$ be the natural right continuous filtration generated by the process $\left( \overline{\chi}_t,\; t\ge 0   \right)$ where $\overline{\chi}_t$ is the restriction of $\overline{\chi}$ to $[0,t]\times \oD \times [0,2\pi] \times \{0,1\}^m$. Note that, as opposed to Definition \ref{defn:smooshingcards}, in this proof we attach the random angle and Bernoulli variables whether or not there are cards ``under the palm''. They simply do not influence the motion of the cards unless the cards are under the palm. Enumerate the countably many atoms of $\overline{\chi}$ by $\left(  (t_i, w_i, \theta_i, H_1(i), \ldots, H_m(i)),\; i \in \NN  \right)$ where $t_1 < t_2 < \ldots$. Then, on our sample space above we have the following expressions:
	\[
	\begin{split}
		{M}_j^{X,n}(t) &= \frac{1}{\sqrt{n}} \sum_{i: t_i \le t} H_j(i) \cos(\theta_i) 1\left\{  \normU{Z_j^{(n)}(t_i-) - w_i} \le \delta \right\},\\
		{M}_j^{Y,n}(t) &= \frac{1}{\sqrt{n}} \sum_{i: t_i \le t} H_j(i)\sin(\theta_i) 1\left\{  \normU{Z_j^{(n)}(t_i-) - w_i} \le \delta \right\}.
	\end{split}
	\]
	Here, as before, $Z_j^{(n)}=\left( X_j^{(n)}, Y_j^{(n)}  \right)$. 
	
	Let $Z^{(n)}$ denote the $2m$ dimensional vector of $\left( Z_j^{(n)},\; j \in [m]   \right)$. Recall the $2m\times 2m$ dimensional matrix $B(z_1, \ldots, z_m)$ from \eqref{eq:limitsde}. For $z=(z_1=(x_1, y_1), \ldots, z_m=(x_m,y_m))\in \rr^{2m}$, label the elements of $B$ by $B_{x_j,x_k}(z)$, $B_{x_j,y_k}(z)$, or $B_{y_j,y_k}(z)$, for $j,k \in [m]$, by a natural correspondence. 
	
	\begin{lemma}\label{lem:compensators} In the filtered probability space described above each ${M}_j^{X,n}$ and ${M}_j^{Y,n}$ is an $\left( \fil_t \right)$ martingale. Moreover, for all $(j,k)\in [m]^2$, the following processes are also $\left( \fil_t \right)$  martingales:
		\eq\label{eq:compcov}
		\begin{split}
			\xi_{j,k}^{X,X,n}(t)&:= {M}_j^{X,n}(t) {M}_k^{X,n}(t) - \int_0^t B_{x_j,x_k}\left(Z^{(n)}(s) \right)ds.\\
			\xi_{j,k}^{X,Y,n}(t)&:= {M}_j^{X,n}(t) {M}_k^{Y,n}(t) - \int_0^t B_{x_j,y_k}\left(Z^{(n)}(s) \right)ds.\\
			\xi_{j,k}^{Y,Y,n}(t)&:= {M}_j^{Y,n}(t) {M}_k^{Y,n}(t) - \int_0^t B_{y_j,y_k}\left(Z^{(n)}(s) \right)ds.
		\end{split}
		\en
	\end{lemma}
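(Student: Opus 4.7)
The plan is to express each of the six processes as an integral against the compensated marked Poisson random measure generated by $\overline{\chi}$, whose predictable compensator is $n\,ds\,dw\,\nu_0(d\theta)\otimes\mathrm{Ber}(p)^{\otimes m}(dh)$, and to invoke the standard martingale theory for marked point processes. For the marginal claim, the jump of $M_j^{X,n}$ at $t_i$ equals $n^{-1/2}H_j(i)\cos(\theta_i)\mathbf{1}\{\|Z_j^{(n)}(t_i-)-w_i\|\le\delta\}$; since the mark $(w_i,\theta_i,H_1(i),\ldots,H_m(i))$ is independent of $\fil_{t_i-}$ while the position vector $Z^{(n)}(t_i-)$ is $\fil_{t_i-}$-measurable, and since $\nu_0(\cos\theta)=0$ by Assumption \ref{asmp:unbiased}, the predictable compensator of the jump sum vanishes. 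Hence $M_j^{X,n}$ and the analogously defined $M_j^{Y,n}$ are $\fil_t$-martingales, and they are $L^2$-bounded on compact intervals because the jumps are bounded by $n^{-1/2}$ and the total jump rate is bounded.

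For the three covariation claims I would apply the integration-by-parts formula for pure-jump semimartingales: $M_j^{\ast,n}M_k^{\ast,n}$ differs from a local martingale by the bracket $[M_j^{\ast,n},M_k^{\ast,n}]_t=\sum_{i:\,t_i\le t}\Delta M_j^{\ast,n}(t_i)\Delta M_k^{\ast,n}(t_i)$, so it suffices to identify the predictable compensator of this pure-jump sum. Integrating the jump product against the intensity measure of $\overline{\chi}$ factors into a spatial, an angular, and a Bernoulli integral. The $n^{-1}$ from the two jump sizes exactly cancels the $n$ from the spatial intensity; the Bernoulli integral returns $\mathbb{E}[H_j(i)H_k(i)]$, which is $p^2$ when $j\neq k$ and $p$ when $j=k$, thereby reproducing the entries of $F$ in \eqref{eq:whatisF}; the angular integral returns $\sigma^2$ in the $(X,X)$ and $(Y,Y)$ cases and $0$ in the $(X,Y)$ case, by Assumption \ref{asmp:unbiased}; and the spatial integral over $\oD$ returns $\mathrm{Area}\bigl((Z_j^{(n)}(s-)+U)\cap(Z_k^{(n)}(s-)+U)\bigr)$, which by Lemma \ref{lem:areaint} equals $\pi\delta^2$ if $j=k$ and $\varphi(\|Z_j^{(n)}(s-)-Z_k^{(n)}(s-)\|)$ otherwise. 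Assembling these factors and matching them against the Kronecker block structure of $B=F\otimes\Sigma$ displayed in \eqref{eq:twobytwo} identifies the three predictable compensators as $\int_0^t B_{x_j,x_k}(Z^{(n)}(s))\,ds$, $\int_0^t B_{x_j,y_k}(Z^{(n)}(s))\,ds=0$, and $\int_0^t B_{y_j,y_k}(Z^{(n)}(s))\,ds$, respectively. This proves \eqref{eq:compcov} in the local-martingale sense, and since the compensators are uniformly bounded on compact intervals each of the three $\xi$-processes is in fact a genuine martingale.

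The main obstacle I anticipate is not the computation itself but the bookkeeping around the conditioning: one must confirm that the mark $(w_i,\theta_i,H_1(i),\ldots,H_m(i))$ is independent of the $\fil_{t_i-}$-measurable vector $Z^{(n)}(t_i-)$, so that the conditional expectation of each jump product factorizes cleanly into a (random) area factor times a product of deterministic marginal expectations in the angular and Bernoulli variables. This independence is built into the construction of $\overline{\chi}$, in which the marks are attached to the PPP atoms as i.i.d. samples; it is the same independence that also underwrites Assumption \ref{asmp:exchangeable2}.
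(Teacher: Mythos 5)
Your proof is correct and takes essentially the same route as the paper's: both establish the marginal martingale property from the vanishing predictable compensator (since $\nu_0(\cos\theta)=\nu_0(\sin\theta)=0$), and both identify $\int_0^t B_{\cdot,\cdot}(Z^{(n)}(s))\,ds$ as the predictable compensator of the product by integrating the jump-product integrand against the intensity measure of $\overline{\chi}$, factoring into spatial, angular, and Bernoulli pieces. Your route through the integration-by-parts decomposition $M_jM_k = \text{(local martingale)} + [M_j,M_k]$ is a more explicit rendering of the step the paper cites from Jacod--Shiryaev, and your remarks on $L^2$-boundedness to upgrade local to genuine martingales fill in a detail the paper leaves implicit.
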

	
	\begin{proof}[Proof of Lemma \ref{lem:compensators}] We start by arguing that ${M}_j^{X,n}$ and ${M}_j^{Y,n}$ are martingales. For every $\omega\in \Omega$, the processes ${M}_j^{X,n}$ and ${M}_j^{Y,n}$ are stochastic integrals of predictable integrands: for $\theta \in [0, 2\pi]$ and $h_j\in \{0,1\}$, $j\in [m]$, 
		\[
		\begin{split}
			g^{X,n}_j(t,\omega)&=\frac{1}{\sqrt{n}} h_j\cos(\theta)1\left\{ \normU{Z_j^{(n)}(t-)- w} \le 1  \right\}\quad \text{and}\\
			g^{Y,n}_j(t,\omega)&=\frac{1}{\sqrt{n}} h_j\sin(\theta)1\left\{ \normU{Z_j^{(n)}(t-)- w} \le 1  \right\}, \quad \omega \in \Omega,
		\end{split}
		\]
		with respect to the Poisson random measure $\overline{\chi}$ (see \cite[Chapter II, Section 1]{JS}). Then, the claim follows from \cite[Chapter II, Lemma 1.21]{JS}, since, the predictable compensator of the processes are given by (respectively)
		\[
		\begin{split}
			\frac{1}{\sqrt{n}}\int_0^t n p \nu_0\left( \cos(\theta) \right) \Ar{Z_j^{(n)}(s-) + U}ds \quad \text{and}\\
			\frac{1}{\sqrt{n}}\int_0^t n p \nu_0\left( \sin(\theta) \right) \Ar{Z_j^{(n)}(s-) + U}ds.
		\end{split}
		\]
		Both expressions above are zero since $\nu_0(\cos\theta)=0=\nu_0(\sin\theta)$. 
		
		For the reader who might be uncomfortable with the stochastic calculus for Poisson processes, simply replace the Poisson process by a discrete time process with independent increments to derive the above conclusion ``by hand''.  This is true for the argument below as well.

		For the processes listed in \eqref{eq:compcov}, let us argue the martingale property of the first process in the display and leave the rest of the similar arguments for the reader. Consider the process $\xi_{j,k}^{X,X,n}$. Since ${M}_j^{X,n}(t)$ and ${M}_k^{X,n}(t)$ are both martingales, we simply need to argue  that the predictable compensator for the product of the two process at time $t$ is exactly $\int_0^t B_{x_j,x_k}\left(Z^{(n)}(s) \right)ds$. 
		However, since ${M}_j^{X,n}(t)$ and ${M}_k^{X,n}(t)$ are both stochastic integrals of predictable integrands with respect to a Poisson random measure, the predictable compensator up to time $t$ is given by the integral of the product of the integrands with respect to the intensity measure:
		\[
		\int_0^t \left( p 1\{j=k\} + p^2 1\{j\neq k \}  \right) \sigma^2 \Ar{\left\{ Z_j^{(n)}(s-) + U   \right\}\cap \left\{ Z_k^{(n)}(s-) + U \right\} }ds.
		\]
		The above is, of course, exactly equal to $\int_0^t B_{x_j,x_k}\left(Z^{(n)}(s) \right)ds$. 
	\end{proof}

	Returning to the proof of Theorem \ref{thm:jpoint}, recall that $\mathcal{C}^{(4m)}[0, \infty)$, the space of continuous functions from $[0,\infty)$ to $\rr^{4m}$. Endow the space with a right-continuous natural filtration. We will use this as our sample space. Consider this sample space along with a probability measure that is any weak limit obtained from the joint weak convergence of the vector of processes in \eqref{eq:limittilde} to the processes in \eqref{eq:limitproc}. 
	
	It follows by localization that, under any weak limit in $\mathcal{C}^{(4m)}[0,\infty)$, each ${M}^X_j, {M}^Y_j$ is a continuous local martingale such that each of the following is also a continuous local martingale:
	\[ 
	\begin{split}
		\xi_{j,k}^{X,X}(t)&:= {M}_j^{X}(t) {M}_k^{X}(t) - \int_0^t B_{x_j,x_k}\left(Z(s) \right)ds.\\
		\xi_{j,k}^{X,Y}(t)&:= {M}_j^{X}(t) {M}_k^{Y}(t) - \int_0^t B_{x_j,y_k}\left(Z(s) \right)ds.\\
		\xi_{j,k}^{Y,Y}(t)&:= {M}_j^{Y}(t) {M}_k^{Y}(t) - \int_0^t B_{y_j,y_k}\left(Z(s) \right)ds.
	\end{split}
	\]
	Here $Z_j(\cdot)=\left( X_j(\cdot), Y_j(\cdot) \right)$ and $Z(\cdot)=\left( Z_1(\cdot), \ldots, Z_m(\cdot) \right)$.
	
	We now use \cite[Chapter 3, Theorem 4.2]{KS91} on the representation of continuous local martingales as stochastic integrals. According to this result, on a possibly extended probability space, one can find a $2m$ dimensional Brownian motion $\left( \beta_1, \ldots, \beta_{2m}   \right)$ such that for each $j\in [m]$ we have
	\[
	\begin{split}
		{M}_j^{X}(t) &= \int_0^t A_{X_j}\left( Z_1(s), \ldots, Z_m(s)  \right)\cdot d\beta(s) \quad \text{and}\\
		{M}_j^{Y}(t) &= \int_0^t A_{Y_j}\left( Z_1(s), \ldots, Z_m(s)  \right) \cdot d\beta(s).
	\end{split}
	\]
	This settles the local martingale component in the SDE representation \eqref{eq:limitsde}. That the finite variation components are given by local times follow from Lemma \ref{lem:1pointlimit}. Finally, uniqueness in law from Theorem \ref{lem:sdesoln} below completes the proof. 
\end{proof}

\begin{thm}\label{lem:sdesoln} Fix arbitrary initial points $z_1, \ldots, z_m$ in $[0,1]^2$. Under Assumption  \ref{asmp:unbiased} and when $p \in (0,1)$, for any $m \ge 1$, there is a pathwise unique strong solution to the stochastic differential equation \eqref{eq:limitsde}, starting at $\left(z_1, \ldots, z_m\right)$, under which the process is strong Markov. In particular, the law of such a solution is unique. 
\end{thm}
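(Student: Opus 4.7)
The plan is to reduce \eqref{eq:limitsde} to a fixed-point problem for a contraction on path space, handling reflection at the boundary coordinatewise via the Skorokhod map $\Gamma_{0,1}$ of Definition \ref{defn:skorokhodmap}. The product structure of $[0,1]^{2m}$ is essential: reflection decomposes into $2m$ independent one-dimensional Skorokhod problems on $[0,1]$, each Lipschitz in its input by \eqref{eq:explicitskorokhod}. All the nontrivial analytic work will lie in verifying that the diffusion coefficient $A(z) = B(z)^{1/2}$ is globally Lipschitz on $[0,1]^{2m}$, which I expect to be the main obstacle.

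First I would verify that $B(\cdot)$ is Lipschitz and uniformly positive definite. Lipschitz continuity of $B$ is immediate from Lemma \ref{lem:areaint}: $\varphi$ is $C^1$ on $(0, 2\delta)$ with bounded derivative and continuous on $[0,\infty)$, hence each $F_{ij}$ is Lipschitz in $(z_1,\ldots,z_m)$ and so is $B = F \otimes \Sigma$. For uniform positive definiteness, observe that $\tilde F_{ij} := \varphi(\norm{z_i - z_j})$ equals the $L^2$ inner product $\langle \mathbf{1}_{z_i + U}, \mathbf{1}_{z_j + U}\rangle$, so $\tilde F$ is a Gram matrix and hence positive semidefinite. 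Writing $F = p(1-p)\pi\delta^2 I + p^2 \tilde F$ then yields $F \succeq p(1-p)\pi\delta^2 I$ uniformly in $(z_1,\ldots,z_m)$; the hypothesis $p \in (0,1)$ is crucial here. Since the eigenvalues of $B = F \otimes \Sigma$ are pairwise products of those of $F$ and $\Sigma$ by \cite[Theorem 4.2.12]{HJ94}, we obtain $B(z) \succeq \sigma^2 p(1-p)\pi\delta^2 I > 0$ uniformly in $z$.

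With $B(\cdot)$ taking values in a fixed compact subset of strictly positive definite symmetric matrices and depending Lipschitz-continuously on $z$, the positive square root $A = B^{1/2}$ is also Lipschitz. One justification uses the operator integral representation $B^{1/2} = \pi^{-1} \int_0^\infty \lambda^{-1/2}\bigl[I - \lambda(\lambda I + B)^{-1}\bigr]d\lambda$, whose Fr\'echet derivative can be bounded pointwise in terms of the uniform spectral lower bound on $B$; alternatively, one may apply holomorphic functional calculus to $\sqrt{\cdot}$ along a contour encircling the joint spectrum and avoiding the origin.

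Given a Lipschitz (and bounded) $A$, existence and pathwise uniqueness of a strong solution to \eqref{eq:limitsde} follow from a standard Picard iteration. Concretely, given a candidate process $Z^{(n)}$, set $\psi^{(n+1)}_{X_j}(t) := x_j + \int_0^t A_{X_j}(Z^{(n)}(s))\cdot d\beta(s)$ and $X^{(n+1)}_j(t) := \Gamma_{0,1}(\psi^{(n+1)}_{X_j})(t)$, with the analogous definitions for the $y$-coordinates. The Lipschitz property of $\Gamma_{0,1}$ from \eqref{eq:explicitskorokhod}, together with the Lipschitz bound on $A$ and the Burkholder--Davis--Gundy inequality, turns this into a contraction in the $L^2$ sup-norm on $[0,T]$ for sufficiently small $T$, which then extends to $[0,\infty)$ by concatenation. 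Pathwise uniqueness implies uniqueness in law by Yamada--Watanabe, and the strong Markov property follows because the solution is a measurable functional of the driving Brownian path, applied in particular after any stopping time. As emphasized, the only nonstandard ingredient is the global Lipschitz property of $A = B^{1/2}$, which the Gram matrix observation reduces to standard functional calculus.
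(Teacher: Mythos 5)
Your proposal is correct and arrives at the same key technical ingredients as the paper, but packages the argument differently at the outer level. The paper does not run Picard iteration by hand: it invokes Theorem~4.3 of Ramanan's Extended Skorokhod Problem framework, which reduces well-posedness of the reflected SDE to (a) well-definedness and Lipschitz continuity of the Skorokhod map on the domain, and (b) Lipschitz continuity of the drift and diffusion coefficients. Condition (a) holds because the Skorokhod map on the box $[0,1]^{2m}$ decomposes coordinatewise into the explicit Lipschitz map $\Gamma_{0,1}$ of \eqref{eq:explicitskorokhod}, exactly as you observe. For (b), the paper, like you, must show $A=B^{1/2}$ is Lipschitz, and does so by citing a black-box lemma (\cite[Lemma 21.10]{schilling14}: a uniformly elliptic, Lipschitz matrix field has Lipschitz square root) together with Lemma~\ref{lem:unifellip} for uniform ellipticity and Lemma~\ref{lem:areaint} (via convexity of $\varphi$) for the Lipschitz continuity of $B$. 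Your Gram-matrix reformulation of uniform ellipticity, $F = p(1-p)\pi\delta^2 I + p^2\tilde F$ with $\tilde F_{ij}=\langle \mathbf{1}_{z_i+U},\mathbf{1}_{z_j+U}\rangle_{L^2}\succeq 0$, is a clean repackaging of the paper's integral identity in Lemma~\ref{lem:unifellip}; it is the same inequality. Your observation that $\varphi$ is $C^1$ with bounded derivative (indeed $\varphi'(r)=-\sqrt{4\delta^2-r^2}$) is a perfectly valid alternative to the paper's appeal to convexity for the Lipschitz bound. Net comparison: the paper's route is more economical, since Ramanan's theorem and Schilling's lemma absorb both the Picard iteration and the square-root Lipschitz estimate; your route is more self-contained and would also yield an explicit bound on the Lipschitz constant of $A$, but you take on the burden of verifying the contraction and the square-root regularity yourself. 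Both are sound, and both hinge on the same two facts: the product structure of the domain (so reflection is coordinatewise and Lipschitz) and the $p\in(0,1)$ hypothesis (so $B$ is uniformly elliptic).
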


The proof requires the following lemma.

\begin{lemma}\label{lem:unifellip} The diffusion matrix  $B(z_1, \ldots, z_m)$ is uniformly elliptic over $[0,1]^{2m}$.
\end{lemma}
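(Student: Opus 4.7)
The plan is to exhibit an explicit positive-definite lower bound on $B$ that does not depend on the configuration $(z_1,\ldots,z_m)$. Since $\Sigma=\sigma^{2}I_{2}$ under Assumption~\ref{asmp:unbiased}, and the eigenvalues of the Kronecker product $B=F\otimes\Sigma$ are the pairwise products of eigenvalues of $F$ and $\Sigma$ by \cite[Theorem 4.2.12]{HJ94}, it suffices to prove that $F(z_1,\ldots,z_m)$ has smallest eigenvalue bounded below by a strictly positive constant, uniformly in $(z_1,\ldots,z_m)$.

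To do this I would decompose
\[
F(z_1,\ldots,z_m)=p(1-p)\pi\delta^{2}\,I_{m}+p^{2}\,G(z_1,\ldots,z_m),
\]
where $G$ is the $m\times m$ matrix with entries $G_{ij}=\Ar{\{z_i+U\}\cap\{z_j+U\}}$ for all $i,j\in[m]$ (including the diagonal, where $G_{ii}=\pi\delta^{2}$). Indeed, comparing with the definition \eqref{eq:whatisF} one sees the diagonal contributes $p\pi\delta^{2}=p(1-p)\pi\delta^{2}+p^{2}\pi\delta^{2}$ and the off-diagonal contributes $p^{2}\varphi(\|z_i-z_j\|)=p^{2}G_{ij}$.

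The key step is to recognize $G$ as a Gram matrix in $L^{2}(\rr^{2})$. Writing $\mathbf{1}_{z_i+U}$ for the indicator of the disc $z_i+U$, we have
\[
G_{ij}=\int_{\rr^{2}}\mathbf{1}_{z_i+U}(x)\,\mathbf{1}_{z_j+U}(x)\,dx=\bigl\langle\mathbf{1}_{z_i+U},\mathbf{1}_{z_j+U}\bigr\rangle_{L^{2}(\rr^{2})}.
\]
As a Gram matrix, $G$ is automatically positive semidefinite. Consequently
\[
F(z_1,\ldots,z_m)\succeq p(1-p)\pi\delta^{2}\,I_{m},
\]
and the bound is strictly positive because $p\in(0,1)$ by hypothesis. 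Combining with $\Sigma=\sigma^{2}I_{2}$ yields
\[
B(z_1,\ldots,z_m)\succeq \sigma^{2}p(1-p)\pi\delta^{2}\,I_{2m},
\]
uniformly over $(z_1,\ldots,z_m)\in[0,1]^{2m}$ (in fact over all of $\rr^{2m}$), which is the desired uniform ellipticity. No obstacle is expected; the only conceptual point is to spot the Gram-matrix representation of $G$, which turns the geometric quantity ``intersection area of two discs'' into a manifestly nonnegative quadratic form.
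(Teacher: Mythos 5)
Your proof is correct and is essentially the paper's argument in slightly different clothing: the paper expands $\xi' F \xi$ directly and observes that the cross terms equal $p^2 \int_{\rr^2}\bigl(\sum_j \xi_j \mathbf{1}_{z_j+U}(v)\bigr)^2 dv \ge 0$, which is precisely the statement that your $G$ is a Gram matrix, leaving the same lower bound $p(1-p)\pi\delta^2\|\xi\|^2$. Nice observation, but not a genuinely different route.
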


\begin{proof} Since $B=F\otimes \Sigma$ is the Kronecker product of $F$ and $\Sigma$, the $2m$ eigenvalues of $B$ are the pairwise product of the $m$ eigenvalues of $F$ and the two eigenvalues of $\Sigma$. Since the eigenvalues of $\Sigma$ are both $\sigma^2$, they are both positive by Assumption \ref{asmp:unbiased}. Therefore, to prove the lemma, it suffices to show uniform ellipticity for the matrix $F(z_1, \ldots, z_m)$.
	
	Consider any $\xi=\left( \xi_1, \ldots, \xi_m   \right) \in \rr^{m}$ and note that 
	\[
	\begin{split}
		\xi' &F(z_1, \ldots, z_m) \xi \\
		&= \sum_{i\in [m]} \sum_{j \in [m]} p^2  \xi_i \xi_j\Ar{\{ z_i + U\} \cap \{ z_j+U \}} + (p-p^2) \pi \delta^2 \sum_{i\in [m]} \xi_i^2\\
		&= p^2 \int_{\rr^2} \left[  \sum_{i=1}^m \sum_{j=1}^m \xi_i \xi_j 1\{ v\in  z_i + U\} 1\{ v\in  z_j+U \}    \right] dv + p(1-p) \pi \delta^2\norm{\xi}^2\\
		&=p^2\int_{\rr^2} \left(  \sum_{j=1}^m \xi_j 1\{ v\in z_j + U  \}   \right)^2 dv +  p(1-p) \pi \delta^2 \norm{\xi}^2 \ge p(1-p) \pi \delta^2 \norm{\xi}^2.
	\end{split}
	\]
	Since $p(1-p) > 0$ this proves uniform ellipticity. 
\end{proof}

\comment{
	For $p=1$, fix an $\varepsilon>0$. We will show first show that if the set $\{ z_1, \ldots, z_m  \}$ have all distinct elements, then $F(z_1, \ldots, z_m)$ is nonsingular. To see this, consider any $\xi=\left( \xi_1, \ldots, \xi_m   \right) \in \rr^{m}$ and note as before
	\[
	\begin{split}
		\xi'F(z_1, \ldots, z_m) \xi &= \int_{\rr^2} \left(  \sum_{j=1}^m \xi_j 1\{ v\in z_j + U  \}   \right)^2 dv.
	\end{split}
	\]
	If the above expression is zero, then $  \sum_{j=1}^m \xi_j 1\{ v\in z_j + U  \} =0$, Lebesgue almost everywhere. Assume that this is indeed so. We will argue that, in that case, $\xi=0$. 
	
	Since all $z_i$s are distinct, there must be a vector $b$ such that $z_i \mapsto \iprod{b, z_i}$ has a unique maximizer in the finite set $\{ z_1, \ldots, z_m  \}$. By relabeling, if necessary, say $z_m$ is that maximizer. Then, by the convexity of $U$, there exists a $c\in \rr$ such that $\{z:\in \rr^2\; \iprod{b,z} \ge \iprod{b, z_m} + c \}\cap \{ z_m+U\}$ has positive Lebesgue measure while the complementary affine half-space $\{ z\in \rr^2:\; \iprod{b,z} < \iprod{b, z_m} + c   \}$ contains the set $\cup_{j< m} \{ z_j + U \}$. In particular, for all $v\in \{z:\; \iprod{b,z} \ge \iprod{b, z_m} + c \}\cap \{z_m + U \}$ we get $\sum_{j=1}^m \xi_j 1\left\{ v \in z_j + U \right\}= \xi_m$.
	Therefore $\xi_m$ must be zero. 
	
	Once $\xi_m$ is zero, we delete the corresponding $z_m$ and related expressions and consider the fact that we still have
	\[
	\int_{\rr^2} \left( \sum_{j=1}^{m-1} \xi_j 1\{ v\in z_j + U \} \right)^2dv=0.
	\]
	By repeating the above arguments and reducing dimensions one after another, we obtain that the entire vector $\xi$ must be zero. This shows $F$ is non-singular. 
	
	The rest follows from the fact that the smallest eigenvalue of $F$ is a continuous function of its entries. The entries of $F$ are continuous functions of $(z_1, \ldots, z_m)$. Therefore, the smallest eigenvalue of $F$ is uniformly continuous over $[0,1]^{2m}$. In particular, it must achieve a strictly positive infimum over every $\mathfrak{G}(\varepsilon)$.
}

\begin{proof}[Proof of Theorem \ref{lem:sdesoln}] We verify the assumptions of \cite[Theorem 4.3]{Ramanan06} which has been proved for the so-called Extended Skorokhod Problem (ESP). In particular, it holds for the case of Skorokhod problems. 
	
	Our Skorokhod map is coordinatewise given by \eqref{eq:explicitskorokhod}. Therefore, it is well-defined and Lipschitz. Therefore, it suffices to check Assumption 4.1 (1) in \cite{Ramanan06}. Since the drift is zero, we need to only check that the map $(z_1, \ldots, z_m) \mapsto A\left(z_1, \ldots, z_m\right)$, as a function on $[0,1]^{2m}$, is Lipschitz. 
	By \cite[Lemma 21.10]{schilling14} and the uniform ellipticity condition from Lemma \ref{lem:unifellip} it suffices to check that the map $(z_1, \ldots, z_m) \mapsto B(z_1, \ldots, z_m)$ is Lipschitz. This, in turn, follows from checking via Lemma \ref{lem:areaint} that the map $(z_1, \ldots, z_m) \mapsto F_{ij}(z_1, \ldots, z_m)$ for each $(i,j)$ pair is Lipschitz which follows from the convexity of the function $\varphi$.  
\end{proof}

\section{Estimates on mixing time of shuffling for the jump diffusion}\label{sec:coupling}

We now define the limiting lazy {\gands} model. Let us recall the diffusion model from Section \ref{sec:diffusionlimit}. Consider a suitable probability space $\left( \Omega, (\mathcal{F}_t)_{t\ge 0}, P \right)$ with the usual filtration that supports $2m$ many standard linear Brownian motions $\left( \overline{\beta}_1, \ldots, \overline{\beta}_{2m}  \right)$ and an independent PPP on $(0, \infty)\times \oD$ with rate given by the product Lebesgue measure on $(0, \infty)$ and the uniform probability distribution on $\oD$. That is the atoms of the PPP can be arranged as $(t_i, w_i)$, $i\in \NN$, where $0 < t_1 < t_2 < \ldots$ are the jumps of a Poisson process of rate one and the sequence $(w_i,\; i \in \NN)$ is {i.i.d.}, sampled uniformly from $\oD$, independently of $(t_i,\; i \in \NN)$.

Suppose the initial values $z_1=(x_1, y_1), \ldots, z_m=(x_m, y_m)$.
Let $t_0=0$ and define $Z(t)=\left(  Z_1(t), \ldots, Z_m(t)   \right)$, $t \in [0, t_1)$, as the solution of SDE \eqref{eq:limitsde} with initial conditions $X_j(0)=x_j$ and $Y_j(0)=y_j$, $j \in [m]$, and the Brownian motions given by $\overline{\beta}_j$, $j\in [2m]$. Then, inductively, for $i=1,2,\ldots$, on $[t_i, t_{i+1})$, condition on $\mathcal{F}_{t_i}$, define initial conditions
\[
z_j(t_i):=G^{w_i}\left( Z_j(t_i-)   \right),\quad j\in [m],
\]
and let $Z(t+ t_i)$, $t\in [0, t_{i+1}-t_i)$ be the solution of SDE \eqref{eq:limitsde} with initial points $z_j(t_i)$, $j \in [m]$, and  the Brownian motions given by $\beta^{(i)}_j(t)=\overline{\beta}_j(t_i+t)-\overline{\beta}_j(t_i)$. This gives us a jump-diffusion process $Z(t)=\left(  Z_1(t), \ldots, Z_m(t)   \right)$, $t \in [0, \infty)$ with RCLL paths adapted to $\left( \Omega, (\mathcal{F}_t)_{t\ge 0}, P \right)$. The process is clearly strong Markov. Let $\jdiffdist_m(z_1, \ldots, z_m)$ denote the law of the jump-diffusion process described above starting from the initial points $(z_1, \ldots, z_m)\in D^m$.

\begin{lemma}\label{lem:exchangeable3}
	Assumption \ref{asmp:exchangeable2} in Section \ref{sec:discrete} holds for the jump-diffusion process $Z$. 
\end{lemma}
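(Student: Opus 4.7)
The plan is to deduce the lemma from a stronger equivariance property: the law $\jdiffdist_m(z_1,\ldots,z_m)$ is $\perm_m$-equivariant, in the sense that permuting the particle indices at the level of the initial condition corresponds to the same permutation of the coordinate processes. Once this is established, Assumption \ref{asmp:exchangeable2} is the special case of the transposition $\pi = (i\,j)$ applied when $z_i = z_j$, because then the initial condition is fixed by $\pi$ and the identity becomes ``law of $V$ equals law of $Z$''.

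First I would verify that the reflected SDE \eqref{eq:limitsde} is equivariant. For any $\pi\in\perm_m$, let $P_\pi$ denote the orthogonal $2m\times 2m$ permutation matrix that permutes the blocks $(x_k,y_k)$ according to $\pi$. Because $F_{ab}$ in \eqref{eq:whatisF} depends only on the unordered pair $\{z_a,z_b\}$, one has $B(P_\pi z) = P_\pi B(z) P_\pi^T$, and by uniqueness of the positive definite square root (using that $P_\pi$ is orthogonal) the same identity transfers to $A$. If $Z$ solves \eqref{eq:limitsde} with Brownian motion $\beta$ starting from $z$, then $\widetilde Z := P_\pi Z$ satisfies the same SDE driven by $\widetilde\beta := P_\pi\beta$ (which is again a $2m$-dimensional Brownian motion) and starting from $P_\pi z$; the coordinatewise reflection at the faces of $[0,1]^{2m}$ is manifestly preserved by the block swap, with the local times $L^{X,i}_j, L^{Y,i}_j$ simply relabeled. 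By the uniqueness in law from Theorem \ref{lem:sdesoln}, $\widetilde Z$ has law $\diffdist_m(P_\pi z)$.

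Next I would handle the gather jumps. Since $G^{w}$ acts on each card position in exactly the same way with no dependence on the index, the deterministic map $(z_1,\ldots,z_m)\mapsto (G^w(z_1),\ldots,G^w(z_m))$ commutes with the index permutation $P_\pi$. Combining this with the previous step, and using that the Poisson atoms $(t_i,w_i)$ are shared across all particles, the strong Markov construction of the jump-diffusion given just before the lemma yields $\jdiffdist_m(P_\pi z) = (P_\pi)_\ast\jdiffdist_m(z)$ as a pushforward of laws on path space. Specializing to $\pi = (i\,j)$ under the hypothesis $z_i=z_j$ gives $(P_\pi)_\ast\jdiffdist_m(z) = \jdiffdist_m(z)$, which is precisely the identity claimed by Assumption \ref{asmp:exchangeable2}.

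The main obstacle I anticipate is executing the SDE step carefully: the matrix identity $A(P_\pi z) = P_\pi A(z) P_\pi^T$ is immediate, but one must check that the reflection terms in the Skorokhod decomposition of \eqref{eq:limitsde} are genuinely permuted by the block swap. This should follow routinely from the fact that the reflection is coordinatewise on $[0,1]^{2m}$ and the one-dimensional Skorokhod map from Definition \ref{defn:skorokhodmap} is deterministic and independent of which particle index the coordinate belongs to, so that the pair $(\widetilde Z, P_\pi \beta)$ satisfies the full system of coupled Skorokhod equations with input $P_\pi z$.
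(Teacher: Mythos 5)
Your proof is correct, but it proceeds by a genuinely different route than the paper's. The paper's own argument is a transfer argument: it observes that Assumption \ref{asmp:exchangeable2} holds trivially for the discrete {\gands} model (by exchangeability of i.i.d.\ coin tosses, as in Lemma \ref{lem:exchangeable}), notes that swapping two coordinate paths is a Lipschitz map on path space under the locally uniform topology, invokes the weak convergence of Theorem \ref{thm:jpoint} to push exchangeability to the limiting diffusion, and then handles the gather jumps by iteration over the Poisson epochs. Your argument instead works intrinsically with the limiting object: you establish the equivariance $B(P_\pi z) = P_\pi B(z) P_\pi^T$ directly from \eqref{eq:whatisF}, deduce $A(P_\pi z) = P_\pi A(z) P_\pi^T$ from uniqueness of the positive-definite square root conjugated by the orthogonal matrix $P_\pi$, observe that $(P_\pi Z, P_\pi\beta)$ is a weak solution of \eqref{eq:limitsde} with initial data $P_\pi z$ (the coordinatewise Skorokhod reflection and local-time terms being relabeled consistently), and conclude via the uniqueness in law furnished by Theorem \ref{lem:sdesoln}. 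The gather step is handled the same way in both approaches. What your route buys is self-containedness at the level of the jump-diffusion itself: it does not rely on the convergence result, only on the well-posedness of the SDE and the algebraic structure of its coefficients, and it gives the full $\perm_m$-equivariance of $\jdiffdist_m$ rather than only the transposition-on-coincidence statement. What the paper's route buys is brevity: having already proved weak convergence, it piggybacks on the near-trivial exchangeability of the prelimiting discrete model. Both arguments are sound; yours does require (and you correctly flag) the care of checking that the reflection terms permute under $P_\pi$, which follows because the Skorokhod map on $[0,1]$ acts coordinatewise and deterministically.
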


\begin{proof} Assumption \ref{asmp:exchangeable2} is clearly true for the discrete {\gands} model without gathering as in the beginning of Section \ref{sec:diffusionlimit}. Hence, by taking weak limit, it is true for the diffusion satisfying \eqref{eq:limitsde}. Stopping the diffusion at an independent exponential time and gathering at an independently chosen point in $D$ preserves the property. 
	Now, by iterating the argument, the statement is true for all intervals $[t_i, t_{i+1})$, $i \in \NN$, and, therefore, over the entire $[0, \infty)$.
\end{proof}



\begin{proof}[Proof of Theorem \ref{thm:mixingtime}] 
	
	In order to employ Theorem \ref{thm:mixdiscrete}, it suffices to find a $\zeta^*$ that stochastically dominates $\overline{\tau}^{}_{xy}=\overline{\tau}^{m}_{xy}$ from \eqref{eq:whatistaubar} for any $x\neq y$. Without loss of generality, take $m=2$.

	The strategy is the following. Consider $\norm{Z_1(t)-Z_2(t)}$ during $[0, t_1)$. Ignore the possibility that this norm hits zero during this interval. Consider 
	\[
	\left\{ Z_1(t_1-) + U  \right\} \cap \left\{ Z_2(t_1-) + U  \right\}.
	\]
	The area of this set is given by Lemma \ref{lem:areaint}. $Z_1(t_1)=Z_2(t_1)$ if the random point $w_1$ lies in this set (since the gathering will place both cards at $w_1$). In this event we get $\overline{\tau}_{12} \le t_1$, otherwise we restart. Hence, at each $t_i$ we toss a coin that indicates if $w_i \in \left\{ Z_1(t_i-) + U  \right\} \cap \left\{ Z_2(t_i-) + U  \right\}$. Suppose we bound this probability from below by $\mathfrak{p}>0$ (say), irrespective of $z_1, z_2$. Then, by the strong Markov property, each such coin toss is independent, and $\overline{\tau}_{12}$ is stochastically dominated by $t_J$, where $J$ is a geometric random variable with rate $\mathfrak{p}$. $t_J$ is the sum of a random number of exponential one random variables. Such a random variable has a density everywhere on $[0,\infty)$, in particular, condition \eqref{eq:exptail} is satisfied where $\alpha$ is simply the density at zero. Now, given $J=j$, $t_J$ is a gamma random variable with mean $j$ and scale $1$. For all $j \ge 2$, this random variable has density zero at the origin. Given $J=1$, $t_J$ is exponential with rate one, which has a density one at the origin. Thus, the density at the origin of $t_J$ is $1\cdot P(J=1)=\mathfrak{p}$. Thus, we can take $\alpha=\mathfrak{p}$ in \eqref{eq:exptail}. 
	Hence, by Theorem \ref{thm:mixdiscrete}, $\tmix(\epsilon)=\frac{1}{\mathfrak{p}} \log(m/\epsilon)$. 
	
	What remains is to find such a $\mathfrak{p}$. This is done in the rest of this article.
\end{proof}

Consider any of the intervals $[t_i, t_{i+1})$, condition on $\mathcal{F}_{t_i}$ and shift time $t \mapsto t - t_i$, for $t\in [t_i, t_{i+1})$. During this interval the jump-diffusion $Z$ is simply a diffusion stopped at an independent rate one exponential time $\varrho:=t_{i+1}-t_i$. Thus, by the strong Markov property, we can assume that $i=0=t_i$ and $\varrho:=t_1$ is an exponential one random variable, independent of the diffusion $Z(t)$, $t\in [0, \varrho)$.

Express this stopped diffusion $Z(\cdot)=\left(Z_1(\cdot), Z_2(\cdot) \right)$, where $Z_1=(X_1, Y_1)$, $Z_2=(X_2, Y_2)$, which is a solution of \eqref{eq:limitsde}, in the following way: 
\begin{eqnarray*}
	X_1(t) &=& W_1(t) + L_1^{X,0}(t) - L_1^{X,1}(t),\\
	Y_1(t) &=&  B_1(t) + L_1^{Y,0}(t) - L_1^{Y,1}(t),\\
	X_2(t) &=&  W_2(t) + L_2^{X,0}(t) - L_2^{X,1}(t),\\
	Y_2(t) &=& B_2(t) + L_2^{Y,0}(t) - L_2^{Y,1}(t).
\end{eqnarray*}
Here $\left(W_1, B_1, W_2, B_2\right)$ is a four-dimensional continuous semimartingale process such that each coordinate process is marginally distributed as a Brownian motion with constant diffusion coefficients $p\pi\delta^2\sigma^2$ and initial values $x_1, y_1, x_2, y_2$, respectively. But they are not all independent. This is because, in the discrete model, the cards that are under the palm tend to move together during the spread moves leading to a positive correlation between their increments, as compared to zero correlation when not under the palm. 


Consider the two processes 
\[
\beta_1^0(t) := W_1(t) - W_2(t), \quad \beta_2^0(t):= B_1(t) - B_2(t).
\]
Let $\left[X,Y\right](t)$ denote the mutual covariation between two continuous semimartingales $X$ and $Y$ over the time interval $[0,t]$. By \eqref{eq:twobytwo}, 
\[
\begin{split}
	[\beta_1^0, \beta_2^0](t)=0,\quad [\beta_1^0, \beta_1^0](t)=[\beta_2^0, \beta_2^0](t)=2\sigma^2\int_0^t\left(p\pi \delta^2 - F_{12}\left( Z_1(s), Z_2(s) \right)   \right)ds.
\end{split}
\]
Let 
\[
\Gamma(t)= 2\sigma^2\int_0^t\left(p\pi \delta^2 - F_{12}\left( Z_1(s), Z_2(s) \right)   \right)ds.
\]
Since $0\le F_{12}\left( z_1, z_2 \right)\le p^2 \Ar{U}=p^2\pi \delta^2$ for all $(z_1, z_2)\in D^2$, then 
\eq\label{eq:bndGamma}
2\sigma^2p\pi \delta^2 \ge \Gamma'(t) \ge 2\sigma^2 p(1-p)\pi \delta^2 > 0, \quad \text{and}\quad \Gamma(t) \ge 2\sigma^2 p(1-p)\pi \delta^2 t.
\en
Hence $\Gamma$ is strictly increasing and $\lim_{t\rightarrow \infty} \Gamma(t)=\infty$ almost surely. By Knight's Theorem (see \cite[page 179]{KS91}), on the same probability space, there exists a pair of independent standard Brownian motions $\beta_1,\beta_2$ such that 
\[
\beta^0_1(t) = \beta_1\left( \Gamma(t) \right), \quad \beta^0_2(t) = \beta_2\left( \Gamma(t)  \right).
\]

Let $R(t)= \sqrt{\beta^2_1(t) + \beta^2_2(t)}$. Then $R$ is a $2$-dimensional Bessel process starting from $\norm{z_1-z_2}$. Now, $X_1$ and $X_2$ are doubly reflected Brownian motions with continuous noises $W_1$ and $W_2$, respectively (as in Definition \ref{defn:skorokhodmap}). The maps $W_1\mapsto X_1$ and $W_2\mapsto X_2$ are Lipschitz in the locally uniform metric on $\mathcal{C}[0, \infty)$. See \cite[Remark 4.2 (ii)]{Saisho}. Therefore $\left( X_1(t)- X_2(t)  \right)^2 \le \left( W_1(t) - W_2(t)  \right)^2$. Similarly $\left( Y_1(t)- Y_2(t)  \right)^2 \le \left( B_1(t) - B_2(t)  \right)^2$. Hence 
\[
\norm{Z_1(t) - Z_2(t)} \le \sqrt{\left(\beta^0_1(t)\right)^2 + \left(\beta^0_2(t)\right)^2}= R\left(  \Gamma(t) \right), \quad 0\le t < \varrho,
\]
where $\varrho$ is the independent exponential one random variable.

At $\varrho$ pick $w$ uniformly from $\oD$ independently of the process $Z(t)$, $t\in [0, \varrho)$, and $\varrho$ itself. 
Recall that $U$ is the closed disc of radius $\delta$ around the origin. Consider the Bernoulli random variable
\eq\label{eq:bernoullis}
\begin{split}
	\chi_1&:=1\left\{  w \in \{ Z_1(\varrho-) + U \} \cap \{ Z_2(\varrho-) + U  \} \right\}.
\end{split}
\en
Given $Z(t)$, $t\in [0, \varrho)$, the probability that $\{\chi_1=1\}$ is given by 
\[
\frac{1}{\Ar{\oD}} \Ar{ \{ Z_1(\varrho-) + U \} \cap \{ Z_2(\varrho-) + U  \}}= \frac{1}{\Ar{\oD}} \varphi\left( \norm{Z_1(\varrho-) - Z_2(\varrho-)} \right),
\]
by Lemma \ref{lem:areaint}. Since $\varphi$, defined in Lemma \ref{lem:areaint}, is decreasing the above expressions are bounded below by 
\eq\label{eq:plbnd}
\frac{1}{\Ar{\oD}} \varphi\left( R(\Gamma(\varrho)\right).
\en


Hence, integrating with respect to the law of $Z(t)$, $t\in [0, \varrho)$, and $\varrho$, we get
\[
\begin{split}
	\Ar{\oD}\E\left( \chi_1 \right) &\ge \E\left[ \int_0^\infty e^{-t} \varphi\left( R(\Gamma(t))\right)dt \right]\\
	&=  \E\left[ \int_0^\infty e^{-\Gamma^{-1}(u)} \varphi\left( R(u)\right)\left( \Gamma^{-1}(u) \right)' du \right], \quad u=\Gamma(t).
\end{split}
\]
From the bounds in \eqref{eq:bndGamma} we get
\[
\frac{1}{2\sigma^2p(1-p)\pi \delta^2}\ge \left(\Gamma^{-1}(u)\right)' \ge \frac{1}{2\sigma^2 p\pi \delta^2}, \quad \text{and}\quad \Gamma^{-1}(u) \le \frac{u}{2\sigma^2 p(1-p)\pi \delta^2}. 
\]
Hence
\[
\begin{split}
	\Ar{\oD}\E\left( \chi_1 \right) &\ge \frac{1}{2\sigma^2p\pi \delta^2} \E\left[ \int_0^\infty e^{-c u}\varphi(R(u)) du \right],\quad c=\frac{1}{2\sigma^2 p(1-p)\pi \delta^2},\\
	&= \frac{1}{2\sigma^2p\pi \delta^2}  \int_0^\infty e^{-c u} \E\left[\varphi(R(u)) \right] du\\
	&= \frac{1}{2\sigma^2p\pi \delta^2}  \int_0^\infty e^{-c u} Q^{2}_{\norm{z_1-z_2}}\left[\varphi(R(u)) \right] du,
\end{split}
\]
where $Q^2_x$ is the law of a two dimensional Bessel process starting from $x$.

Now $\norm{z_1-z_2}\le \sqrt{2}$, the diameter of $D$. It follows from additivity of squared Bessel processes (see \cite{ShigaWata73}) that the law of $R(u)$, under $Q_{\norm{z_1-z_2}}^2$, is stochastically dominated by the law of $R(u)$, under $Q_{\sqrt{2}}^2$. Using the fact that $\varphi$ is decreasing we get 
\[
Q_{\norm{z_1-z_2}}^2\left[\varphi\left( R(u)  \right)\right] \ge Q_{\sqrt{2}}^2\left[\varphi\left( R(u)  \right)\right].
\]
Combining all the bounds and noting that $\Ar{\oD}\le (1+2\delta)^2$, we get
\eq\label{eq:estimatebern}
\begin{split}
	\E\left( \chi_1\right) \ge \frac{1}{2\sigma^2 p \pi \delta^2 (1+2\delta)^2} \int_0^\infty \exp\left( -\frac{u}{2\sigma^2 p(1-p)\pi \delta^2}\right)Q_{\sqrt{2}}^2\left[\varphi\left( R(u)  \right)\right]du.
\end{split}
\en
Notice that this is a lower bound that is independent of the starting position of the diffusion.

To estimate the last expression we express it back in terms of planar Brownian motion. Let $v=(v_1, v_2)\in \rr^2$ be arbitrary and, as before, let $c=\left(2\sigma^2 p(1-p)\pi \delta^2\right)^{-1}$. Let $V=(V_1, V_2)$ be a planar Brownian motion, starting from $v=(v_1, v_2)$. Then 
\[
\begin{split}
	\int_0^\infty &\exp\left( -\frac{u}{2\sigma^2 p(1-p)\pi \delta^2}\right)Q_{\norm{v}}^2\left[\varphi\left( R(u)  \right)\right]du= \int_0^\infty e^{-c u} \E_v\left[  \varphi\left( \norm{V}(u) \right) \right]du.
\end{split}
\] 
The last expression is the $c$ resolvent (sometimes called the $c$ potential) operator for the generator of planar Brownian motion and is known explicitly. See \cite[page 93]{schilling14}:
\[
\begin{split}
	\int_0^\infty e^{-c u} \E_v\left[  \varphi\left( \norm{V}(u) \right) \right]du.&=\frac{1}{\pi} \int_{\rr^2} K_0\left( \sqrt{2c} \norm{y} \right) \varphi(\norm{v-y})dy\\
	&=\frac{1}{\pi} \int_{z:\norm{z}<2\delta} K_0\left( \sqrt{2c} \norm{v-z} \right) \varphi(\norm{z})dz,
\end{split}
\]
where $K_0$ is the modified Bessel function of the second kind. The second equality above is due to the fact that $\varphi(r)=0$ for $r\ge 2\delta$.

Now $K_0$ is a decreasing function of its argument (see \cite[page 374]{AbraSteg}). Thus
\[
\begin{split}
	\frac{1}{\pi} &\int_{z:\norm{z}<2\delta} K_0\left( \sqrt{2c} \norm{v-z} \right) \varphi(\norm{z})dz \ge K_0\left( \sqrt{2c} (\norm{v} + 2\delta) \right)\frac{1}{\pi} \int_{z:\norm{z}<2\delta}  \varphi(\norm{z})dz.
\end{split}
\]
We now put $\norm{v}=\sqrt{2}$ and evaluate the following integral:
\[
\begin{split}
	\frac{1}{\pi}&\int_{z:\norm{z}<2\delta}  \varphi(\norm{z})dz =2 \int_0^{2\delta} r\varphi(r) dr\\
	&=4\delta^2\int_0^{2\delta} r \arccos\left(  \frac{r}{2\delta} \right)dr - \int_0^{2\delta} r^2\sqrt{4\delta^2-r^2}dr\\
	&=16\delta^4\left[ \int_0^{1} s \arccos\left( s \right) ds - \int_0^{1} s^2 \sqrt{1-s^2} ds\right],\; s=\frac{r}{2\delta},\\
	&= 16\delta^4 \left[ \frac{\pi}{8} - \frac{\pi}{16}  \right]= \pi \delta^4.
\end{split}
\]

Combining all the bounds, 
\eq\label{eq:mathfrakp}
\E\left( \chi_1\right) \ge \mathfrak{p} := \frac{\delta^2}{2p\pi\sigma^2 (1+2\delta)^2} K_0\left( \frac{\sqrt{2}+2\delta}{\sigma\delta \sqrt{\pi p (1-p)}}   \right).
\en
Therefore, at $\varrho=t_1$, the probability that we gather cards $1$ and $2$ is at least $\mathfrak{p}$, irrespective of the starting positions of the two cards. This gives an upper bound on $\mathfrak{p}$ and completes the proof of Theorem \ref{thm:mixingtime}.

\comment{
	
	Let $\varepsilon_1$ be a Bernoulli$(\mathfrak{p})$ random variable, constructed on a possibly extended probability space, such that $\chi_1 \ge \varepsilon_1$, almost surely. Recursively, condition on $\sigma\left(\mathcal{F}_{t_{-1}} \cup \sigma(\varepsilon_1)\right)$. Let $\varrho_i= t_i - t_{i-1}$, for $i=1,2,\ldots$, (including $\varrho_1=t_1$) and let $\varepsilon_i$ denote the corresponding Bernoulli$(\mathfrak{p})$ random variable.  
	By the strong Markov property, we can ensure that the sequence $\left( \varrho_i, \varepsilon_i \right)$, $i\in \NN$, is jointly independent. However, note that $\varepsilon_i$ and $\varrho_i$ are not independent of one another. All we know is that each $\varrho_i$ is exponential with rate one and each $\varepsilon_i$ is Bernoulli$(\mathfrak{p})$
	
	\begin{lemma}\label{lem:expsubs}
		Suppose $(\varrho_i, \varepsilon_i),\; i \in \NN$, is an independent sequence of pairs of random variables defined on a probability space where each $\varrho_i$ is exponential with rate one and each $\varepsilon_i$ is Bernoulli with expectation $\mathfrak{p}$. Let $J$ be the first $i$ such that $\varepsilon_i=1$. Then 
		\[
		\E\left( \varrho_1 + \ldots + \varrho_J   \right)=\frac{1}{\mathfrak{p}}.
		\] 
		Moreover, $\zeta^*:=\varrho_1+\ldots + \varrho_J$ has finite exponential moments in some neighborhood of zero. In fact, $\E e^{\alpha \zeta^*} < \infty$ for all $\alpha< \mathfrak{p}$. 
	\end{lemma}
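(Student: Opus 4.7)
The backbone of the argument is that, while $\varrho_i$ and $\varepsilon_i$ may be correlated within each pair, the sequence of pairs $(\varrho_i,\varepsilon_i)$, $i\in \NN$, is jointly independent. The key consequence is that the event $\{J\ge i\}=\bigcap_{k<i}\{\varepsilon_k=0\}$ is measurable with respect to $\sigma\bigl((\varrho_k,\varepsilon_k):k<i\bigr)$ and is therefore independent of the pair $(\varrho_i,\varepsilon_i)$. Both claims in the lemma will follow by reducing to geometric-series sums in which this independence is applied termwise.

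For the mean I plan to write $\zeta^*=\sum_{i=1}^{\infty}\varrho_i\,\mathbf{1}\{J\ge i\}$, which is a sum of nonnegative terms so Tonelli applies. Using the independence of $\varrho_i$ and $\{J\ge i\}$ observed above, together with the fact that $J$ is geometric with success probability $\mathfrak p$ so that $P(J\ge i)=(1-\mathfrak p)^{i-1}$, I get
\[
\E\zeta^*=\sum_{i=1}^{\infty}\E[\varrho_i]\,P(J\ge i)=\sum_{i=1}^{\infty}(1-\mathfrak p)^{i-1}=\frac{1}{\mathfrak p}.
\]

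For the exponential moments I will decompose according to $\{J=k\}$ and use independence of the pairs to factor:
\[
\E e^{\alpha\zeta^*}=\sum_{k=1}^{\infty}\E\!\left[e^{\alpha(\varrho_1+\cdots+\varrho_k)}\mathbf{1}\{\varepsilon_1=0,\ldots,\varepsilon_{k-1}=0,\varepsilon_k=1\}\right]=\sum_{k=1}^{\infty}a(\alpha)^{k-1}b(\alpha),
\]
where $a(\alpha)=\E[e^{\alpha\varrho_1}\mathbf{1}\{\varepsilon_1=0\}]$ and $b(\alpha)=\E[e^{\alpha\varrho_1}\mathbf{1}\{\varepsilon_1=1\}]$. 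Summing the geometric series yields $\E e^{\alpha\zeta^*}=b(\alpha)/\bigl(1-a(\alpha)\bigr)$ whenever $a(\alpha)<1$. At $\alpha=0$ we have $a(0)=P(\varepsilon_1=0)=1-\mathfrak p<1$, and $\alpha\mapsto a(\alpha)$ is continuous (in fact convex) on $\{\alpha<1\}$ by dominated convergence, so there is a right neighborhood of $0$ on which $a(\alpha)<1$ and the MGF is finite.

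To pin down the bound $\alpha<\mathfrak p$ stated in the lemma, I will carry out the computation in the benchmark case where $\varrho_i\perp\varepsilon_i$; there $a(\alpha)=(1-\mathfrak p)/(1-\alpha)$ and $b(\alpha)=\mathfrak p/(1-\alpha)$, yielding $\E e^{\alpha\zeta^*}=\mathfrak p/(\mathfrak p-\alpha)$, i.e.\ $\zeta^*$ is Exp$(\mathfrak p)$ and the condition $a(\alpha)<1$ becomes exactly $\alpha<\mathfrak p$. This is also the intuitively correct scale, since $\E\zeta^*=1/\mathfrak p$. The only place this plan is delicate is the last step: for a general within-pair coupling the condition $a(\alpha)<1$ need not hold all the way up to $\alpha=\mathfrak p$, so if a sharp bound is needed for arbitrary joint laws one would have to invoke a Hoeffding--Fr\'echet/comonotone-worst-case estimate (for instance Cauchy--Schwarz gives $a(\alpha)\le\sqrt{(1-\mathfrak p)/(1-2\alpha)}$, hence finiteness at least for $\alpha<\mathfrak p/2$); for the Markov-style bound that Theorem~\ref{thm:mixdiscrete} ultimately uses, any positive $\alpha$ with $\E e^{\alpha\zeta^*}<\infty$ is enough.
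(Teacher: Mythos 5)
Your computation of the mean is exactly the paper's: split $\zeta^*=\sum_i\varrho_i\mathbf{1}\{J\ge i\}$, use that $\{J\ge i\}$ is $\sigma\bigl((\varrho_k,\varepsilon_k):k<i\bigr)$-measurable and hence independent of $(\varrho_i,\varepsilon_i)$, and sum the geometric tail to get $\E(J)=1/\mathfrak p$.

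For the exponential moment your route is genuinely different and, in fact, more careful than the paper's. The paper decomposes $\E e^{\alpha\zeta^*}=\sum_j\E\bigl(e^{\alpha(\varrho_1+\cdots+\varrho_j)}\mathbf{1}\{J=j\}\bigr)$ and then applies Cauchy--Schwarz to each term. Done correctly this produces $\sqrt{\E e^{2\alpha(\varrho_1+\cdots+\varrho_j)}}\sqrt{\Prob(J=j)}$, and summing the resulting geometric series converges precisely when $\alpha<\mathfrak p/2$ --- the same threshold you obtain from your Cauchy--Schwarz bound $a(\alpha)\le\sqrt{(1-\mathfrak p)/(1-2\alpha)}$. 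The paper's display drops the factor $2$ in the exponent, which is what lets it reach the advertised $\alpha<\mathfrak p$; that step is not Cauchy--Schwarz as written.

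Your caution about the sharp constant is well founded: the ``$\E e^{\alpha\zeta^*}<\infty$ for all $\alpha<\mathfrak p$'' clause is actually false for general within-pair dependence, and your geometric-series formulation makes this visible. Taking the comonotone coupling $\{\varepsilon_1=0\}=\{\varrho_1>-\log(1-\mathfrak p)\}$ gives $a(\alpha)=(1-\mathfrak p)^{1-\alpha}/(1-\alpha)$, and one checks $a(\mathfrak p)=(1-\mathfrak p)^{-\mathfrak p}>1$, so by continuity $a(\alpha)>1$ for some $\alpha<\mathfrak p$, whence the series $\sum_k a(\alpha)^{k-1}b(\alpha)$ diverges and $\E e^{\alpha\zeta^*}=\infty$. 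Your $a$--$b$ factorization cleanly isolates what is actually true: $a(0)=1-\mathfrak p<1$, $a$ is continuous, so the MGF is finite in a neighborhood of $0$, and Cauchy--Schwarz pushes this to $\alpha<\mathfrak p/2$ uniformly over couplings. As you correctly note, the downstream application in Theorem~\ref{thm:mixingtime} only uses finiteness of the MGF near $0$ (to get a finite variance for Chebyshev), so the weaker, correct conclusion suffices.
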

	
	\begin{proof}
		Let $\mathfrak{q}=1-\mathfrak{p}$. Obviously, $\E\left( \varrho_1 + \ldots + \varrho_J   \right)= \sum_{i\in \NN} \E\left( \varrho_i 1\{ J \ge i \}  \right)$.
		Note that the event $\{J\ge i\}=\{ J\le i-1  \}^c$ is measurable with respect to the random variables $\left( \varrho_l, \varepsilon_l \right)$, $1\le l \le i-1$, which are independent on $\left( \varrho_i, \varepsilon_i  \right)$. Thus $\E\left( \varrho_i 1\{ J \ge i \}  \right)= \E(\varrho_i) \Prob\left(  J\ge i \right)=  \Prob\left(  J\ge i \right)$. Adding over $i$ gives
		\[
		\E\left( \varrho_1 + \ldots + \varrho_J   \right)= \sum_{i\in \NN}  \Prob\left(  J\ge i \right)= \E(J)=\frac{1}{\mathfrak{p}}.
		\]
		\medskip
		
		To see that $\E e^{\alpha \zeta^*} < \infty$ for all $\alpha< \mathfrak{p}$, use H\"older's inequality:
		\[
		\begin{split}
			\E e^{\alpha \zeta^*} &= \sum_{j=1}^\infty \E\left( e^{\alpha \zeta^*} ,\; J=j   \right)= \sum_{j=1}^\infty \E\left( e^{\alpha \zeta^*},\; J=j   \right)\\
			&\le \sum_{j=1}^\infty \sqrt{\E e^{\alpha\left( \varrho_1+ \ldots + \varrho_j\right)} \Prob(J=j)} = \sqrt{\mathfrak{p}} \sum_{j=1}^\infty  \mathfrak{q}^{(j-1)/2} \sqrt{\E e^{\alpha\left( \varrho_1+ \ldots + \varrho_j\right)}}.
		\end{split}
		\]
		Since $\varrho_1+ \ldots + \varrho_j$ is gamma$(j,1)$, it follows that $\E e^{\alpha\left( \varrho_1+ \ldots + \varrho_j\right)}=(1-\alpha)^{-j}$.
		Thus
		\eq\label{eq:expmomest}
		\E e^{\alpha \zeta^*} \le  \sqrt{\mathfrak{p}/\mathfrak{q}} \sum_{j=1}^\infty  \left(\frac{\mathfrak{q}}{1-\alpha}\right)^{j/2}=\sqrt{\mathfrak{p}/\mathfrak{q}} \left( 1- \sqrt{ \frac{\mathfrak{q} }{1-\alpha}}\right)^{-1},
		\en
		which is finite whenever $\alpha < 1- \mathfrak{q}=\mathfrak{p}$. 
	\end{proof}

	\begin{lemma}\label{lem:bndtau12}
		$\overline{\tau}_{12}$ is stochastically bounded by $\zeta^*$ from Lemma \ref{lem:expsubs} and has a finite mean.
	\end{lemma}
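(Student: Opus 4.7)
The plan is to tie together the coupling and lower-bound arguments developed in the paragraphs preceding the lemma. The key structural fact is that whenever $\varepsilon_i = 1$ we must have $\chi_i = 1$ by construction, and $\chi_i = 1$ means that the uniformly chosen gather point $w_i$ lies in $\{Z_1(t_i-)+U\}\cap\{Z_2(t_i-)+U\}$, so both cards are gathered to $w_i$ and coincide at time $t_i$. Consequently, on the event $\{J = j\}$ we have $\overline{\tau}_{12} \le t_j = \varrho_1 + \cdots + \varrho_j$, and so $\overline{\tau}_{12} \le \zeta^*$ pathwise on the enlarged probability space.

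First I would spell out the strong Markov step. At each jump time $t_{i-1}$, conditioning on $\mathcal{F}_{t_{i-1}}$, the post-$t_{i-1}$ evolution of the jump-diffusion together with the next Poisson arrival time and mark $(\varrho_i, w_i)$ has the same law as the process started fresh from the configuration $(Z_1(t_{i-1}), Z_2(t_{i-1}))$. The lower bound \eqref{eq:mathfrakp} on $\E(\chi_1)$ was derived \emph{uniformly} in the initial positions, so $\Prob(\chi_i = 1 \mid \mathcal{F}_{t_{i-1}}) \ge \mathfrak{p}$ almost surely. This uniform bound is exactly what lets us construct, on a possibly enlarged probability space, an $\{0,1\}$-valued $\varepsilon_i \le \chi_i$ with $\Prob(\varepsilon_i = 1 \mid \mathcal{F}_{t_{i-1}}) = \mathfrak{p}$, and, by iterating the construction over $i$, arrange the sequence $(\varrho_i,\varepsilon_i)_{i \in \NN}$ to be independent with the marginals stated in Lemma \ref{lem:expsubs}.

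Combining these pieces, $\overline{\tau}_{12} \le t_J = \zeta^*$ almost surely, which gives the stochastic domination $\overline{\tau}_{12} \preceq \zeta^*$. The finite-mean conclusion then follows directly from Lemma \ref{lem:expsubs}, which yields $\E(\overline{\tau}_{12}) \le \E(\zeta^*) = 1/\mathfrak{p} < \infty$.

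The main subtlety is bookkeeping in the coupling step: one must ensure that the $\varepsilon_i$'s can be chosen so that the pairs $(\varrho_i,\varepsilon_i)$ are jointly independent (even though $\varepsilon_i$ and $\varrho_i$ need not be independent of each other within the same $i$). This is handled by treating each inter-jump block $[t_{i-1}, t_i)$ as an independent trial via the strong Markov property at the stopping time $t_{i-1}$, and noting that the uniformity in the initial condition of the lower bound $\mathfrak{p}$ is precisely what decouples the success probability of $\varepsilon_i$ from the history $\mathcal{F}_{t_{i-1}}$. Beyond that, the argument is essentially a bookkeeping exercise combining \eqref{eq:mathfrakp} with the strong Markov property and Lemma \ref{lem:expsubs}.
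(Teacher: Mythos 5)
Your proposal is correct and follows the same route as the paper: the pathwise coupling $\varepsilon_i \le \chi_1(i)$ forces $J_0 \le J$, so $\overline{\tau}_{12} \le \varrho_1 + \cdots + \varrho_{J_0} \le \varrho_1 + \cdots + \varrho_J = \zeta^*$ pathwise, and the finite mean then comes from Lemma~\ref{lem:expsubs}. The strong-Markov and uniformity-in-initial-condition points you raise are exactly the ones the paper establishes in the paragraphs immediately preceding the lemma, so you are re-deriving rather than adding to the argument.
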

	
	\begin{proof}
		As mentioned in the outline of the strategy, $\overline{\tau}_{12}\le \sum_{i=1}^{J_0} \varrho_i$, where $J_0$ be the first $i$ such that $\{\chi_1(i) = 1\}$. By our coupling, if $J$ is the first $i$ such that $\{\varepsilon_i=1\}$, then $\overline{\tau}_{12}$ is also no larger than $\sum_{i=1}^J \varrho_i$. The latter is the random variable $\zeta^*$ from Lemma \ref{lem:expsubs} which has been shown to have a finite mean.
	\end{proof}
	
}

\section*{Acknowledgements}
It is our pleasure to thank Lauren Bandklayder, Marc Coram, Emanuelle Gouillart, Kimberly Kinateder, Mark Perlman, and Graham White for their help with this project over the years. Many thanks to Keith Fife, Yuqi Huang, and Max Goering at the University of Washington for their help with the simulations. 
The first author was supported in part by NSF grant DMS-1208775.
The second author was supported in part by NSF grants DMS-1308340 and DMS-1612483.

\bibliographystyle{plain} 
\bibliography{smooshing.bib}

\begin{thebibliography}{10}

\bibitem{AbraSteg}
Milton Abramowitz and Irene~A. Stegun.
\newblock {\em Handbook of mathematical functions with formulas, graphs, and
  mathematical tables}, volume~55 of {\em National Bureau of Standards Applied
  Mathematics Series}.
\newblock For sale by the Superintendent of Documents, U.S. Government Printing
  Office, Washington, D.C., 1964.

\bibitem{AlonKozma}
Gil Alon and Gady Kozma.
\newblock The probability of long cycles in interchange processes.
\newblock {\em Duke Math. J.}, 162(9):1567--1585, 06 2013.

\bibitem{Aref}
H.~Aref.
\newblock Stirring by chaotic advection.
\newblock {\em J. Fluid Mech.}, 143:1--21, 1984.

\bibitem{BaxHarris86}
P.~Baxendale and T.~E. Harris.
\newblock Isotropic stochastic flows.
\newblock {\em The Annals of Probability}, 14:1155--1179, 1986.

\bibitem{BeresKozma}
N.~Berestycki and G.~Kozma.
\newblock Cycle structure of the interchange process and representation theory.
\newblock {\em Bull. Soc. Math. France}, 143.2:265--280, 2015.

\bibitem{CutoffRandom}
M.~Bernstein and E.~Nestoridi.
\newblock Cutoff for random to random card shuffle.
\newblock {\em The Annals of Probability}, 47(5), 2017.

\bibitem{meteor15}
S.~Billey, K.~Burdzy, S.~Pal, and B.~E. Sagan.
\newblock On meteors, earthworms, and {WIMP}s.
\newblock {\em The Annals of Applied Probability}, 25(4):1729--1779, 2015.

\bibitem{Billingsley}
Patrick Billingsley.
\newblock {\em Convergence of Probability Measures, Second Edition}.
\newblock John Wiley \& Sons, Inc., 1999.

\bibitem{DiaEvansGraham}
Persi Diaconis, Steven~N. Evans, and Ron Graham.
\newblock Unseparated pairs and fixed points in random permutations.
\newblock {\em Advances in Applied Mathematics}, 61:102 -- 124, 2014.

\bibitem{DSC93}
Persi Diaconis and Laurent Saloff-Coste.
\newblock Comparison techniques for random walk on finite groups.
\newblock {\em Ann. Probab.}, 21(4):2131--2156, 10 1993.

\bibitem{DiekerSaliola}
A.~B. Dieker and F.~V. Saliola.
\newblock Spectra analysis of random-to-random markov chains.
\newblock {\em Advances in Mathematics}, 323:427--485, 2018.

\bibitem{wxml}
Keith Fife and Yuqi Huang.
\newblock Randomly mixing fluids.
\newblock Final report. Washington Experimental Math Lab. Available at
  \texttt{depts.washington.edu/uwmxl/wordpress/wp-content/uploads/2017/08/fluidsfinal.pdf},
  2018.

\bibitem{GouillartB}
E.~Gouillart, O.~Dauchot, B.~Dubrulle, S.~Roux, and J.-L. Thiffeault.
\newblock Slow decay of concentration variance due to no-slip walls in chaotic
  mixing.
\newblock {\em Phys. Rev. E}, 78:026211, Aug 2008.

\bibitem{GouillartC}
E.~Gouillart, N.~Kuncio, O.~Dauchot, B.~Dubrulle, S.~Roux, and J.-L.
  Thiffeault.
\newblock Walls inhibit chaotic mixing.
\newblock {\em Phys. Rev. Lett.}, 99:114501, Sep 2007.

\bibitem{GouillartThesis}
Emmanuelle Gouillart.
\newblock {\em {Chaotic mixing by rod-stirring devices in open and closed
  flows}}.
\newblock Theses, {Universit{\'e} Pierre et Marie Curie - Paris VI}, October
  2007.

\bibitem{Harris81}
T.~E. Harris.
\newblock Brownian motions on the homeomorphisms of the plane.
\newblock {\em The Annals of Probability}, 9(2):232--254, 1981.

\bibitem{HJ94}
R.~A. Horn and C.~R. Johnson.
\newblock {\em Topics in Matrix Analysis}.
\newblock Cambridge University Press, 1994.

\bibitem{HowWarren09}
Chris Howitt and Jon Warren.
\newblock Consistent families of {B}rownian motions and stochastic flows of
  kernels.
\newblock {\em The Annals of Probability}, 37(4):1237--1272, 2009.

\bibitem{JS}
J.~Jacod and A.~N. Shiryaev.
\newblock {\em Limit Theorems for Stochastic Processes. Second Edition.},
  volume 288 of {\em A Series of Comprehensive Studies in Mathematics}.
\newblock Springer, 2002.

\bibitem{KS91}
Ioannis Karatzas and Steven~E. Shreve.
\newblock {\em Brownian motion and stochastic calculus}, volume 113 of {\em
  Graduate Texts in Mathematics}.
\newblock Springer-Verlag, New York, second edition, 1991.

\bibitem{Kruk07}
Lukasz Kruk, John Lehoczky, Kavita Ramanan, and Steven Shreve.
\newblock An explicit formula for the {S}korokhod map on [0, a].
\newblock {\em Ann. Probab.}, 35(5):1740--1768, 09 2007.

\bibitem{Lacoin}
Hubert Lacoin.
\newblock Mixing time and cutoff for the adjacent transposition shuffle and the
  simple exclusion.
\newblock {\em Ann. Probab.}, 44(2):1426--1487, 03 2016.

\bibitem{LeJan85}
Y.~Le~Jan.
\newblock On isotropic {B}rownian motions.
\newblock {\em Zeitschrift f\"ur Wahrscheinlichkeitstheorie und Verwandte
  Gebiete}, 70:609--620, 1985.

\bibitem{LejanRaimond04}
Y.~Le~Jan and O.~Raimond.
\newblock Flows, coalescence and noise.
\newblock {\em The Annals of Probability}, 32:1247--1315, 2004.

\bibitem{LPW}
D.A. Levin and Y.~Peres.
\newblock {\em Markov Chains and Mixing Times. Second Edition.}
\newblock American Mathematical Society, 2017.

\bibitem{ngthesis}
L.L. Ng.
\newblock {\em Heisenberg Model, Bethe Ansatz, and Random Walks}.
\newblock Harvard University, 1996.
\newblock Bachelor's thesis.

\bibitem{paulhandbook}
E.L. Paul, V.~Atiemo-Obeng, and S.M. Kresta.
\newblock {\em Handbook of Industrial Mixing: Science and Practice}.
\newblock Number v. 1 in Handbook of Industrial Mixing: Science and Practice.
  John Wiley \& Sons, 2003.

\bibitem{Pemantle94}
Robin Pemantle.
\newblock A shuffle that mixes sets of any fixed size much faster than it mixes
  the whole deck.
\newblock {\em Random Structures \& Algorithms}, 5(5):609--626, 1994.

\bibitem{Poincare}
Henri Poincar{\'e}.
\newblock {\em Calcul des probabilit{\'e}s}.
\newblock Gauthier-Villars, Paris, 1912.

\bibitem{Ramanan06}
K.~Ramanan.
\newblock Reflected diffusions defined via the extended {S}korokhod map.
\newblock {\em EJP}, 11(36):934--992, 2006.

\bibitem{Saisho}
Yasumasa Saisho.
\newblock Stochastic differential equations for multi-dimensional domain with
  reflecting boundary.
\newblock {\em Probability Theory and Related Fields}, 74(3):455--477, Sep
  1987.

\bibitem{SC01}
Laurent Saloff-Coste.
\newblock Probability on groups: random walks and invariant diffusions.
\newblock {\em Notices Amer. Math. Soc.}, 48:968D977, 2001.

\bibitem{SC04}
Laurent Saloff-Coste.
\newblock Random walks on finite groups.
\newblock In Harry Kesten, editor, {\em Probability on Discrete Structures},
  pages 263--346. Springer Berlin Heidelberg, Berlin, Heidelberg, 2004.

\bibitem{schilling14}
R.L. Schilling, L.~Partzsch, and B.~B{\"o}ttcher.
\newblock {\em Brownian Motion: An Introduction to Stochastic Processes}.
\newblock De Gruyter Textbook. De Gruyter, 2014.

\bibitem{ShigaWata73}
Tokuzo Shiga and Shinzo Watanabe.
\newblock {Bessel} diffusions as a one-parameter family of diffusion processes.
\newblock {\em Z. Wahrscheinlichkeitstheorie verw. Gebiete}, 27(1):37--46,
  1973.

\bibitem{sturman06}
R.~Sturman, J.M. Ottino, and S.~Wiggins.
\newblock {\em The Mathematical Foundations of Mixing: The Linked Twist Map as
  a Paradigm in Applications: Micro to Macro, Fluids to Solids}.
\newblock Cambridge Monographs on Applied and Computational Mathematics.
  Cambridge University Press, 2006.

\bibitem{Tanaka79}
H.~Tanaka.
\newblock Stochastic differential equations with reflecting boundary conditions
  in convex regions.
\newblock {\em Hiroshima Math. J.}, 9:163--177, 1979.

\bibitem{Wilson04}
David~Bruce Wilson.
\newblock Mixing times of lozenge tiling and card shuffling {M}arkov chains.
\newblock {\em Ann. Appl. Probab.}, 14(1):274--325, 02 2004.

\end{thebibliography}

\end{document}